\documentclass[11pt]{amsart}

\usepackage[margin=0.95in]{geometry}
\usepackage{amsmath, color}
\usepackage{amssymb}
\usepackage{amsthm}
\usepackage{algpseudocode}
\usepackage{enumitem}
\usepackage{mdwlist}
\usepackage{tikz}
\usepackage{tikz-cd}

\theoremstyle{plain}
\newtheorem{theorem}{Theorem}[section]

\newtheorem{lemma}[theorem]{Lemma}
\newtheorem{corollary}[theorem]{Corollary}
\newtheorem{problem}[theorem]{Problem}
\newtheorem{conjecture}[theorem]{Conjecture}

\theoremstyle{definition}

\newtheorem{remark}[theorem]{Remark}

\DeclareMathOperator{\Ast}{Ast}
\DeclareMathOperator{\Ker}{Ker}
\DeclareMathOperator{\Img}{Im}
\DeclareMathOperator{\Lk}{Lk}
\DeclareMathOperator{\St}{St}

\DeclareMathOperator{\Facet}{Facet}

\newenvironment{enum*}
{\parskip=-4pt
\begin{enumerate*}

}
{\end{enumerate*}}

\begin{document}
\title{Flag complexes and homology}
\author{Kai Fong Ernest Chong}
\address{Singapore University of Technology and Design\\
      Singapore}
\email{ernest\_chong@sutd.edu.sg}

\author{Eran Nevo}
\address{Einstein Institute of Mathematics, Hebrew University of Jerusalem\\ Israel}
\email{nevo@math.huji.ac.il}
\thanks{Both authors were partially supported by the ISF--NRF Israel--Singapore joint research program, grant 2528/16.
E. Nevo also acknowledges support by the Israel Science Foundation grant ISF-1695/15 and by ISF--BSF joint grant 2016288.}

\keywords{flag complex, clique complex, homology, $f$-vector, Betti number, Tur\'{a}n graph}
\subjclass[2010]{Primary: 05E45, Secondary: 05C69, 05C65, 05C15.}

\begin{abstract}
We prove several relations on the $f$-vectors and Betti numbers of flag complexes. For every flag complex $\Delta$, we show that there exists a balanced complex with the same $f$-vector as $\Delta$, and whose top-dimensional Betti number is at least that of $\Delta$, thereby extending a theorem of Frohmader by additionally taking homology into consideration. We obtain upper bounds on the top-dimensional Betti number of $\Delta$ in terms of its face numbers. We also give a quantitative refinement of a theorem of Meshulam by establishing lower bounds on the $f$-vector of $\Delta$, in terms of the top-dimensional Betti number of $\Delta$. This result has a continuous analog: If $\Delta$ is a $(d-1)$-dimensional flag complex whose $(d-1)$-th reduced homology group has dimension $a\geq 0$ (over some field), then the $f$-polynomial of $\Delta$ satisfies the coefficient-wise inequality $f_{\Delta}(x) \geq (1 + (\sqrt[d]{a}+1)x)^d$.
\end{abstract}

\maketitle

\section{Introduction and Overview}\label{sec:Intro}
Flag complexes are (abstract) simplicial complexes $\Delta$ satisfying the property that every set of vertices of $\Delta$ that are pairwise connected by edges forms a face of $\Delta$. Barycentric subdivisions of polytopes, order complexes of posets, and Whitney triangulations of closed $2$-manifolds are some well-known examples of flag complexes. In graph theory, flag complexes are commonly known as clique complexes or independence complexes, since a flag complex can also be defined as the simplicial complex formed by the cliques in a graph, or equivalently by the independent sets in the corresponding complement graph. Consequently, some of the earliest results in extremal graph theory, such as Zykov's generalization~\cite{Zykov1949} of the celebrated Tur\'{a}n's theorem~\cite{Turan1941} in the 1940s, and works by Erd\H{o}s and his collaborators in the 1960s, e.g.~\cite{Erdos1962:NumberOfCompleteSubgraphsInCertainGraphs}, already give non-trivial bounds on the face numbers of flag complexes.

More recently, Frohmader~\cite{Frohmader2008:FaceVectorsOfFlagComplexes} proved that the $f$-vector of every flag complex is the $f$-vector of a balanced complex, thereby verifying a conjecture made independently by Kalai~\cite[p. 100]{book:StanleyGreenBook} and
Eckhoff~\cite{Eckhoff2004:NewTuranTypeTheoremForCliquesInGraphs}.
Recall that the \textit{$f$-vector} of a $(d-1)$-dimensional simplicial complex $\Delta$ is the vector $f(\Delta) = (f_{-1}(\Delta), f_0(\Delta), \dots, f_{d-1}(\Delta))$, where each $f_i(\Delta)$ is the number of $i$-dimensional faces in $\Delta$. (Note that $f_{-1}(\Delta) = 1$ for the \textit{empty face} $\emptyset$.) We say that $\Delta$ is \textit{balanced} if its vertices can be colored in $d$ colors such that every face has vertices of distinct colors. Let $\Bbbk$ be any fixed field, let $\widetilde{H}_k(\Delta) = \widetilde{H}_k(\Delta;\Bbbk)$ be the $k$-th reduced homology group of $\Delta$ with coefficients in $\Bbbk$, and let $\beta_k(\Delta) := \dim_{\Bbbk}(\widetilde{H}_k(\Delta))$ denote the $k$-th reduced Betti number of $\Delta$. We strengthen Frohmader's theorem by taking the top-dimensional homology of flag complexes into consideration:

\begin{theorem}\label{thm:FrohmaderWithHomology}
If $\Delta$ is a $d$-dimensional flag complex, then there exists a balanced complex $\Gamma$ with the same $f$-vector as $\Delta$, such that $\beta_d(\Gamma) \geq \beta_d(\Delta)$.
\end{theorem}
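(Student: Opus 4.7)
Since $\dim\Delta = d$ there are no $(d{+}1)$-chains, so
\[
\beta_d(\Delta) \;=\; \dim_{\Bbbk} \ker \partial_d^{\Delta} \;=\; f_d(\Delta) - \operatorname{rank}\bigl(\partial_d^{\Delta}\bigr),
\]
where $\partial_d^{\Delta}\colon C_d(\Delta;\Bbbk)\to C_{d-1}(\Delta;\Bbbk)$ is the top boundary map. Combined with the constraint $f(\Gamma) = f(\Delta)$, the desired inequality $\beta_d(\Gamma)\ge \beta_d(\Delta)$ is equivalent to
\[
\operatorname{rank}\bigl(\partial_d^{\Gamma}\bigr) \;\le\; \operatorname{rank}\bigl(\partial_d^{\Delta}\bigr).
\]
Thus the task becomes the construction of a balanced complex on the prescribed $f$-vector whose top boundary matrix has rank no larger than that of $\Delta$.

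My plan is to revisit Frohmader's original construction and refine it to control this rank. Frohmader produces $\Gamma$ as a colored-shifted balanced complex via an iterative compression algorithm whose elementary moves replace a face by a smaller face in a colored reverse-lexicographic order while preserving the $f$-vector. I would fix a basis $z_1,\dots,z_{\beta_d(\Delta)}$ of $\ker\partial_d^{\Delta}$ at the outset and maintain, throughout the algorithm, a linearly independent collection of $d$-cycles in the current complex. When a compression swaps a top face $F$ for a smaller face $F'$, each tracked cycle $z_j$ carrying a nonzero coefficient on $F$ would be modified by transferring that coefficient to $F'$, together with correction terms along the symmetric difference of $\partial F$ and $\partial F'$; these correction terms live on the other $d$-faces incident to the affected $(d{-}1)$-faces. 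Terminating the algorithm would yield $\beta_d(\Delta)$ independent cycles in $\ker\partial_d^{\Gamma}$, and the bound follows.

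The main obstacle is making this cycle-transfer step rigorous. A priori, an elementary compression may introduce a new $(d{-}1)$-face on $\partial F'$ that is not yet a boundary of any other $d$-simplex in the current complex, in which case the naive modification of $z_j$ fails to be a cycle. Overcoming this will likely require a careful scheduling of the compressions --- compressing faces in a bottom-up order, so that recipient $(d{-}1)$-faces are adequately populated before any top-face swap --- or an interlude of symmetric algebraic shifting, which preserves both the $f$-vector and all Betti numbers, to first reduce to a situation where the cycle structure is itself shifted and the transfer is forced. In either case the technical heart of the proof will be a combinatorial lemma asserting that Frohmader's compression scheme admits an execution along which the rank of the top boundary matrix is nonincreasing, together with a concrete linear-algebraic recipe showing that the transferred chains remain cycles at every step.
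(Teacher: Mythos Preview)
Your plan diverges substantially from the paper's argument, and the gap you yourself flag is real.

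The paper does \emph{not} attempt to track cycles through Frohmader's construction. Instead it treats Frohmader's theorem as a black box for the $f$-vector and proves the Betti inequality separately. The key intermediate result (Theorem~\ref{TopBettiIneqRevlex}) shows, by induction on $d$, that the \emph{minimal} revlex $(d{+}1)$-colorable complex $\Sigma$ with $f_d(\Sigma)=f_d(\Delta)$ already satisfies $\beta_d(\Sigma)\ge\beta_d(\Delta)$. The induction step picks a vertex $v_0$ in the most $d$-faces, peels off the remaining vertices one by one via anti-stars, and uses Mayer--Vietoris (Lemma~\ref{lem:standardMayerViet}) to bound $\beta_d(\Delta)$ by a sum of $\beta_{d-1}$'s of links; the inductive hypothesis replaces each link by a revlex $d$-colorable complex $\Sigma_i$, and these are glued (each coned over a new vertex of a fresh color) into a single $(d{+}1)$-colorable complex whose $\beta_d$ is computed exactly. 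A separate extremal result (Theorem~\ref{thm:RevlexComplexMaximizesBettiNum}) then shows the revlex complex maximizes $\beta_{d-1}$ among balanced complexes with a given $f_{k-1}$. Finally, the revlex balanced complex $\Gamma$ supplied by Frohmader has the \emph{same} set of $d$-faces as $\Sigma$, hence the same $\beta_d$.

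By contrast, your plan hinges on a description of Frohmader's proof as a sequence of elementary ``swap one face for a revlex-smaller face'' compressions along which you would carry $d$-cycles. Frohmader's argument is not of that form: it is a structural induction that builds the balanced complex from scratch rather than by local swaps on $\Delta$, so there is no canonical sequence of single-face replacements to thread cycles through. Even granting such a sequence, the obstacle you identify --- that a swapped-in $d$-face may acquire a $(d{-}1)$-boundary not shared with any other $d$-face, so the corrected chain need not close up --- is genuine, and neither of your proposed remedies is developed. (Ordinary algebraic shifting preserves Betti numbers but destroys flagness and need not land in a balanced complex; colored algebraic shifting, which the paper does use inside Theorem~\ref{thm:RevlexComplexMaximizesBettiNum}, applies only once you already have a balanced complex.) As written, the proposal is a sketch with an acknowledged hole rather than a proof; the paper's Mayer--Vietoris/induction route avoids this hole entirely by never opening Frohmader's black box.
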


The Frankl--F\"{u}redi--Kalai theorem~\cite{FFK1988} gives an explicit numerical characterization of the $f$-vectors of balanced complexes, which can be stated in terms of what are called canonical representations. Thus by Frohmader's theorem~\cite{Frohmader2008:FaceVectorsOfFlagComplexes}, the $f$-vectors of flag complexes must satisfy certain inequalities involving canonical representations; see Section \ref{subsec:TuranComplexes} for details. To define canonical representations, we first need to introduce Tur\'{a}n graphs. A \textit{Tur\'{a}n graph} $T_d(n)$  is the complete $d$-partite graph on $n$ vertices, such that each of the $d$ partition sets has either $\lfloor \frac{n}{d}\rfloor$ or $\lceil \frac{n}{d} \rceil$ vertices. Note that if $n\leq d$, then $T_d(n)$ is the complete graph on $n$ vertices. The clique complex of $T_d(n)$, which we denote by $\Delta(T_d(n))$, is called a \textit{Tur\'{a}n complex}. For each $0\leq k\leq n$, let $\binom{n}{k}_{\!d}$ denote the number of $k$-cliques in $T_d(n)$. In particular, $\binom{n}{0}_{\!d} = 1$ for all $n\geq 0$. For convenience, we extend the definition of $\binom{n}{k}_{\!d}$ by allowing $n,k$ to be arbitrary integers: Define $\binom{n}{k}_{\!d} = 0$ if $k>n$, or $k<0$, or $n<0$.

For all positive integers $N, k, r$ satisfying $r\geq k$, there exist unique integers $s, N_k, N_{k-1}, \dots, N_{k-s}$ such that
\begin{equation}\label{eqn:CanonicalRepDefn}
N = \binom{N_k}{k}_{\!r} + \binom{N_{k-1}}{k-1}_{\!r-1} + \dots + \binom{N_{k-s}}{k-s}_{\!r-s},
\end{equation}
$N_{k-i} - \lfloor\frac{N_{k-i}}{r-i}\rfloor > N_{k-i-1}$ for all $0\leq i < s$, and $N_{k-s} \geq k-s > 0$. Such an expression in \eqref{eqn:CanonicalRepDefn} is called the \textit{$(k,r)$-canonical representation} of $N$; see \cite[Chap. 18]{book:HandbookDCGGoodmanRourke2004} (cf. \cite[Sec. 7]{Chong2015:HilbertFunctionsColoredQuotRings}, \cite[Lem. 3.6]{Frohmader2008:FaceVectorsOfFlagComplexes}).

Using Theorem \ref{thm:FrohmaderWithHomology} we provide an upper bound on the top-dimensional reduced Betti numbers of flag complexes, in terms of canonical representations:

\begin{theorem}\label{thm:FlagTopBettiUpperBound}
Let $\Delta$ be a $(d-1)$-dimensional flag complex such that $f_{k-1}(\Delta) = N > 0$. If $N$ has the $(k,d)$-canonical representation
\[N = \binom{N_{d}}{k}_{\!d} + \binom{N_{d-1}}{k-1}_{\!d-1} + \dots + \binom{N_{d-s}}{k-s}_{\!d-s},\]
then
\begin{equation}\label{eqn:UpperBoundTopBettiFlagComplex}
\beta_{d-1}(\Delta) \leq \binom{N_{d}-d}{d}_{\!d} + \binom{N_{d-1}-(d-1)}{d-1}_{\!d-1} + \dots + \binom{N_{d-s} - (d-s)}{d-s}_{\!d-s}.
\end{equation}
\end{theorem}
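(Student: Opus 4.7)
The plan is first to apply Theorem~\ref{thm:FrohmaderWithHomology} to reduce to the balanced case: it yields a balanced $(d-1)$-dimensional complex $\Gamma$ with the same $f$-vector as $\Delta$ and $\beta_{d-1}(\Gamma)\geq\beta_{d-1}(\Delta)$, so it suffices to prove \eqref{eqn:UpperBoundTopBettiFlagComplex} with $\Gamma$ in place of $\Delta$.

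Next, I would introduce the colored-compressed balanced complex $\Gamma^{\mathrm{can}}$ associated to the $(k,d)$-canonical representation of $N$, whose facets form a colored reverse-lex initial segment in the spirit of Frankl--F\"uredi--Kalai. Structurally, $\Gamma^{\mathrm{can}}$ decomposes into nested Tur\'an blocks, one $\Delta(T_{d-i}(N_{d-i}))$ for each term in the canonical representation of $N$, attached to the preceding blocks along a simplex on previously introduced color classes. Since each Tur\'an block is the $(d-i)$-fold join of its color classes $V_1,\dots,V_{d-i}$, the K\"unneth formula for joins gives
\[
\widetilde{H}_{d-i-1}\bigl(\Delta(T_{d-i}(N_{d-i}))\bigr) \;\cong\; \bigotimes_{j=1}^{d-i}\widetilde{H}_0(V_j),
\]
of dimension $\prod_j(|V_j|-1) = \binom{N_{d-i}-(d-i)}{d-i}_{d-i}$. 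A Mayer--Vietoris computation across the blocks shows that these contributions aggregate to $\beta_{d-1}(\Gamma^{\mathrm{can}})$, which then coincides exactly with the right-hand side of \eqref{eqn:UpperBoundTopBettiFlagComplex}.

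The central step, and the main obstacle, is the comparison $\beta_{d-1}(\Gamma) \leq \beta_{d-1}(\Gamma^{\mathrm{can}})$ for the arbitrary balanced $\Gamma$ produced above. My plan is a colored compression argument: apply a sequence of elementary exchange moves on facets of $\Gamma$, each preserving the $f$-vector (as in Frohmader's proof) while not decreasing the top reduced Betti number, until $\Gamma$ is transformed into $\Gamma^{\mathrm{can}}$. Tracking $\beta_{d-1}$ move-by-move is the delicate part: for $f$-vectors the combinatorics is governed by the canonical-representation inequalities of Frankl--F\"uredi--Kalai, but for top Betti numbers one must argue, for each elementary move $\Gamma\rightsquigarrow\Gamma'$, via the long exact sequence of the pair $(\Gamma,\Gamma')$ or via an explicit injection on top cycles induced by the facet exchange. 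Combining this inequality with the computation of $\beta_{d-1}(\Gamma^{\mathrm{can}})$ from the preceding paragraph yields \eqref{eqn:UpperBoundTopBettiFlagComplex}.
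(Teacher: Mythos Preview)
Your reduction via Theorem~\ref{thm:FrohmaderWithHomology} to the balanced case is correct and matches the paper's first step. The divergence is in how to establish the balanced inequality $\beta_{d-1}(\Gamma)\le\text{RHS of \eqref{eqn:UpperBoundTopBettiFlagComplex}}$, which the paper packages as Theorem~\ref{thm:RevlexComplexMaximizesBettiNum}.

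The paper does \emph{not} track Betti numbers through combinatorial compression. Instead it uses \emph{colored algebraic shifting}: by Murai's theorem one has $\beta_i(\Gamma)\le\beta_i(\widetilde{\Delta}_{\prec}(\Gamma))$ for all $i$, and by Babson--Novik $\widetilde{\Delta}_{\prec}(\Gamma)$ is color-shifted with the same $f$-vector. Once color-shifted, the top Betti number is given by the explicit count of Corollary~\ref{cor:NonpureColorShiftedBalancedTopBetti} (top faces avoiding all first-in-color vertices), and the bound is then a pure counting argument on the auxiliary subcomplex $\widehat{\Delta}$ together with the Frankl--F\"uredi--Kalai inequalities and Lemma~\ref{lem:CanonRep+d}.

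Your central step --- showing that each elementary facet exchange ``as in Frohmader's proof'' does not decrease $\beta_{d-1}$ --- is where the gap lies. You only gesture at the long exact sequence of the pair or an ``explicit injection on top cycles''; neither is supplied, and neither is routine. The long exact sequence of a pair does not by itself give an inequality in the needed direction, and a single facet swap can in principle destroy a top cycle, so an injection on cycles must be constructed, not asserted. This difficulty is exactly why the paper invokes \emph{algebraic} rather than combinatorial shifting: Murai's inequality is the non-obvious homological input that your combinatorial plan replaces with an unproven claim. There is also a mismatch in your target complex: your $\Gamma^{\mathrm{can}}$, built from the $(k,d)$-canonical representation of $N=f_{k-1}(\Delta)$, has the \emph{maximal} $f_{d-1}$ compatible with $f_{k-1}=N$, which in general exceeds $f_{d-1}(\Gamma)$; hence $f$-vector-preserving moves cannot land on $\Gamma^{\mathrm{can}}$, and you are missing the separate Frankl--F\"uredi--Kalai step (Theorem~\ref{thm:FFK} with Lemma~\ref{lem:iteratedDifferentialOperatorSecond}) that bridges $f_{k-1}$ to $f_{d-1}$.
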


Meshulam~\cite[Thm. 1.1]{Meshulam2003:DominationHomology} proved that if $\Delta$ is a flag complex such that $\beta_{k-1}(\Delta) \neq 0$, then the $f$-vector of $\Delta$ satisfies $f_{i-1}(\Delta) \geq 2^i\binom{k}{i} = \binom{2k}{i}_{\!k}$ for all $i\geq 0$. As a consequence of Theorem~\ref{thm:FlagTopBettiUpperBound}, we give a quantitative refinement of Meshulam's theorem in the top-dimensional homology case, i.e. when $k-1=\dim \Delta$:

\begin{theorem}\label{thm:FlagLowerBoundfVecGivenTopBetti}
Let $\Delta$ be a $(d-1)$-dimensional flag complex with $\beta_{d-1}(\Delta) = a > 0$. If
\begin{equation}\label{eqn:TopBettiFlagCanonicalRep}
a = \binom{a_d}{d}_{\!d} + \binom{a_{d-1}}{d-1}_{\!d-1} + \dots + \binom{a_{d-s}}{d-s}_{\!d-s}
\end{equation}
is the $(d,d)$-canonical representation of $a$, then
\begin{equation}\label{eqn:fVecFlagLowerBound}
f_{i-1}(\Delta) \geq \binom{a_d+d}{i}_{\!d} + \binom{a_{d-1}+d-1}{i-1}_{\!d-1} + \dots + \binom{a_{d-s}+d-s}{i-s}_{\!d-s}
\end{equation}
for all $i\geq 0$. Furthermore, if equality holds in \eqref{eqn:fVecFlagLowerBound} for some $i=k$ satisfying $k\geq s+1$, then equality must hold in \eqref{eqn:fVecFlagLowerBound} for all $i\geq k$.
\end{theorem}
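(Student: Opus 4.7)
The main inequality \eqref{eqn:fVecFlagLowerBound} will follow from Theorem~\ref{thm:FlagTopBettiUpperBound} by a careful comparison of canonical representations, while the equality-propagation clause will follow from Theorem~\ref{thm:FrohmaderWithHomology} together with the Frankl--F\"{u}redi--Kalai characterization of balanced $f$-vectors~\cite{FFK1988}.

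\emph{Setup.} Fix $i = k$ with $k \geq s + 1$, and set
\[
N^*_k := \binom{a_d + d}{k}_{\!d} + \binom{a_{d-1} + d - 1}{k - 1}_{\!d-1} + \cdots + \binom{a_{d-s} + d - s}{k - s}_{\!d-s}.
\]
Writing $N^*_{d-j} := a_{d-j} + (d - j)$, my first task is to verify that this expression is the $(k,d)$-canonical representation of $N^*_k$. The identity $\lfloor (m + n)/n \rfloor = \lfloor m/n \rfloor + 1$ lifts the canonical-rep condition $a_{d-j} - \lfloor a_{d-j}/(d-j) \rfloor > a_{d-j-1}$ of $a$ to the analogous condition $N^*_{d-j} - \lfloor N^*_{d-j}/(d-j) \rfloor > N^*_{d-j-1}$, and $N^*_{d-s} \geq k - s > 0$ follows from $a_{d-s} \geq d - s$, $k \leq d$, and $k \geq s + 1$.

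\emph{Reducing to Theorem~\ref{thm:FlagTopBettiUpperBound}.} Let $B_{k,d}(N)$ denote the RHS of \eqref{eqn:UpperBoundTopBettiFlagComplex}; applied with $N := f_{k-1}(\Delta)$, Theorem~\ref{thm:FlagTopBettiUpperBound} gives $a = \beta_{d-1}(\Delta) \leq B_{k,d}(N)$. A direct substitution shows $B_{k,d}(N^*_k) = \sum_{j=0}^s \binom{a_{d-j}}{d-j}_{\!d-j} = a$. To deduce $N \geq N^*_k$, I would prove the key lemma: \textit{if $N < N^*_k$, then $B_{k,d}(N) < a$.} I would compare the $(k,d)$-canonical representations of $N$ and $N^*_k$ term by term in lex order on the numerators. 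If the leading numerator $M_d$ of $N$'s representation satisfies $M_d < a_d + d$, then $\binom{M_d - d}{d}_{\!d} < \binom{a_d}{d}_{\!d}$, and the canonical-rep constraints $M_{d-j} - \lfloor M_{d-j}/(d-j)\rfloor > M_{d-j-1}$ bound the trailing contribution to $B_{k,d}(N)$ tightly enough to preserve the strict inequality. If $M_d = a_d + d$, one recurses on the next canonical-rep term.

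\emph{Equality case.} Assume equality in \eqref{eqn:fVecFlagLowerBound} for some $k \geq s + 1$. By Theorem~\ref{thm:FrohmaderWithHomology}, there exists a balanced complex $\Gamma$ with $f(\Gamma) = f(\Delta)$. Since $f_{k-1}(\Gamma) = N^*_k$ has the $(k,d)$-canonical representation computed in the setup, iterating the Frankl--F\"{u}redi--Kalai upper bound on $\Gamma$ yields
\[
f_{i-1}(\Delta) = f_{i-1}(\Gamma) \leq \sum_{j=0}^s \binom{a_{d-j} + d - j}{i - j}_{\!d-j} = N^*_i
\]
for every $i \geq k$. Combined with the lower bound $f_{i-1}(\Delta) \geq N^*_i$ from the main inequality, equality holds for all $i \geq k$.

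\emph{Main obstacle.} The chief technical point is the key lemma above. The function $B_{k,d}$ is only weakly monotone---for instance $B_{2,2}(4) = B_{2,2}(5) = 1$---so $B_{k,d}(N) \geq B_{k,d}(N^*_k)$ does not directly yield $N \geq N^*_k$. The termwise comparison must carefully exploit the canonical-rep inequalities $M_{d-j} - \lfloor M_{d-j}/(d-j) \rfloor > M_{d-j-1}$ and $M_{d-s} \geq k - s$ to rule out compensating growth in the trailing summands. A secondary subtlety is the case $i \leq s$, where \eqref{eqn:fVecFlagLowerBound} contains summands with non-positive indices (vanishing by convention); these cases should be tractable by a direct argument, by induction on $d$ via vertex links, or by invoking Meshulam's theorem for the small-$i$ cases.
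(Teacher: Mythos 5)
Your overall strategy is recognizably close to the paper's---both routes pass through Theorem~\ref{thm:FlagTopBettiUpperBound}, isolate a canonical-representation comparison as the crux, and handle the equality clause via Frohmader~$+$~Frankl--F\"{u}redi--Kalai. The equality argument you give is essentially the paper's (invoking Theorem~\ref{thm:FrohmaderWithHomology} is slightly more than needed; Theorem~\ref{thm:Frohmader} suffices, since only the $f$-vector statement is used).

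However, two genuine gaps remain in the main inequality. First, the ``key lemma'' (if $N < N^*_k$ then $B_{k,d}(N) < a$) carries the entire technical weight of the argument and is only sketched: the phrase ``the canonical-rep constraints bound the trailing contribution tightly enough'' is precisely the nontrivial claim that needs a proof, and as you yourself observe, weak monotonicity of $B_{k,d}$ alone does not suffice. For $k=d$, your key lemma is exactly the contrapositive of Lemma~\ref{lem:CanonRep+d}\ref{item:CanonicalRepCompare} (with $r=d$), which the paper has already established with some care (via parts \ref{item:RevLexInterpret} and \ref{item:CanonicalRepTruncate} of the same lemma); you should invoke it rather than re-derive it. For $k<d$, your $B_{k,d}$ also changes the top indices from $k-j$ to $d-j$, so the statement is not directly covered by Lemma~\ref{lem:CanonRep+d}\ref{item:CanonicalRepCompare}, and the termwise comparison you propose would need a self-contained argument paralleling the proof of that lemma. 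Second, you defer the case $i\leq s$ to a remark; those values of $i$ are part of the claim. Both issues are dissolved at once by the paper's first move, which you miss: reduce the entire first assertion to the single case $i=d$ using Theorems~\ref{thm:Frohmader} and~\ref{thm:FFK} together with Lemma~\ref{lem:iteratedDifferentialOperator} (once the $i=d$ bound is in hand, iterating $\partial^{(d)}$ propagates it downward to all $i\geq 0$), and then apply Theorem~\ref{thm:FlagTopBettiUpperBound} with $k=d$ and close with Lemma~\ref{lem:CanonRep+d}\ref{item:CanonicalRepCompare}. That reduction eliminates the need to prove a $k<d$ monotonicity lemma for $B_{k,d}$ and handles small $i$ uniformly.
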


This gives the following corollary:

\begin{corollary}\label{cor:FlagLowerBoundfVecTuranGivenTopBetti}
Let $\Delta$ be a $(d-1)$-dimensional flag complex with $\beta_{d-1}(\Delta) = a > 0$. If $T$ is a $(d-1)$-dimensional Tur\'{a}n complex that satisfies $\beta_{d-1}(T) \leq a$, then $f_{i}(\Delta) \geq f_{i}(T)$ for all $i\geq 0$. Furthermore, if $\beta_{d-1}(T) = a$, then the following are equivalent: (i) $f_0(\Delta) = f_0(T)$; (ii) $f(\Delta) = f(T)$; and (iii) $\Delta \cong T$.
\end{corollary}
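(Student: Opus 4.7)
The plan is to deduce the corollary from Theorem \ref{thm:FlagLowerBoundfVecGivenTopBetti} after pinning down the $(d,d)$-canonical representation of $\beta_{d-1}(T)$. First I would observe that $T = \Delta(T_d(n))$ is the simplicial join of $d$ discrete vertex sets whose cardinalities equal the part sizes $|P_1|, \dots, |P_d|$ of $T_d(n)$. The iterated join formula for reduced homology together with $\widetilde{H}_0 \cong \Bbbk^{m-1}$ (and vanishing in higher degrees) for $m$ discrete points gives $\beta_{d-1}(T) = \prod_{j=1}^d (|P_j|-1)$. Writing $n = qd + r$ with $0 \leq r < d$, this becomes $q^r (q-1)^{d-r}$, which is precisely the number of $d$-cliques in $T_d(n-d)$; thus $\beta_{d-1}(T) = \binom{n-d}{d}_{\!d}$. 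Whenever this is positive, it is itself the $(d,d)$-canonical representation, with $s = 0$ and $a_d = n-d$. I also record that $f_{i-1}(T) = \binom{n}{i}_{\!d}$.

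Next I would establish the $f$-vector inequality. Let $a = \sum_{j=0}^s \binom{a_{d-j}}{d-j}_{\!d-j}$ be the $(d,d)$-canonical representation of $a = \beta_{d-1}(\Delta)$. Since $\beta_{d-1}(T) = \binom{n-d}{d}_{\!d} \leq a$, the maximality built into the canonical choice of $a_d$ forces $a_d \geq n-d$ when $n \geq 2d$. When instead $d \leq n < 2d$ (so $\beta_{d-1}(T) = 0$), a downward induction using the canonical-representation inequalities $a_{d-j-1} < a_{d-j} - \lfloor a_{d-j}/(d-j)\rfloor$ and $a_{d-s} \geq d-s$ shows $a_{d-j} \geq d - j$ for all $j$, and in particular $a_d \geq d > n-d$. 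Either way $a_d + d \geq n$. Because $T_d(N)$ embeds as a subgraph of $T_d(N+1)$, the function $N \mapsto \binom{N}{i}_{\!d}$ is monotone non-decreasing, and Theorem \ref{thm:FlagLowerBoundfVecGivenTopBetti} then yields
\[
f_{i-1}(\Delta) \;\geq\; \binom{a_d+d}{i}_{\!d} + \sum_{j=1}^s \binom{a_{d-j}+d-j}{i-j}_{\!d-j} \;\geq\; \binom{a_d+d}{i}_{\!d} \;\geq\; \binom{n}{i}_{\!d} \;=\; f_{i-1}(T)
\]
for every $i \geq 0$, giving the first assertion.

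For the equivalences, I would assume $\beta_{d-1}(T) = a$, so $a_d = n-d$ and $s = 0$. To get (i) $\Rightarrow$ (ii), note $f_0(\Delta) = n = a_d + d$ attains equality in Theorem \ref{thm:FlagLowerBoundfVecGivenTopBetti} at $i = 1 \geq s+1$, and the furthermore clause propagates equality to all $i \geq 1$, forcing $f(\Delta) = f(T)$. For (ii) $\Rightarrow$ (iii), the $1$-skeleton $G$ of $\Delta$ is $K_{d+1}$-free (as $\Delta$ is $(d-1)$-dimensional and flag) with $|E(G)| = f_1(\Delta) = f_1(T) = |E(T_d(n))|$; the equality case of Tur\'{a}n's theorem then forces $G = T_d(n)$, and since $\Delta$ is flag it coincides with its clique complex $T$. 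The implication (iii) $\Rightarrow$ (i) is immediate. The only step calling for some care is the identification $\beta_{d-1}(T) = \binom{n-d}{d}_{\!d}$ as a single canonical-representation term, after which monotonicity of $\binom{\cdot}{i}_{\!d}$ does the work.
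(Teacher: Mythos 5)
Your proof is correct and takes essentially the same route as the paper's: compute $\beta_{d-1}(T)=\binom{n-d}{d}_{\!d}$, compare it to the leading term $\binom{a_d}{d}_{\!d}$ of the canonical representation of $a$ to conclude $n\leq a_d+d$, use monotonicity of $N\mapsto\binom{N}{i}_{\!d}$ together with Theorem~\ref{thm:FlagLowerBoundfVecGivenTopBetti} for the inequality, and for $\beta_{d-1}(T)=a$ use the propagation clause of Theorem~\ref{thm:FlagLowerBoundfVecGivenTopBetti} plus the extremal case of Tur\'an-type results to get the circle of equivalences. The only differences are cosmetic: you compute $\beta_{d-1}(T)$ via the K\"unneth formula for joins rather than quoting Theorem~\ref{thm:PureColorShiftedBalancedBetti} (the paper itself notes both routes elsewhere); you invoke the classical Tur\'an theorem on the $1$-skeleton rather than Zykov's theorem (Theorem~\ref{thm:ZykovThm}); and you spell out the $d\leq n<2d$ case and the downward induction showing $a_d\geq d$, which the paper leaves implicit.
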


The following ``continuous'' analog of Theorem \ref{thm:FlagLowerBoundfVecGivenTopBetti} follows, which we state in terms of $f$-polynomials. For any $(d-1)$-dimensional simplicial complex $\Delta$, we define the \textit{$f$-polynomial} of $\Delta$ (in variable $x$) to be the polynomial
\[f_{\Delta}(x) := \sum_{i=0}^{d} f_{i-1}(\Delta)x^i.\]

\begin{theorem}\label{thm:FlagLowerBoundfPolyGivenTopBetti}
Let $\Delta$ be a $(d-1)$-dimensional flag complex with $\beta_{d-1}(\Delta) = a \geq 0$. Then the $f$-polynomial of $\Delta$ satisfies
$f_{\Delta}(x) \geq (1 + (\sqrt[d]{a}+1)x)^d$, where $\geq$ here denotes a coefficient-wise inequality of real polynomials. Furthermore, if $\sqrt[d]{a}$ is an integer, then this inequality is tight, with equality holding if and only if $\Delta \cong \Delta(T_d(d(\sqrt[d]{a}+1)))$.
\end{theorem}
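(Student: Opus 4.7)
The plan is to derive the continuous inequality from the discrete lower bound of Theorem~\ref{thm:FlagLowerBoundfVecGivenTopBetti} by applying Maclaurin's inequality to reduce the claim to a purely numerical polynomial inequality. First, the case $a=0$ is handled separately: since $\Delta$ is $(d-1)$-dimensional it contains at least one $(d-1)$-simplex, which already contributes $(1+x)^d$ to $f_\Delta(x)$; this matches the bound $(1+(0^{1/d}+1)x)^d$, and equality forces $\Delta$ to be this single $(d-1)$-simplex, i.e., $\Delta\cong\Delta(T_d(d))$.

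For $a>0$, let $a=\sum_{i=0}^{s}\binom{a_{d-i}}{d-i}_{d-i}$ be the $(d,d)$-canonical representation, and abbreviate $A_i:=\binom{a_{d-i}}{d-i}_{d-i}$ and $b_i:=A_i^{1/(d-i)}$. Theorem~\ref{thm:FlagLowerBoundfVecGivenTopBetti} yields
\[
f_{\Delta}(x)\ \geq\ \sum_{i=0}^{s}x^{i}P_{i}(x),\qquad P_{i}(x):=\prod_{j=1}^{d-i}(1+m_{i,j}x),
\]
where $m_{i,j}$ are the part sizes of $T_{d-i}(a_{d-i}+d-i)$; these equal $n_{i,j}+1$ for the part sizes $n_{i,j}$ of $T_{d-i}(a_{d-i})$, which satisfy $\prod_{j}n_{i,j}=A_{i}$. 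Combining the expansion $e_{k}(n+1)=\sum_{j}e_{j}(n)\binom{d-i-j}{k-j}$ with Maclaurin's inequality $e_{j}(n_{i,\cdot})\geq\binom{d-i}{j}b_{i}^{j}$ and the identity $\binom{d-i}{j}\binom{d-i-j}{k-j}=\binom{d-i}{k}\binom{k}{j}$ gives the coefficient-wise estimate $P_{i}(x)\geq(1+(1+b_{i})x)^{d-i}$. Hence it suffices to establish, coefficient-wise in $x$,
\[
\sum_{i=0}^{s}x^{i}\bigl(1+(1+b_{i})x\bigr)^{d-i}\ \geq\ \bigl(1+(1+a^{1/d})x\bigr)^{d},\qquad a=\sum_{i=0}^{s}b_{i}^{d-i}.
\]

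Proving this last inequality is the technical heart of the argument, and I expect it to be the main obstacle. It genuinely relies on the canonical-representation constraints $a_{d-i}-\lfloor a_{d-i}/(d-i)\rfloor>a_{d-i-1}$, since simple choices of $b_{i}$'s violating these constraints already yield counterexamples (e.g., $b_0=0$, $b_1=a^{1/(d-1)}$ for $d=2$). My plan is induction on $s$: the base case $s=0$ is an equality, and the inductive step bounds the increase in $a^{1/d}$ upon appending a new term $A_{s+1}$ against the positive contribution $x^{s+1}(1+(1+b_{s+1})x)^{d-s-1}$ of the new summand, using convexity of $t\mapsto(1+(1+t)x)^{d}$ together with the quantitative control on $b_{s+1}$ coming from the canonical-representation gap. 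Finally, if $\sqrt[d]{a}$ is an integer $m\geq 0$, then $a=m^{d}=\binom{dm}{d}_{d}$ has the single-term canonical representation with $a_{d}=dm$, and all inequalities in the chain above collapse to equalities: the Maclaurin step is tight because $T_{d}(d(m+1))$ has equal part sizes $m+1$, and the final polynomial inequality becomes trivially an equality when $s=0$. Consequently $f_{\Delta}$ coincides with the $f$-vector of $\Delta(T_{d}(d(m+1)))$, and the equality clause of Corollary~\ref{cor:FlagLowerBoundfVecTuranGivenTopBetti}, applied with $T=\Delta(T_{d}(d(m+1)))$ (which has $\beta_{d-1}(T)=m^{d}=a$), forces $\Delta\cong\Delta(T_{d}(d(m+1)))$.
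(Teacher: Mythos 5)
Your reduction to the purely numerical inequality
\[
\sum_{i=0}^{s}x^{i}\bigl(1+(1+b_{i})x\bigr)^{d-i}\ \geq\ \bigl(1+(1+a^{1/d})x\bigr)^{d},\qquad a=\sum_{i=0}^{s}b_{i}^{d-i},
\]
is cleanly set up (the Maclaurin / $h$-to-$f$ step giving $P_i(x)\geq(1+(1+b_i)x)^{d-i}$ is correct, and the $a=0$ and tightness cases are fine), but this is where the proof stops rather than where it begins. You explicitly call the inequality ``the technical heart'' and ``the main obstacle'' and offer only a plan — induction on $s$, with an unspecified use of convexity and the canonical-representation gap $a_{d-i}-\lfloor a_{d-i}/(d-i)\rfloor>a_{d-i-1}$ — for the inductive step. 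That step is genuinely delicate (your own $d=2$ near-counterexample shows how sensitive it is to the constraints on the $a_{d-i}$), and as written there is no proof of it. So this is a real gap, not a routine detail.

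The paper avoids that obstacle with a different two-stage argument that is worth internalizing. First it proves only the \emph{top-degree} coefficient inequality $f_{d-1}(\Delta)\geq(1+\alpha)^{d}$ with $\alpha=a^{1/d}$: applying the Frankl--F\"uredi--Kalai theorem together with Lemma~\ref{lem:iteratedDifferentialOperator} to a $d$-colorable complex with $a$ top faces gives $f_{k-1}\geq\binom{a_d}{k}_{d}+\dots+\binom{a_{d-s}}{k-s}_{d-s}$, the continuous FFK theorem (Theorem~\ref{thm:FFKcontinuous}) shows each such canonical-representation sum is $\geq\binom{d}{k}\alpha^{k}$, and summing over $k=0,\dots,d$ with Lemma~\ref{prop:SumTuranCoefficients} converts the left-hand sides into exactly the lower bound on $f_{d-1}(\Delta)$ provided by Theorem~\ref{thm:FlagLowerBoundfVecGivenTopBetti}, yielding $f_{d-1}(\Delta)\geq\sum_k\binom{d}{k}\alpha^{k}=(1+\alpha)^{d}$. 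Second, it bootstraps this single inequality to all coefficients: by Frohmader's theorem $f(\Delta)$ is the $f$-vector of a $d$-colorable complex, so a second application of the continuous FFK theorem (now with the known lower bound on $f_{d-1}(\Delta)$) gives $f_{j-1}(\Delta)\geq\binom{d}{j}(1+\alpha)^{j}$ for all $j$. In short, the paper never needs the coefficient-wise polynomial inequality you reduced to; it gets the lower-degree coefficients for free from $d$-colorability once the top coefficient is controlled. If you want to salvage your route, you would need to actually establish your polynomial inequality — but note that its $x^{d}$-coefficient is precisely the inequality the paper proves in stage one, so you would in effect be reproving that and then doing substantially more work on top.
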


\textbf{Outline:}
Section \ref{sec:Preliminaries} reviews basic definitions and notation, as well as collects useful results on Tur\'{a}n complexes and canonical representations. Section \ref{sec:HomologyColorShiftedBalanced} deals with the homology of color-shifted balanced complexes. This includes a balanced analog of Theorem \ref{thm:FlagTopBettiUpperBound}: We show that ``reverse-lexicographic'' balanced complexes maximize the top-dimensional reduced Betti numbers among all balanced complexes with the same number of top-dimensional faces (Theorem \ref{thm:RevlexComplexMaximizesBettiNum}). Theorems \ref{thm:FrohmaderWithHomology} and \ref{thm:FlagTopBettiUpperBound} are proven in Section \ref{sec:HomologyFlagComplexes}. In Section \ref{sec:LowerBoundsFaceNum}, we prove Theorem \ref{thm:FlagLowerBoundfVecGivenTopBetti}, Corollary \ref{cor:FlagLowerBoundfVecTuranGivenTopBetti}, and Theorem \ref{thm:FlagLowerBoundfPolyGivenTopBetti}, as well as their balanced analogs (Corollaries \ref{cor:BalancedfVecLowerBound} and \ref{cor:BalancedfPolyLowerBound}). Finally, we conclude in Section \ref{sec:FurtherRemarks} with related open problems.

\section{Preliminaries}\label{sec:Preliminaries}
Let $\mathbb{N}$ be the set of non-negative integers, and let $\mathbb{P}$ be the set of positive integers. For each $n\in \mathbb{P}$, define $[n] := \{1, \dots, n\}$. Throughout, let $U$ be a countable set, which we shall call the \textit{ground set}.

\subsection{Simplicial complexes}\label{subsec:SimplicialComplexes}
A \textit{simplicial complex} $\Delta$ on $U$ is a collection of subsets of $U$ that is closed under inclusion. Note that we do not require all singletons $\{u\}$ (for $u\in U$) to be contained in $\Delta$. Elements of $\Delta$ are called \textit{faces}, subsets of $U$ not in $\Delta$ are called \textit{non-faces}, and we assume that every simplicial complex is both finite and non-empty. The \textit{dimension} of each face $F\in \Delta$ is $\dim F := |F| - 1$, and the dimension of $\Delta$, denoted by $\dim \Delta$, is the maximum dimension of the faces. Maximal faces are called \textit{facets}, and $0$- (resp. $1$-)dimensional faces are called \textit{vertices} (resp. \textit{edges}). The collection of vertices and edges of $\Delta$ is called the \textit{underlying graph} of $\Delta$. If all facets of $\Delta$ have the same dimension, then $\Delta$ is \textit{pure}. For brevity, we say $\Delta$ is a ``$d$-complex'' to mean that $\Delta$ is a ``$d$-dimensional complex'', and we say $F$ is a ``$k$-face'' to mean that $F$ is a ``$k$-dimensional face''.

For each integer $k\geq -1$, let $\mathcal{F}_k(\Delta)$ be the set of all $k$-faces in $\Delta$, and recall that $f_k(\Delta)$ is the size of $\mathcal{F}_k(\Delta)$. In particular, $f_{-1}(\Delta) = 1$, since $\mathcal{F}_{-1}(\Delta)$ contains the \textit{empty face} $\emptyset$. If $\dim \Delta = d-1$, then the \textit{$h$-vector} of $\Delta$ is the vector $h(\Delta) = (h_0(\Delta), \dots, h_d(\Delta))$ that is uniquely determined by the equation
\begin{equation}\label{eqn:hPolynomial}
\sum_{i=0}^d h_i(\Delta)x^i = \sum_{i=0}^d f_{i-1}(\Delta)x^i(1-x)^{d-i}.
\end{equation}
The \textit{$h$-polynomial} of $\Delta$ (in variable $x$), which we denote by $h_{\Delta}(x)$, is defined to be the polynomial given in either side of \eqref{eqn:hPolynomial}.

Denote the \textit{vertex set} of $\Delta$ by $\mathcal{V}(\Delta) := \mathcal{F}_0(\Delta)$. Notice that each $v\in \mathcal{V}(\Delta)$ is of the form $v = \{u\}$ for some $u\in U$. We shall also define the \textit{vertex set} of each face $F\in \Delta$ to be the uniquely determined subset $\mathcal{V}(F)$ of $\mathcal{V}(\Delta)$ consisting of all vertices $v$ of $\Delta$ satisfying $v \subseteq F$. Also, let the set of facets of $\Delta$ be denoted by $\Facet(\Delta)$.

Given any finite collection $\mathcal{F} = \{F_1, \dots, F_k\}$ of subsets of $U$, there is a unique minimal simplicial complex that contains all elements in $\mathcal{F}$. Such a simplicial complex is said to be \textit{generated by} $\mathcal{F}$, and we shall denote this complex by $\langle \mathcal{F} \rangle$. For convenience, we shall write $\langle \{F_1, \dots, F_k\}\rangle$ simply as $\langle F_1, \dots, F_k\rangle$. If a simplicial complex $\Delta$ can be written as $\Delta = \langle F\rangle$ for a single subset $F \subseteq U$, then we say $\Delta$ is a \textit{simplex}.

A \textit{subcomplex} of $\Delta$ is a subcollection of $\Delta$ that is also a simplicial complex. Given any subset $W \subseteq U$, the subcomplex of $\Delta$ \textit{induced by $W$} is the simplicial complex $\Delta[W] := \{F \cap W: F\in \Delta\}$. Given any face $F\in \Delta$, the \textit{anti-star} of $F$ in $\Delta$ is the subcomplex $\Ast_{\Delta}(F) := \{G \in \Delta: G\cap F = \emptyset\}$, the \textit{link} of $F$ in $\Delta$ is the subcomplex $\Lk_{\Delta}(F) := \{G \in \Delta: G \cap F = \emptyset \text{ and }G \cup F \in \Delta\}$, and the \textit{open star} of $F$ in $\Delta$ is the collection of faces $\St_{\Delta}(F) := \{G\in \Delta: F\subseteq G\}$. The \textit{closed star} of $F$ in $\Delta$ is the subcomplex $\overline{\St}_{\Delta}(F) := \langle \St_{\Delta}(F)\rangle$. Notice that $\Lk_{\Delta}(F) = \Ast_{\Delta}(F) \cap \overline{\St}_{\Delta}(F)$. If $\Gamma_1$ and $\Gamma_2$ are simplicial complexes with disjoint vertex sets, then we define the \textit{join} of $\Gamma_1$ and $\Gamma_2$ to be the simplicial complex $\Gamma_1 * \Gamma_2 := \{F_1 \cup F_2: F_1 \in \Gamma_1, F_2 \in \Gamma_2\}$. If $v = \{u\}$ for some $u\in U$, such that $v\not\in \mathcal{V}(\Delta)$, then we say that $\Delta * \langle v\rangle$ is the \textit{cone} on $\Delta$ with \textit{conepoint} $v$.

A simplicial complex $\Delta$ is called \textit{Cohen--Macaulay} (over $\Bbbk$) if $\beta_i(\Lk_{\Delta}(F)) = 0$ for all $F\in \Delta$ and all $0\leq i < \dim(\Lk_{\Delta}(F))$. For a good introduction to Cohen--Macaulay complexes, see \cite{book:StanleyGreenBook}.

\subsection{Clique complexes of graphs}\label{sec:CliqueComplexesGraphs}
A \textit{graph} is an ordered pair $G = (V, E)$, such that $V$ is a finite set, and $E \subseteq \binom{V}{2}$. Elements of $V$ and $E$ are called {\it vertices} and {\it edges} respectively. If $E = \binom{V}{2}$, then we say that $G$ is {\it complete}. The {\it complement} of $G$ is the graph $\overline{G} := (V, \binom{V}{2}\backslash E)$. A {\it subgraph} of $G$ is a graph $G^{\prime} = (V^{\prime}, E^{\prime})$ such that $V^{\prime} \subseteq V$ and $E^{\prime} \subseteq E$. A {\it clique} of $G$ is a complete subgraph of $G$, and a {\it $k$-clique} of $G$ is a clique of $G$ with $k$ vertices.

The {\it clique complex} of $G$, denoted by $\Delta(G)$, is the simplicial complex formed by the cliques in $G$, and the {\it independence complex} of $G$ is the clique complex $\Delta(\overline{G})$. Notice that a {\it flag complex} can equivalently be defined as a simplical complex for which every minimal non-face has at most two elements. (If $\{u\} \in \mathcal{V}(\Delta)$ for every $u$ in the ground set $U$, then every minimal non-face of $\Delta$ has exactly two elements.) Here, the minimality is with respect to set inclusion. Note that every clique complex is a flag complex, and every flag complex is the clique complex of its underlying graph (treated as a graph).

\subsection{Colored complexes and balanced complexes}\label{subsec:ColoredComplexes}
A \textit{$d$-colored complex} is a pair $(\Gamma,\pi)$, where $\Gamma$ is a simplicial complex on a ground set $U$, and $\pi = (U_1, \dots, U_d)$ is an ordered partition of $U$, such that every face $F$ of $\Gamma$ satisfies $|F \cap U_i| \leq 1$ for all $i\in [d]$. If in addition, we have $\dim \Gamma = d-1$, then we say that $(\Gamma, \pi)$ is a \textit{balanced complex}. A simplicial complex $\Delta$ on a ground set $U$ is called a \textit{$d$-colorable complex} if there exists an ordered partition $\pi = (U_1, \dots, U_d)$ of $U$ such that $(\Delta,\pi)$ is a $d$-colored complex. For convenience, we say that $\Gamma$ (resp. $\Delta$) is $d$-colored (resp. $d$-colorable) with respect to $\pi$. By abuse of notation, we say that $\Delta$ is a \textit{balanced complex} if there exists an ordered partition $\pi$ such that $(\Delta,\pi)$ is a balanced complex.

Suppose $(\Delta, \pi)$ is a $d$-colored complex, where $\pi = (U_1, \dots, U_d)$. We shall assume that the elements of each $U_i$ are labeled as $u_{i,j}$, i.e. $U_i = \{u_{i,j}\}_{j\in [\lambda_j]}$ for some $\lambda_j \in \mathbb{P}$ (if $U_i$ is finite), or $U_i = \{u_{i,j}\}_{j\in \mathbb{P}}$ (if $U_i$ is countably infinite). Such a labeling gives a natural linear order $u_{i,1} < u_{i,2} < u_{i,3} < \dots$ on each $U_i$. We say that $(\Delta, \pi)$ is \textit{color-shifted} if every $F\in \Delta$ and $i\in [d]$ satisfy the following property:
\[\text{$u_{i,j} \in F\text{ and }u_{i,j^{\prime}} \not\in F\text{ for some integers }1\leq j^{\prime} < j \Rightarrow (F\backslash\{u_{i,j}\}) \cup \{u_{i,j^{\prime}}\} \in \Delta$.}\]

For each $k\in \mathbb{N}$, let $\binom{\pi}{k}$ be the collection consisting of every subset $U' \subseteq U$ of size $k$ that satisfies $|U' \cap U_i| \leq 1$ for all $i\in [d]$. By definition, the $d$-colored complex $(\Delta,\pi)$ satisfies $\mathcal{F}_{k-1}(\Delta) \subseteq \binom{\pi}{k}$ for all $k\in [d]$. Next, consider the linear order on $U$ uniquely determined by $u_{i,j} > u_{i', j'}$ if $j>j'$; or $j=j'$ and $i>i'$. With this linear order, define the \textit{revlex} (reverse-lexicographic) order $\leq_{r\ell}$ on $\binom{\pi}{k}$ by $A >_{r\ell} B$ if and only if $\max(A-B) > \max(B-A)$. We then say that $(\Delta,\pi)$ is \textit{revlex} (or equivalently, that $\Delta$ is \textit{revlex} with respect to $\pi$) if for every $k\in [d]$, the following property holds:
\[\text{If $F, F' \in \binom{\pi}{k}$ such that $F >_{r\ell} F'$ and $F\in \Delta$, then $F' \in \Delta$.}\]

By abuse of notation, a $d$-colorable complex $\Delta$ is called \textit{color-shifted} (resp. \textit{revlex}) if there exists an ordered partition $\pi = (U_1, \dots, U_d)$ such that $(\Delta,\pi)$ is color-shifted (resp. revlex), where each $U_i$ (resp. $U$) is implicitly assumed to be linearly ordered. Frequently, we consider the ordered partition $\Pi_d = (\Pi_{d,1}, \dots, \Pi_{d,d})$, defined by $\Pi_{d,i} := \{k\in \mathbb{P}: k \equiv i\pmod{d}\}$ for each $i\in [d]$.

\subsection{Tur\'{a}n complexes and canonical representations}\label{subsec:TuranComplexes}
Let $n$ and $d$ be positive integers. By construction, the Tur\'{a}n complex $\Delta(T_d(n))$ is a pure balanced flag complex. For $0\leq k\leq n$, recall that $\binom{n}{k}_{\!d} = f_{k-1}(\Delta(T_d(n)))$.

\begin{lemma}\label{prop:HVectorTuranFVectorTuran}
If $n\ge d$, then the $h$-vector of $\Delta(T_d(n))$ equals the $f$-vector of $\Delta(T_d(n-d))$.
\end{lemma}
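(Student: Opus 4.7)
The plan is to exploit the fact that the Turán graph $T_d(n)$ is complete multipartite, so its clique complex decomposes as a simplicial join of discrete (0-dimensional) complexes. Concretely, writing $n = dq + r$ with $0 \leq r < d$, the complex $\Delta(T_d(n))$ is the join of $r$ copies of the 0-complex on $q+1$ vertices and $d-r$ copies of the 0-complex on $q$ vertices. The hypothesis $n \geq d$ is exactly what is needed to guarantee $q \geq 1$, so that $n - d = d(q-1) + r$ is a legitimate Euclidean decomposition and the analogous description of $\Delta(T_d(n-d))$ makes sense.

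The key observation is that the $f$-polynomial is multiplicative under simplicial joins, since a face of $\Gamma_1 * \Gamma_2$ is a disjoint union $F_1 \sqcup F_2$ with $F_i \in \Gamma_i$. The $f$-polynomial of a 0-complex on $m$ vertices is $1 + mx$, so the join decomposition immediately yields
\[
f_{\Delta(T_d(n))}(x) \;=\; (1 + (q+1)x)^r\,(1 + qx)^{d-r}.
\]
Applying the same reasoning to $n-d$ gives $f_{\Delta(T_d(n-d))}(x) = (1+qx)^r(1+(q-1)x)^{d-r}$.

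Next, I would rewrite the definition~\eqref{eqn:hPolynomial} of the $h$-polynomial in the compact form $h_\Delta(x) = (1-x)^d f_\Delta\!\left(\tfrac{x}{1-x}\right)$, valid whenever $\dim \Delta = d-1$ and the degree of $f_\Delta$ is at most $d$. Substituting the product formula above, each factor $1 + mx$ transforms as $(1-x) + mx = 1 + (m-1)x$ after clearing the $(1-x)^d$, giving
\[
h_{\Delta(T_d(n))}(x) \;=\; (1 + qx)^r\,(1 + (q-1)x)^{d-r}.
\]
This matches $f_{\Delta(T_d(n-d))}(x)$ exactly, so reading off coefficients proves $h_i(\Delta(T_d(n))) = f_{i-1}(\Delta(T_d(n-d)))$ for all $i$.

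There is essentially no obstacle here; the only point requiring a touch of care is the edge case where either $r = 0$ or $q = 1$ (so that $n-d$ might be $0$ and $\Delta(T_d(0))$ is the irrelevant complex $\{\emptyset\}$ with $f$-polynomial $1$). Both the $r = 0$ factor $(1+qx)^0 = 1$ and the $q = 1$ factor $(1+(q-1)x)^{d-r} = 1$ collapse correctly, so the identity continues to hold at the boundary as long as $n \geq d$.
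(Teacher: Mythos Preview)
Your proof is correct and follows essentially the same approach as the paper: both decompose $\Delta(T_d(n))$ as a join of $0$-dimensional complexes and use multiplicativity to reduce to the single-factor identity that the $h$-polynomial of a $0$-complex on $m$ vertices is $1+(m-1)x$. The only cosmetic difference is that the paper invokes multiplicativity of the $h$-polynomial under joins directly, whereas you establish it via multiplicativity of the $f$-polynomial together with the substitution $h_\Delta(x)=(1-x)^d f_\Delta\!\left(\tfrac{x}{1-x}\right)$.
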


\begin{proof}
Recall that for arbitrary simplicial complexes $\Delta_1, \dots, \Delta_k$ with disjoint vertex sets, we have
\[h_{\Delta_1 * \dots * \Delta_k}(x) = \prod_{i\in [k]} h_{\Delta_i}(x);\]
this identity follows easily from the definitions of joins and $h$-polynomials.
The $h$-polynomial of a simplicial complex generated by $m>0$ vertices is $1+(m-1)x$, which coincides with the $f$-polynomial of a simplicial complex generated by $m-1$ vertices.
Thus,
\[h_{\Delta(T_d(n))}(x) = \prod_{i\in [d]} (1 + (|\mathcal{V}_i|-1)x),\]
which is exactly the $f$-polynomial of $\Delta(T_d(n-d))$.
\end{proof}

\begin{remark}\label{rem:PascalTriangle}
Lemma~\ref{prop:HVectorTuranFVectorTuran} gives a quick method to compute the values of $\binom{n}{0}_{\!d}, \binom{n}{1}_{\!d}, \dots, \binom{n}{d}_{\!d}$ via an iteration of (the reverse of) Stanley's trick; cf. \cite[p. 250]{book:Ziegler:LecturesOnPolytopes}. Suppose we know the values of $\binom{m}{0}_{\!d}, \binom{m}{1}_{\!d}, \dots, \binom{m}{d}_{\!d}$ for some $m>0$. Then we can compute the values of $\binom{m+d}{0}_{\!d}, \binom{m+d}{1}_{\!d}, \dots, \binom{m+d}{d}_{\!d}$ by using the following variant of Pascal's triangle: Construct a triangular array with rows labeled from $r=0$ to $r=d+1$, so that each $r$-th row has $r+1$ entries. Let the first entry of each row be $1$, and let the last entry of the $i$-th row be $\binom{m}{i}_{\!d}$ for every $i\in [d+1]$. Note that the last entry of the last row is always $0$. Compute the rest of the entries by using the rule that every entry is the sum of the two adjacent entries above it. The resulting first $(d+1)$ entries of the last row are precisely $\binom{m+d}{0}_{\!d}, \binom{m+d}{1}_{\!d}, \dots, \binom{m+d}{d}_{\!d}$ respectively. We can then determine $\binom{n}{0}_{\!d}, \binom{n}{1}_{\!d}, \dots, \binom{n}{d}_{\!d}$ by iterating this process, starting with $m$ being the unique integer in $[d]$ such that $m\equiv n\pmod{d}$.
\end{remark}

The following easy combinatorial identity is probably known; for completeness, we give a proof.
\begin{lemma}\label{prop:SumTuranCoefficients}
If $m,d$ are positive integers, then $\binom{m}{0}_{\!d} + \binom{m}{1}_{\!d} + \dots + \binom{m}{d}_{\!d} = \binom{m+d}{d}_{\!d}$.
\end{lemma}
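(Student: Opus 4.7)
The plan is to derive the identity as a one-line corollary of Lemma \ref{prop:HVectorTuranFVectorTuran}, by evaluating the $h$-polynomial of $\Delta(T_d(m+d))$ at $x=1$.

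First, I would recall the standard fact that for any pure $(d-1)$-dimensional simplicial complex $\Delta$, setting $x=1$ in the defining equation \eqref{eqn:hPolynomial} of the $h$-polynomial makes every term on the right-hand side vanish except the $i=d$ term (where $(1-x)^{d-i} = 1$), yielding
\[
\sum_{i=0}^{d} h_i(\Delta) \;=\; f_{d-1}(\Delta).
\]
Since $\Delta(T_d(m+d))$ is pure of dimension $d-1$, this applies with $\Delta = \Delta(T_d(m+d))$, giving $\sum_{i=0}^{d} h_i(\Delta(T_d(m+d))) = f_{d-1}(\Delta(T_d(m+d))) = \binom{m+d}{d}_{\!d}$.

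Next, I would apply Lemma \ref{prop:HVectorTuranFVectorTuran} with $n = m+d$; since $m\geq 1$, the hypothesis $n\geq d$ is satisfied, and the lemma states that the $h$-vector of $\Delta(T_d(m+d))$ coincides entry-by-entry with the $f$-vector of $\Delta(T_d(m))$. Therefore
\[
\sum_{i=0}^{d} h_i(\Delta(T_d(m+d))) \;=\; \sum_{i=0}^{d} f_{i-1}(\Delta(T_d(m))) \;=\; \sum_{i=0}^{d} \binom{m}{i}_{\!d}.
\]
Combining the two displays yields the desired identity. There is no real obstacle here; the only subtlety is verifying that the range $n\geq d$ required by Lemma \ref{prop:HVectorTuranFVectorTuran} is met, which it is for every positive integer $m$.
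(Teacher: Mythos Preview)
Your proof is correct. It is somewhat different from the paper's argument, though the two are closely related in spirit. The paper proves the identity via the variant Pascal triangle described in Remark~\ref{rem:PascalTriangle}: it shows by induction on $i$ that the partial sum $s_i = \binom{m}{0}_{\!d} + \dots + \binom{m}{i}_{\!d}$ coincides with the second-to-last entry of the $(i+1)$-th row, and then reads off the case $i=d$. Your approach bypasses this row-by-row induction by evaluating the $h$-polynomial of $\Delta(T_d(m+d))$ at $x=1$ and invoking Lemma~\ref{prop:HVectorTuranFVectorTuran} directly; this is cleaner and shorter. (Incidentally, the identity $\sum_i h_i(\Delta) = f_{d-1}(\Delta)$ that you use follows from \eqref{eqn:hPolynomial} for any $(d-1)$-dimensional complex, so purity is not actually needed there, though it does hold in your case.) Both arguments ultimately rest on the same $h$-vector/$f$-vector relationship, but yours exploits it in one step rather than unpacking it through the triangular array.
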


\begin{proof}
Consider the variant Pascal's triangle $T$ as constructed in Remark \ref{rem:PascalTriangle}, and define the partial sum $s_i := \binom{m}{0}_{\!d} + \binom{m}{1}_{\!d} + \dots + \binom{m}{i}_{\!d}$ for each $0\leq i\leq d$. Since $\binom{m}{0}_{\!d} = 1$, it can be shown inductively that $s_i$ equals the second last entry of the $(i+1)$-th row of $T$; the assertion then follows from the observation that the second last entry of the $(d+1)$-th row equals $\binom{m+d}{d}_{\!d}$.
\end{proof}
We will need Zykov's generalization of the well-known Tur\'{a}n's theorem:
\begin{theorem}[{Zykov~\cite{Zykov1949}}]\label{thm:ZykovThm}
Let $\Delta$ be a flag $(d-1)$-complex with $n$ vertices. Then
\begin{equation*}
f_i(\Delta) \leq f_i(\Delta(T_d(n))) = \binom{n}{i+1}_{\!d}
\end{equation*}
for all $i\in [d-1]$. Moreover, this inequality is strict for all $i\in [d-1]$, unless $\Delta \cong \Delta(T_d(n))$.
\end{theorem}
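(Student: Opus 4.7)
The plan is to argue by strong induction on $n = f_0(\Delta)$. Write $\Delta = \Delta(G)$ where $G$ is the underlying graph; since $\dim\Delta\leq d-1$, the graph $G$ is $K_{d+1}$-free.

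For the base case $n \leq d$, the Tur\'{a}n graph $T_d(n)$ is just $K_n$, so $\binom{n}{i+1}_{\!d} = \binom{n}{i+1}$. Any flag complex $\Delta$ on $n$ vertices is a subcomplex of the full $(n-1)$-simplex, which gives $f_i(\Delta) \leq \binom{n}{i+1}$ with equality if and only if $\Delta$ is the full simplex, i.e., $\Delta \cong \Delta(T_d(n))$.

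For the inductive step $n > d$, I would pick a vertex $v$ of minimum degree $\delta$ in $G$ and apply the star-deletion decomposition
\[
f_i(\Delta) = f_i(\Ast_\Delta(v)) + f_{i-1}(\Lk_\Delta(v)) = f_i(\Delta(G-v)) + f_{i-1}(\Delta(G[N_G(v)])),
\]
in which $\Delta(G-v)$ is a flag $(d-1)$-complex on $n-1$ vertices while $\Delta(G[N_G(v)])$ is a flag $(d-2)$-complex on $\delta$ vertices (a $d$-clique inside $N_G(v)$ would yield a $(d+1)$-clique in $G$ together with $v$). Induction then gives
\[
f_i(\Delta(G-v)) \leq \binom{n-1}{i+1}_{\!d}, \qquad f_{i-1}(\Delta(G[N_G(v)])) \leq \binom{\delta}{i}_{\!d-1}.
\]
By Tur\'{a}n's theorem, $2|E(G)| \leq 2|E(T_d(n))| = n^2 - \sum_j |P_j|^2 \leq n^2(d-1)/d$ (the last step by Cauchy--Schwarz on the part sizes of $T_d(n)$), so the average degree of $G$ is at most $n(d-1)/d$; since $\delta$ is an integer, $\delta \leq \lfloor n(d-1)/d\rfloor = n-\lceil n/d\rceil$. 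Because $\binom{m}{i}_{\!d-1}$ is nondecreasing in $m$, the bounds combine to give
\[
f_i(\Delta) \leq \binom{n-1}{i+1}_{\!d} + \binom{n-\lceil n/d\rceil}{i}_{\!d-1} = \binom{n}{i+1}_{\!d},
\]
where the final identity is the same star-deletion decomposition applied to $T_d(n)$ at a vertex in a largest part, using $\Ast_{\Delta(T_d(n))}(v) = \Delta(T_d(n-1))$ and $\Lk_{\Delta(T_d(n))}(v) \cong \Delta(T_{d-1}(n-\lceil n/d\rceil))$.

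For the equality case, $f_i(\Delta) = \binom{n}{i+1}_{\!d}$ forces equality in each of the three bounds above; by induction, $\Delta(G-v) \cong \Delta(T_d(n-1))$, $\Delta(G[N_G(v)]) \cong \Delta(T_{d-1}(n-\lceil n/d\rceil))$, and $\delta = n-\lceil n/d\rceil$. A short combinatorial check on how $v$ can re-attach — its $\delta$ neighbors must exhaust exactly $d-1$ of the $d$ parts of $T_d(n-1)$, placing $v$ into the remaining part — then yields $G \cong T_d(n)$. The main technical obstacle is the degree bound $\delta \leq n-\lceil n/d\rceil$ (combining Tur\'{a}n's edge bound with the Cauchy--Schwarz integrality trick) together with the uniqueness-of-attachment argument in the equality case; the rest is a clean structural induction via the star-deletion formula.
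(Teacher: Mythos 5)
The paper does not prove this statement: it is cited from Zykov~\cite{Zykov1949} as a known classical result, so there is no ``paper's own proof'' to compare against. Your proposal, however, is a reasonable self-contained derivation, and the key ingredients are all sound: the antistar--link face-count decomposition $f_i(\Delta) = f_i(\Ast_\Delta(v)) + f_{i-1}(\Lk_\Delta(v))$, the facts that for a flag complex $\Ast_\Delta(v)$ and $\Lk_\Delta(v)$ are again flag (clique complexes of $G-v$ and $G[N_G(v)]$, the latter $K_d$-free), the minimum-degree bound $\delta \le n - \lceil n/d\rceil$ obtained from Tur\'an's edge bound plus Cauchy--Schwarz and integrality, and the Tur\'an-graph identity $\binom{n}{i+1}_{\!d} = \binom{n-1}{i+1}_{\!d} + \binom{n-\lceil n/d\rceil}{i}_{\!d-1}$ via the same decomposition at a vertex of a largest part. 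The reattachment argument in the equality case is also correct, and it only needs $G-v \cong T_d(n-1)$ together with $\delta = n - \lceil n/d\rceil$ and $K_{d+1}$-freeness; it does not actually require knowing the structure of the link.

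A few points you should tighten. First, the induction is not merely on $n$: the link step invokes the statement with parameter $d-1$, so you must either run a joint induction on $(d,n)$ or state that you are proving the result for all $d$ simultaneously by strong induction on $n$ (which works, since $\delta < n$). Second, your degree bound uses Tur\'an's theorem on $n$ vertices, which is precisely the $i=1$ case of the theorem you are proving; this is not circular if you explicitly take Tur\'an's theorem as a known prerequisite, but you should say so. (An alternative that avoids any appearance of circularity: take a maximal clique $K$ in $G$, note $|K|\le d$, and observe that the sets $V\setminus N(v)$ for $v\in K$ cover $V$, so $\sum_{v\in K}(n-\deg v)\ge n$ and hence some $v\in K$ has $\deg v \le n - \lceil n/d\rceil$.) Third, in the equality case you write that ``by induction'' the link is isomorphic to $\Delta(T_{d-1}(n-\lceil n/d\rceil))$; for $i\ge 2$ this follows from the uniqueness statement at level $(d-1,\delta)$, but for $i=1$ the quantity $f_0(\Lk_\Delta(v))=\delta$ carries no information and the inductive uniqueness does not apply. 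As noted above this does not break the argument because the reattachment step doesn't use the link's structure, but the exposition should not claim it. Finally, to pin down $\delta = n-\lceil n/d\rceil$ in the equality case you implicitly use that $\binom{m}{i}_{\!d-1}$ is strictly increasing in $m$ in the relevant range; that is true (since $T_{d-1}(m)\subsetneq T_{d-1}(m+1)$ and the added vertex lies in a new $i$-clique once $m\ge i-1$ and $i\le d-1$) but deserves a sentence.
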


We now collect useful results on canonical representations. Let $N, k, r$ be arbitrary positive integers satisfying $r\geq k$, and suppose that \eqref{eqn:CanonicalRepDefn} is the \textit{$(k,r)$-canonical representation} of $N$. By the uniqueness of $(k,r)$-canonical representations, the following integers are well-defined:
\begin{align}
\label{eqn:N+CanonRep} N_+^{(k,r)} &:=\binom{N_k+r}{k}_{\!r} + \binom{N_{k-1}+r-1}{k-1}_{\!r-1} + \dots + \binom{N_{k-s}+r-s}{k-s}_{\!r-s};\\
\label{eqn:N-CanonRep} N_-^{(k,r)} &:=\binom{N_k-r}{k}_{\!r} + \binom{N_{k-1}-(r-1)}{k-1}_{\!r-1} + \dots + \binom{N_{k-s}-(r-s)}{k-s}_{\!r-s}.
\end{align}
In particular, recall our convention that $\binom{n}{k}_{\!r} = 0$ for all integers $n$ such that $n<k$. The next lemma follows from the definition and uniqueness of canonical representations; its last item is the one that we will need later, and it follows from the previous items.
\begin{lemma}\label{lem:CanonRep+d}
Let $N$ be a positive integer whose $(k,r)$-canonical representation is given by \eqref{eqn:CanonicalRepDefn}.
Then:
\begin{enum*}
\item\label{item:N+CanonicalRep} The $(k,r)$-canonical representation of $N_+^{(k,r)}$ is given by \eqref{eqn:N+CanonRep}.
\item\label{item:N-CanonicalRep} If $N_k-r\geq k$, then the $(k,r)$-canonical representation of $N_-^{(k,r)}$ is
\[N_-^{(k,r)} =\binom{N_k-r}{k}_{\!r} + \binom{N_{k-1}-(r-1)}{k-1}_{\!r-1} + \dots + \binom{N_{k-s_0}-(r-s_0)}{k-s_0}_{\!r-s_0},\]
where $s_0 := \max\{t: t\leq s, N_{k-t} - (r-t) \geq k-t\}$. If instead $N_k-r < k$, then $N_-^{(k,r)} = 0$.
\item\label{item:RevLexInterpret}
Let $L$ be a positive integer whose $(k,r)$-canonical representation is given by
\[L = \binom{L_k}{k}_{\!r} + \binom{L_{k-1}}{k-1}_{\!r-1} + \dots + \binom{L_{k-t}}{k-t}_{\!r-t}.\]
Then $L<N$ if and only if either $t<s$ and $L_{k-i}=N_{k-i}$ for all $0\le i\le t$, or the minimal $i$ such that $L_{k-i}\neq N_{k-i}$ satisfies $L_{k-i}<N_{k-i}$.
\item\label{item:CanonicalRepTruncate} For any $0\leq m\leq t$,
\[\binom{L_{k-m}+1}{k-m}_{\!r-m} > \binom{L_{k-m}}{k-m}_{\!r-m}  + \binom{L_{k-m-1}}{k-m-1}_{\!r-m-1} + \dots + \binom{L_{k-t}}{k-t}_{\!r-t}.\]
\item\label{item:CanonicalRepCompare} $L_+^{(k,r)} \leq N$ if and only if $L \leq N_-^{(k,r)}$.
\end{enum*}
\end{lemma}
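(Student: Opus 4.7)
The plan is to prove the five items in the order (i), (ii), (iv), (iii), (v). The main combinatorial input is a Pascal-like recurrence for the Tur\'an coefficients; items (i) and (ii) reduce to direct verifications of the canonical-representation defining conditions; item (iv) follows from the recurrence by telescoping; item (iii) is a lexicographic consequence of (iv); and the target item (v) follows formally from (i), (ii), and (iii).

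First, I would establish the recurrence
\[
\binom{n+1}{k}_{\!d} = \binom{n}{k}_{\!d} + \binom{n-\lfloor n/d\rfloor}{k-1}_{\!d-1},
\]
obtained by adding a new vertex $v$ to a smallest color class of $T_d(n)$ and splitting the $k$-cliques of $T_d(n+1)$ according to whether they contain $v$; the link $\Lk_{T_d(n+1)}(v)$ is isomorphic to $T_{d-1}(n-\lfloor n/d\rfloor)$ by a direct check on part sizes. For (i) and (ii), I would verify that the proposed expressions satisfy the uniqueness conditions of the $(k,r)$-canonical representation. The key observation is that the gap condition shifts cleanly: $\lfloor (N_{k-i}\pm(r-i))/(r-i)\rfloor = \lfloor N_{k-i}/(r-i)\rfloor \pm 1$, so the inequality $N_{k-i}-\lfloor N_{k-i}/(r-i)\rfloor > N_{k-i-1}$ translates directly to the corresponding inequality for the shifted coefficients. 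For (ii), when some $N_{k-i_0}-(r-i_0) < k-i_0$, the gap conditions force the same failure for all larger $i$, so truncating at the index $s_0$ given in the statement is both necessary and valid.

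For (iv), I iterate the recurrence, writing
\[
\binom{L_{k-m}+1}{k-m}_{\!r-m} - \binom{L_{k-m}}{k-m}_{\!r-m} = \binom{L_{k-m}-\lfloor L_{k-m}/(r-m)\rfloor}{k-m-1}_{\!r-m-1} \geq \binom{L_{k-m-1}+1}{k-m-1}_{\!r-m-1},
\]
where the inequality uses the canonical gap condition $L_{k-m}-\lfloor L_{k-m}/(r-m)\rfloor \geq L_{k-m-1}+1$. Iterating down to $m=t$ produces a telescoping chain that dominates $\binom{L_{k-m-1}}{k-m-1}_{\!r-m-1} + \cdots + \binom{L_{k-t}}{k-t}_{\!r-t}$; strictness at the base level comes from $L_{k-t} \geq k-t \geq 1$, which ensures that $T_{r-t}(L_{k-t}+1)$ contains a $(k-t)$-clique through the new vertex. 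For (iii), I then compare the canonical representations of $L$ and $N$ lexicographically: if they first differ at some index $i$ with $L_{k-i} < N_{k-i}$, item (iv) bounds the tail of $L$ from index $i$ strictly below $\binom{L_{k-i}+1}{k-i}_{\!r-i} \leq \binom{N_{k-i}}{k-i}_{\!r-i}$, giving $L < N$; the case $t < s$ with matching leading coefficients is handled by applying (iv) to the tail of $N$.

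Finally, (v) is an equivalence of two lexicographic inequalities, with the representations of $L_+^{(k,r)}$ and $N_-^{(k,r)}$ supplied by (i) and (ii) and compared against $N$ and $L$ via (iii). Both conditions $L_+^{(k,r)} \leq N$ and $L \leq N_-^{(k,r)}$ unpack coefficient-wise to the same condition $L_{k-i} + (r-i) \leq N_{k-i}$ on the relevant indices. The main obstacle I anticipate is reconciling the truncation of $N_-^{(k,r)}$ in (ii) with the untruncated representation of $L_+^{(k,r)}$ in (i), so that the two comparisons agree when some $N_{k-i}-(r-i)$ drops below $k-i$; the resolution is that at such a boundary, the condition $L_{k-i}+(r-i) \leq N_{k-i}$ forces $L_{k-i} < k-i$, which by the canonical conditions on $L$ means the corresponding term of $L$ is already absent, yielding consistency.
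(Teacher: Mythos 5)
Your proposal is correct. The paper dispatches parts (i)--(iv) in a single sentence as ``immediate from the definition'' or ``following from the uniqueness of canonical representations,'' and devotes its entire proof to part (v), which it establishes by truncating both $(k,r)$-canonical representations to a common length $w = \min\{s,t\}$ and then arguing through a case analysis (equality vs.\ strict inequality in the truncated comparison, then sub-cases on $s$ versus $t$). You instead build everything from the bottom up: the Pascal-like recurrence $\binom{n+1}{k}_{d} = \binom{n}{k}_{d} + \binom{n-\lfloor n/d\rfloor}{k-1}_{d-1}$ (which is valid --- the link of a vertex added to a smallest color class of $T_d(n)$ is indeed $T_{d-1}(n-\lfloor n/d\rfloor)$) feeds the clean floor identity $\lfloor (N_{k-i}\pm(r-i))/(r-i)\rfloor = \lfloor N_{k-i}/(r-i)\rfloor \pm 1$ for (i) and (ii), the telescoping sum for (iv), and then (iii) and (v). For (v) your argument is a direct lexicographic comparison of coefficient tuples rather than a truncation-and-case-split; the ``main obstacle'' you identify --- reconciling the truncation index $s_0$ in $N_-^{(k,r)}$ with the untruncated $L_+^{(k,r)}$ --- is real, and your resolution is exactly right: if $N_{k-i}-(r-i) < k-i$ then $L_{k-i}+(r-i)\leq N_{k-i}$ would force $L_{k-i}<k-i$, which is impossible for a canonical coefficient, so the index $i$ necessarily lies beyond $t$ and the truncation is harmless. (One small thing worth making explicit is that the condition $N_{k-i}-(r-i)\geq k-i$ is monotone in $i$, so $s_0$ really is a clean cut-off; this follows from the gap conditions.) Overall your route is more elementary and self-contained than the paper's, at the cost of developing the recurrence machinery that the paper simply cites as ``the definition.''
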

\begin{proof}
Parts \ref{item:N+CanonicalRep} and \ref{item:N-CanonicalRep} are immediate from the definition of $(k,r)$-canonical representations, while parts \ref{item:RevLexInterpret} and \ref{item:CanonicalRepTruncate} follow from the uniqueness of canonical representations. To prove part \ref{item:CanonicalRepCompare}, let $w := \min\{s,t\}$, and suppose that $L_+^{(k,r)} \leq N$. This implies
\begin{equation}\label{eqn:L+truncateNtruncate}
\binom{L_k+r}{k}_{\!r} + \dots + \binom{L_{k-w}+r-w}{k-w}_{\!r-w} \leq \binom{N_k}{k}_{\!r} + \dots + \binom{N_{k-w}}{k-w}_{\!r-w}.
\end{equation}
Indeed, \eqref{eqn:L+truncateNtruncate} clearly holds if $s\leq t$, while the case $s>t$ is implied by part \ref{item:RevLexInterpret}, since both expressions in \eqref{eqn:L+truncateNtruncate} are $(k,r)$-canonical representations.

If equality holds in \eqref{eqn:L+truncateNtruncate}, then the assumption $L_+^{(k,r)} \leq N$ forces $s\geq t$. Consequently, the uniqueness of $(k,r)$-canonical representations yields $L_{k-i}+(r-i) = N_{k-i}$ for all $0\leq i\leq t=w$, hence
\[L = \binom{L_k}{k}_{\!r} + \dots + \binom{L_{k-t}}{k-t}_{\!r-t} = \binom{N_k-r}{k}_{\!r} + \dots + \binom{N_{k-t}-(r-t)}{k-t}_{\!r-t} \leq N_-^{(k,r)}.\]

If instead the inequality in \eqref{eqn:L+truncateNtruncate} is strict, then by part \ref{item:RevLexInterpret}, $L_{k-j}+r-j<N_{k-j}$ for the smallest $0\leq j\leq w$ such that $L_{k-j}+r-j\neq N_{k-j}$; this gives
\begin{equation}\label{LtruncateN-truncate}
\binom{L_k}{k}_{\!r} + \dots + \binom{L_{k-w}}{k-w}_{\!r-w} < \binom{N_k-r}{k}_{\!r} + \dots + \binom{N_{k-w}-(r-w)}{k-w}_{\!r-w}.
\end{equation}
The case $s\geq t$ clearly gives $L <N_-^{(k,r)}$, while for the case $s<t$, part \ref{item:CanonicalRepTruncate} yields
\begin{equation}\label{eqn:LLroundedUp}
L < \binom{L_k}{k}_{\!r} + \dots + \binom{L_{k-(s-1)}}{k-(s-1)}_{\!r-(s-1)} + \binom{L_{k-s}+1}{k-s}_{\!r-s},
\end{equation}
where by definition, the $(k,r)$-canonical representation of the right-hand expression in \eqref{eqn:LLroundedUp} is
\[\binom{L_k}{k}_{\!r} + \dots + \binom{L_{k-(s'-1)}}{k-(s'-1)}_{\!r-(s'-1)} + \binom{L_{k-s'}+1}{k-s'}_{\!r-s'}\]
for some uniquely determined $0\leq s'\leq s$. Thus by part \ref{item:RevLexInterpret}, the right-hand side of \eqref{eqn:LLroundedUp} is at most the right-hand side of \eqref{LtruncateN-truncate}, which gives $L< N_-^{(k,r)}$ in this case. Finally, an analogous argument shows that $L\leq N_-^{(k,r)}$ implies $L_+^{(k,r)} \leq N$.
\end{proof}

Since the expression \eqref{eqn:CanonicalRepDefn} is unique (when $N,k,r$ are fixed), we can define the functions $\partial_k^{(r)}$ and $\partial_{(r)}^{k}$ on positive integers as follows.
\begin{align*}
\partial_{k}^{(r)}(N) &:= \binom{N_k}{k-1}_{\!r} + \binom{N_{k-1}}{k-2}_{\!r-1} + \dots + \binom{N_{k-s}}{k-s-1}_{\!r-s}.\\
\partial_{(r)}^{k}(N) &:= \binom{N_k}{k+1}_{\!r} + \binom{N_{k-1}}{k}_{\!r-1} + \dots + \binom{N_{k-s}}{k-s+1}_{\!r-s}.
\end{align*}
For convenience, define $\partial_{k}^{(r)}(0) = \partial_{(r)}^{k}(0) = 0$.

The following two lemmas follow easily from the definition of canonical representations.

\begin{lemma}\label{lem:iteratedDifferentialOperator}
If the $(k,r)$-canonical representation of $N$ is given by \eqref{eqn:CanonicalRepDefn}, then for all $0\leq j < k$,
\[\partial_{k-j}^{(r)}(\cdots\partial_{k-1}^{(r)}(\partial_{k}^{(r)}(N))) = \binom{N_k}{k-j-1}_{\!r} + \binom{N_{k-1}}{k-j-2}_{\!r-1} + \dots + \binom{N_{k-s}}{k-j-s-1}_{\!r-s}.\]
\end{lemma}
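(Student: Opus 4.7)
The plan is to proceed by induction on $j$. The base case $j = 0$ is immediate: the definition of $\partial_k^{(r)}$ applied to the $(k,r)$-canonical representation of $N$ given in \eqref{eqn:CanonicalRepDefn} produces exactly the claimed formula $\sum_{i=0}^{s} \binom{N_{k-i}}{k-i-1}_{r-i}$.

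For the inductive step, suppose the formula holds for some $j - 1$ with $1 \leq j < k$, so that
\[
M := \partial_{k-j+1}^{(r)}\bigl(\cdots \partial_{k}^{(r)}(N)\bigr) = \sum_{i=0}^{s} \binom{N_{k-i}}{k-j-i}_{\!r-i}.
\]
The key observation is that this same expression is the $(k-j, r)$-canonical representation of $M$. Indeed, the top indices $N_k, N_{k-1}, \dots, N_{k-s}$ are inherited unchanged from the original $(k,r)$-canonical representation of $N$, so the gap conditions $N_{k-i} - \lfloor N_{k-i}/(r-i) \rfloor > N_{k-i-1}$ for $0 \leq i < s$ hold automatically: these conditions depend only on the top indices and on the parameter $r$, not on the bottom indices. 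The remaining condition $N_{k-s} \geq (k-j)-s > 0$ follows from the original bound $N_{k-s} \geq k-s$ as long as $s+j<k$. Once we know $M$ is in canonical form, applying $\partial_{k-j}^{(r)}$ decrements each bottom index by one, which gives exactly the claimed formula for the next iteration and closes the induction.

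The main (and essentially only) subtlety arises in the boundary regime $s + j \geq k$, where the bottom indices $k-j-i$ reach zero or become negative, so the expression for $M$ is no longer literally a $(k-j, r)$-canonical representation in the sense of \eqref{eqn:CanonicalRepDefn}. Here I would fall back on the paper's conventions: terms with negative bottom index are zero by the extension $\binom{n}{\ell}_r = 0$ for $\ell < 0$, and a trailing $\binom{N_{k-s}}{0}_{r-s} = 1$ is absorbed into the shortened canonical representation truncated at the largest index $i$ with $k-j-i \geq 1$, leaving the subsequent application of $\partial_{k-j}^{(r)}$ unchanged term-by-term. Since every step is purely bookkeeping about the defining inequalities of canonical representations, I expect no deeper obstacle than this boundary verification.
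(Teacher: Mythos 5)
The base case and the interior inductive step ($s+j < k$) are fine, but the boundary case $s+j\geq k$, which you flag as ``the main (and essentially only) subtlety,'' is precisely where your argument breaks: absorbing the trailing $\binom{N_{k-s}}{0}_{r-s}=1$ into the last nonzero term does \emph{not} leave the subsequent application of $\partial_{k-j}^{(r)}$ unchanged term-by-term. The absorption bumps the last top index $N_{k-s+1}$ to $N_{k-s+1}+1$, and if the gap condition $N_{k-s+2}-\lfloor N_{k-s+2}/(r-s+2)\rfloor > N_{k-s+1}$ was tight (gap exactly 1), the increment violates it, triggering a carry that can cascade through several terms and produce a canonical representation with completely different top indices. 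Concretely, take $k=r=3$ and $N=7=\binom{5}{3}_{\!3}+\binom{3}{2}_{\!2}+\binom{1}{1}_{\!1}$ (so $s=2$). Then $\partial_3^{(3)}(N)=\binom{5}{2}_{\!3}+\binom{3}{1}_{\!2}+\binom{1}{0}_{\!1}=8+3+1=12$, and your proposed absorption would give $\binom{5}{2}_{\!3}+\binom{4}{1}_{\!2}$, which is not canonical since $5-\lfloor 5/3\rfloor=4\not>4$; the true $(2,3)$-canonical representation of $12$ is simply $\binom{6}{2}_{\!3}$. So the canonical representation of the intermediate value bears no term-by-term resemblance to your truncated-and-absorbed expression.

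The lemma nevertheless holds in this example: $\partial_2^{(3)}(12)=\binom{6}{1}_{\!3}=6$ agrees with the claimed $\binom{5}{1}_{\!3}+\binom{3}{0}_{\!2}+\binom{1}{-1}_{\!1}=5+1+0=6$. But this numerical agreement is not explained by your argument, and in general requires either a careful analysis of how the cascade interacts with the next application of $\partial^{(r)}$, or a different route altogether — for instance, interpreting the right-hand side of the lemma as the $(k-j-1)$-dimensional face count of the revlex $r$-colorable complex generated by the $N$ revlex-smallest $k$-sets, and using Theorem~\ref{thm:FFK} to see that such a complex satisfies $f_{\ell-2}=\partial_{\ell}^{(r)}(f_{\ell-1})$ at every level. (The paper itself offers no proof, asserting only that the lemma follows ``easily'' from the definitions, so you had no model to follow; but the boundary case is a genuine obstacle, not mere bookkeeping, and your proof as written does not close it.)
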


\begin{lemma}\label{lem:iteratedDifferentialOperatorSecond}
If the $(k,r)$-canonical representation of $N$ is given by \eqref{eqn:CanonicalRepDefn}, then for all $j\geq 0$,
\[\partial^{k+j}_{(r)}(\cdots\partial^{k+1}_{(r)}(\partial^{k}_{(r)}(N))) = \binom{N_k}{k+j+1}_{\!r} + \binom{N_{k-1}}{k+j}_{\!r-1} + \dots + \binom{N_{k-s}}{k+j-s+1}_{\!r-s}.\]
\end{lemma}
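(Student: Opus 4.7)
I would argue by induction on $j \geq 0$. The base case $j = 0$ is immediate from the definition of $\partial^{k}_{(r)}$, so the work lies in the inductive step.

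For the inductive step, assume the formula holds at level $j-1$, so that
\[ M := \partial^{k+j-1}_{(r)}(\cdots\partial^{k+1}_{(r)}(\partial^{k}_{(r)}(N))) = \binom{N_k}{k+j}_{\!r} + \binom{N_{k-1}}{k+j-1}_{\!r-1} + \dots + \binom{N_{k-s}}{k+j-s}_{\!r-s}.\]
To apply $\partial^{k+j}_{(r)}$, I must identify the $(k+j,r)$-canonical representation of $M$. The plan is to show that the displayed expression, after discarding any terms $\binom{N_{k-i}}{k+j-i}_{\!r-i}$ that vanish (i.e. with $N_{k-i} < k+j-i$), is exactly that canonical representation. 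Let $s' \leq s$ be the largest index for which the term survives; then $M = \sum_{i=0}^{s'} \binom{N_{k-i}}{k+j-i}_{\!r-i}$. Three conditions from the definition of canonical representation must be checked: the strict inequalities $N_{k-i} - \lfloor N_{k-i}/(r-i) \rfloor > N_{k-i-1}$ for $0 \le i < s'$, the positivity $N_{k-s'} \geq k+j-s' > 0$, and the vanishing regime beyond $s'$. The first list of inequalities is inherited verbatim from the hypothesis that the original expression is the $(k,r)$-canonical representation of $N$. The positivity $k+j-s' > 0$ follows because $s \leq k-1$ from the hypothesis $N_{k-s} \geq k-s > 0$, so $k+j-s' \geq k+j-(k-1) \geq 1$.

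Having identified the $(k+j,r)$-canonical representation of $M$, the definition of $\partial^{k+j}_{(r)}$ yields
\[ \partial^{k+j}_{(r)}(M) = \binom{N_k}{k+j+1}_{\!r} + \binom{N_{k-1}}{k+j}_{\!r-1} + \dots + \binom{N_{k-s'}}{k+j-s'+1}_{\!r-s'}.\]
To recover the formula as stated with $s$ terms rather than $s'$, I note that any ``missing'' terms $\binom{N_{k-i}}{k+j+1-i}_{\!r-i}$ with $i > s'$ are automatically zero: indeed $k+j+1-i > k+j-i > N_{k-i}$ by the definition of $s'$. Appending these vanishing summands completes the inductive step.

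The main obstacle is the bookkeeping of the truncation: the representation at each step may lose terms that have collapsed to zero, and one needs to verify cleanly that (i) the surviving prefix still satisfies the strict canonical-representation inequalities and the positivity of the last exponent, and (ii) the terms dropped remain zero after one more application of $\partial^{\bullet}_{(r)}$, so that writing the full-length formula is harmless. Once this is handled, the induction goes through mechanically, mirroring the structure of the proof of Lemma~\ref{lem:iteratedDifferentialOperator}.
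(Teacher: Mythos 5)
The paper provides no proof of this lemma, stating only that it ``follows easily from the definition of canonical representations,'' so there is no official argument to compare against. Your inductive approach is the natural way to fill in those details and the overall structure is correct; you rightly identify the truncation bookkeeping as the delicate point, but two of the steps you flag still need to be carried out explicitly. First, the equality $M = \sum_{i=0}^{s'}\binom{N_{k-i}}{k+j-i}_{\!r-i}$ and the subsequent identification of this sum with a canonical representation implicitly require that the surviving terms form a contiguous prefix $\{0,\dots,s'\}$; this is true, but needs a short justification: the canonical-representation inequalities $N_{k-i}-\lfloor N_{k-i}/(r-i)\rfloor > N_{k-i-1}$ force $N_k > N_{k-1} > \dots > N_{k-s}$, so these integers drop by at least one per step while the thresholds $k+j-i$ drop by exactly one, and hence $N_{k-i}\geq k+j-i$ implies $N_{k-i'}\geq k+j-i'$ for every $i'\leq i$ (so the nonvanishing terms are exactly a prefix). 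Second, you should dispose of the case where no term survives, i.e. $M=0$: here one invokes the convention $\partial^{k+j}_{(r)}(0)=0$ and observes that every summand on the right-hand side also vanishes, because $N_{k-i}<k+j-i<k+j+1-i$ for all $i$; this also shows the formula persists as $j$ increases further. With these two points made explicit, the induction closes and the proof is valid.
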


\begin{theorem}[Frankl--F\"{u}redi--Kalai~\cite{FFK1988}]\label{thm:FFK}
Let $\mathbf{f} = (f_{-1}, f_0, \dots, f_{d-1})$ be a $(d+1)$-tuple of positive integers for some $d\leq r$. Then the following are equivalent.
\begin{enum*}
\item $\mathbf{f}$ is the $f$-vector of an $r$-colorable complex.
\item $\mathbf{f}$ is the $f$-vector of a revlex $r$-colorable complex with respect to $\Pi_r$.
\item $f_{-1} = 1$, and $f_{k-1} \geq \partial_{k+1}^{(r)}(f_k)$ for all $k\in [d-1]$.
\item $f_{-1} = 1$, and $\partial_{(r)}^k(f_{k-1}) \geq f_k$ for all $k\in [d-1]$.
\end{enum*}
\end{theorem}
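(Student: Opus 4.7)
The plan is to prove the cycle $(ii) \Rightarrow (i) \Rightarrow (iii) \Leftrightarrow (iv) \Rightarrow (ii)$. The implication $(ii) \Rightarrow (i)$ is immediate from the definitions. For the equivalence $(iii) \Leftrightarrow (iv)$, I would establish a Galois-type duality between the two shadow operators: for positive integers $M$ and $N$ with $(k+1,r)$- and $(k,r)$-canonical representations respectively, one has $\partial_{k+1}^{(r)}(M) \leq N$ if and only if $\partial_{(r)}^k(N) \geq M$. This can be proved by parsing the canonical representations term-by-term using Lemma \ref{lem:CanonRep+d}\ref{item:RevLexInterpret} and the uniqueness of such representations, in a spirit analogous to the proof of Lemma \ref{lem:CanonRep+d}\ref{item:CanonicalRepCompare}.

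For $(iii) \Rightarrow (ii)$, I would construct the desired complex explicitly. Define $\Delta_k$ to be the set of the first $f_k$ elements of $\binom{\Pi_r}{k+1}$ in revlex order, and set $\Delta := \bigcup_{k=-1}^{d-1} \Delta_k$. The crucial combinatorial fact, proved by induction on the length of the $(k+1,r)$-canonical representation of $f_k$, is that the lower shadow of the initial $f_k$ elements of $\binom{\Pi_r}{k+1}$ is exactly the initial $\partial_{k+1}^{(r)}(f_k)$ elements of $\binom{\Pi_r}{k}$: each term $\binom{N_{k+1-i}}{k+1-i}_{r-i}$ of the canonical representation corresponds to faces living on a revlex-initial set of vertices, and its shadow forms the corresponding term $\binom{N_{k+1-i}}{k-i}_{r-i}$. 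Hypothesis $(iii)$ then guarantees this shadow lies in $\Delta_{k-1}$, so $\Delta$ is a simplicial complex; by construction it is revlex and $r$-colored with respect to $\Pi_r$.

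The main obstacle is $(i) \Rightarrow (iii)$. The strategy is colored combinatorial shifting. Given an $r$-colored complex $(\Delta,\pi)$, I would iterate color-preserving ``swap'' replacements $F \mapsto (F \setminus \{u_{i,j}\}) \cup \{u_{i,j'}\}$ for $j' < j$ to produce a color-shifted $r$-colored complex $\Delta'$ with the same $f$-vector as $\Delta$. Showing that such an iteration terminates in a genuine simplicial complex requires care: one should either define the swap at the level of the whole complex via a global symmetrization on face pairs, or employ a potential function (e.g., the sum of vertex labels across all faces) to certify termination. Once $\Delta'$ is color-shifted, I would prove $f_{k-1}(\Delta') \geq \partial_{k+1}^{(r)}(f_k(\Delta'))$ by partitioning $\mathcal{F}_k(\Delta')$ according to whether faces contain the minimal vertex $u_{1,1}$ of the first color class: the ``contains'' part corresponds bijectively (via $F \mapsto F\setminus\{u_{1,1}\}$) to $(k-1)$-faces of $\Lk_{\Delta'}(\{u_{1,1}\})$, which is an $(r-1)$-colored complex, while the ``does not contain'' part is an $r$-colored subcomplex on a smaller ground set. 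The color-shifted property ensures this partition aligns with the leading block of the $(k+1,r)$-canonical representation, so an induction on $r$ and on $f_k$ (viewed through the canonical representation) yields the shadow bound. The delicate bookkeeping of how the canonical representation decomposes across this double induction, and the verification that the two parts of the partition recombine into exactly the value $\partial_{k+1}^{(r)}(f_k(\Delta'))$, is the main technical challenge.
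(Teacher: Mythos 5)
The paper does not prove this theorem; it is cited verbatim from Frankl--F\"{u}redi--Kalai~\cite{FFK1988}, so there is no in-paper proof to compare against. Judged on its own merits, your plan follows the standard shifting-based approach that underlies the original proof and the Kruskal--Katona paradigm, and the overall structure (the cycle $(ii)\Rightarrow(i)\Rightarrow(iii)\Leftrightarrow(iv)\Rightarrow(ii)$) is sound. The pieces that are essentially formalities are handled correctly: $(ii)\Rightarrow(i)$ is trivial, the Galois adjunction $\partial_{k+1}^{(r)}(M)\leq N \Leftrightarrow \partial_{(r)}^{k}(N)\geq M$ is the right mechanism for $(iii)\Leftrightarrow(iv)$, and the observation that the shadow of a revlex-initial segment of $\binom{\Pi_r}{k+1}$ is exactly the revlex-initial segment of $\binom{\Pi_r}{k}$ of size $\partial_{k+1}^{(r)}(f_k)$ is precisely what makes the explicit construction for $(iii)\Rightarrow(ii)$ closed under inclusion.

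The part that is genuinely unfinished --- and which you correctly flag as the hard part --- is $(i)\Rightarrow(iii)$. Two points of caution on your sketch. First, colored combinatorial shifting does not require the care you worry about if it is set up as the standard simultaneous replacement operator $S_{u_{i,j'},u_{i,j}}$: for $j'<j$, replace every face $F$ with $u_{i,j}\in F$, $u_{i,j'}\notin F$, and $(F\setminus\{u_{i,j}\})\cup\{u_{i,j'}\}\notin\Delta$ by $(F\setminus\{u_{i,j}\})\cup\{u_{i,j'}\}$; this is automatically a simplicial complex with the same $f$-vector, and the sum-of-labels potential handles termination. Second and more importantly, the partition of $\mathcal{F}_k(\Delta')$ by membership of $u_{1,1}$ does not by itself reduce the problem: what you actually need is the superadditivity-type inequality $\partial_{k}^{(r-1)}\big(f_{k-1}(\Lk_{\Delta'}(u_{1,1}))\big) + \partial_{k+1}^{(r)}\big(f_k(\Ast_{\Delta'}(u_{1,1}))\big) \geq \partial_{k+1}^{(r)}\big(f_k(\Delta')\big)$, together with the fact (from color-shiftedness) that the shadow of the link part injects into the antistar's $(k-1)$-faces, and this is where all the difficulty lives. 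Your phrase ``the color-shifted property ensures this partition aligns with the leading block of the canonical representation'' is pointing at the right lemma, but as written it is an assertion, not a proof; the actual alignment is nontrivial because the leading term $\binom{N_{k+1}}{k+1}_r$ of the canonical representation corresponds to a Tur\'{a}n complex whose color classes have unequal sizes, which does not match a split along a single vertex in a single color class in an obvious way. To make the plan a proof you would need to state and prove that numerical superadditivity lemma for the colored shadow operator, which is the real content of the FFK argument.
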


\begin{theorem}[{Frohmader~\cite{Frohmader2008:FaceVectorsOfFlagComplexes}}]\label{thm:Frohmader}
Let $\Delta$ be a flag complex of dimension $<d$. Then there exists a revlex $d$-colorable complex $\Gamma$ with the same $f$-vector as $\Delta$.
\end{theorem}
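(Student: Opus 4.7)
The plan is to reduce to the Frankl--F\"{u}redi--Kalai characterization (Theorem \ref{thm:FFK}). The equivalence (ii)$\Leftrightarrow$(iv) of that theorem says that a vector is realized by a revlex $d$-colorable complex with respect to $\Pi_d$ precisely when it satisfies $\partial_{(d)}^{k}(f_{k-1}) \geq f_k$ for every $k\in[d-1]$. Hence it suffices to prove the following ``flag Kruskal--Katona'' inequality: if $\Delta$ is a flag complex of dimension $<d$, then
\[
\partial_{(d)}^{k}\bigl(f_{k-1}(\Delta)\bigr) \;\geq\; f_k(\Delta) \qquad \text{for every } k\in[d-1].
\]
Once this is established, Theorem \ref{thm:FFK} directly produces the desired revlex $d$-colorable $\Gamma$ with the same $f$-vector as $\Delta$.

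My strategy for the key inequality is a graph-shifting argument on the underlying graph $G$ of $\Delta = \Delta(G)$. I would construct a sequence of graphs $G = G_0, G_1, \dots, G_m$ such that $f_{k-1}(\Delta(G_i))$ is preserved while $f_k(\Delta(G_i))$ is non-decreasing at each step, ultimately reaching a terminal graph $G_m$ whose clique complex is revlex $d$-colorable with respect to $\Pi_d$. For such a $G_m$ the inequality becomes an equality by the very definition of $\partial_{(d)}^{k}$, so it follows for $\Delta$ as well. Each elementary move would be a Zykov-type symmetrization on a pair of non-adjacent vertices (compare Theorem \ref{thm:ZykovThm}), with the direction of substitution dictated by a potential function measuring ``distance to revlex''; the move must be chosen so that the $(k-1)$-clique count is preserved while the $k$-clique count does not drop.

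\textbf{Main obstacle.} The difficulty is the design of this elementary move. Zykov's classical symmetrization is guaranteed to preserve only the top clique count, whereas standard algebraic (squarefree) shifting preserves the entire $f$-vector but destroys the flag property. Reconciling these two behaviors---keeping the complex flag while simultaneously monitoring two consecutive face numbers---is the technical heart of the matter. A plausible route is induction on $d$: fix the smallest $k$ at which the inequality might fail, and exploit that each vertex link $\Lk_{\Delta}(v)$ is itself a flag complex of dimension $<d-1$, so the inductive hypothesis applies. Combining the bounds over all vertices via the double-counting identity $\sum_v f_{k-2}(\Lk_{\Delta}(v)) = k\cdot f_{k-1}(\Delta)$, together with a convexity estimate for $\partial_{(d-1)}^{k-1}$, may yield the desired inequality. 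The equality case, which by Theorem \ref{thm:ZykovThm} should be governed by Tur\'{a}n complexes, is delicate and likely requires a separate extremal analysis to handle rigorously.
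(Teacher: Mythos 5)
The paper does not prove Theorem~\ref{thm:Frohmader}; it is quoted as an external result from Frohmader's 2008 paper and used as a black box. There is therefore no in-paper proof to compare against. On its own terms, your submission is a strategy sketch rather than a proof, and the gaps are substantial.

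Your first step is correct: by the equivalence (ii)$\Leftrightarrow$(iv) in Theorem~\ref{thm:FFK}, it suffices to establish the ``flag Kruskal--Katona'' inequalities $\partial_{(d)}^{k}(f_{k-1}(\Delta)) \geq f_k(\Delta)$ for $k\in[d-1]$. But those inequalities \emph{are} the content of Frohmader's theorem, and neither of your two proposed routes closes the gap. The Zykov-symmetrization route has a concrete obstruction: Zykov's move (cloning one vertex onto a non-adjacent vertex) preserves the vertex count and can be chosen to not decrease the clique count of a single fixed size, but there is no version of it that simultaneously preserves $f_{k-1}$ and does not decrease $f_k$ for a given $k\geq 2$, because the move can change $f_{k-1}$ in either direction depending on the local structure. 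So the invariant you want your potential-function argument to maintain is not available from this elementary move, and Theorem~\ref{thm:ZykovThm} (which controls only one face number at a time in terms of $f_0$) does not supply it.

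Your inductive route via links is closer to the genuine difficulty but is not an argument yet. Double counting gives $\sum_{v} f_{k-2}(\Lk_\Delta(v)) = k\cdot f_{k-1}(\Delta)$ and $\sum_{v} f_{k-1}(\Lk_\Delta(v)) = (k+1)\cdot f_k(\Delta)$, and the inductive hypothesis applied to each flag complex $\Lk_\Delta(v)$ (of dimension $<d-1$) gives $\partial_{(d-1)}^{k-1}(f_{k-2}(\Lk_\Delta(v))) \geq f_{k-1}(\Lk_\Delta(v))$. Summing, what one actually needs to finish is an inequality of the shape $\partial_{(d)}^{k}\bigl(\tfrac{1}{k}\sum_v f_{k-2}(\Lk_\Delta(v))\bigr) \geq \tfrac{1}{k+1}\sum_v \partial_{(d-1)}^{k-1}(f_{k-2}(\Lk_\Delta(v)))$, which mixes the parameters $d$ and $d-1$ and does not follow from a generic ``convexity'' property of the $\partial$ operators (these are piecewise-defined via canonical representations and are neither convex nor concave in any usable global sense). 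This is precisely the technical heart of Frohmader's proof, and invoking it as ``may yield the desired inequality'' leaves the argument open. In short: the reduction to Theorem~\ref{thm:FFK} is right, but the key inequality is asserted rather than proved, and the specific mechanisms you propose (Zykov moves preserving $f_{k-1}$, a convexity estimate for $\partial$) either fail as stated or are unestablished.
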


\section{Homology of color-shifted balanced complexes}\label{sec:HomologyColorShiftedBalanced}
Throughout this section, assume that the ground set $U$ has size $\geq d$, and fix an ordered partition $\pi = (U_1, \dots, U_d)$ of $U$, such that the elements of each $U_i$ are labeled as $u_{i,j}$, i.e. $U_i = \{u_{i,j}\}_{j\in [\lambda_j]}$ for some $\lambda_j \in \mathbb{P}$ (if $U_i$ is finite), or $U_i = \{u_{i,j}\}_{j\in \mathbb{P}}$ (if $U_i$ is countably infinite). Assume that $u_{i,1} < u_{i,2} < \dots$, i.e. each $U_i$ is linearly ordered.

\begin{theorem}[{Babson--Novik~\cite[Thm. 5.7]{BabsonNovik2006:FaceNumerbsNongenericInitialIdeals}}]\label{thm:PureColorShiftedBalancedBetti}
If $(\Delta, \pi)$ is a pure color-shifted balanced $(d-1)$-complex, then $\beta_i(\Delta) = 0$ for all $i<d-1$, and
\begin{equation}\label{eqn:TopBettiNumber}
\beta_{d-1}(\Delta) = |\{F\in \mathcal{F}_{d-1}(\Delta): u_{i,1} \not\in F \text{ for all }i\in [d]\}|.
\end{equation}
\end{theorem}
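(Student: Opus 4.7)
The strategy is to produce an explicit shelling of $\Delta$ whose restriction faces recover exactly the facets enumerated in the statement. Because a pure shellable $(d-1)$-complex is homotopy equivalent to a wedge of $(d-1)$-spheres, one for each facet $F_k$ in the shelling order satisfying $R(F_k) = F_k$, this will simultaneously force $\beta_i(\Delta)=0$ for $i<d-1$ and identify $\beta_{d-1}(\Delta)$ with the claimed count.

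Since $(\Delta,\pi)$ is pure and balanced, every facet $F$ contains exactly one vertex from each colour class, so we may encode $F$ by the tuple $\mathbf{j}(F) := (j_1(F), \ldots, j_d(F))$, where $u_{i,j_i(F)}$ is the unique vertex of $F$ in $U_i$. List the facets as $F_1, F_2, \ldots, F_m$ so that $\mathbf{j}(F_1), \ldots, \mathbf{j}(F_m)$ appear in lexicographic order. Fix $k \geq 2$, set $F = F_k$, and consider any $u_{i, j_i} \in F$. The core claim is that $F \setminus \{u_{i, j_i}\} \in \langle F_1, \ldots, F_{k-1}\rangle$ if and only if $j_i > 1$. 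For the forward direction, if $j_i > 1$ then balance forces $u_{i,1}\notin F$, so color-shiftedness produces $F' := (F \setminus \{u_{i, j_i}\}) \cup \{u_{i, 1}\} \in \Delta$; the set $F'$ has exactly one vertex per colour (hence is a facet), and $\mathbf{j}(F')$ differs from $\mathbf{j}(F)$ only in position $i$, with a strictly smaller entry, so $F' \in \{F_1, \ldots, F_{k-1}\}$ and $F \setminus \{u_{i,j_i}\} \subseteq F'$. For the reverse direction, if $j_i = 1$ then any facet $G$ containing $F\setminus\{u_{i,1}\}$ must agree with $F$ in every colour class except possibly the $i$-th, where $u_{i,1}$ is replaced by some $u_{i,k_i}$ with $k_i \geq 1$; hence $\mathbf{j}(G) \geq \mathbf{j}(F)$ lexicographically, so no prior facet contains $F \setminus \{u_{i,1}\}$.

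Consequently $\langle F\rangle \cap \langle F_1, \ldots, F_{k-1}\rangle$ is generated by the $(d-2)$-faces $\{F \setminus \{u_{i,j_i}\} : j_i > 1\}$, a pure $(d-2)$-subcomplex of $\partial\langle F\rangle$, so the chosen lexicographic order is a shelling with restriction face $R(F) = \{u_{i, j_i(F)} \in F : j_i(F) > 1\}$; by balance, $R(F) = F$ is equivalent to $u_{i,1} \notin F$ for all $i \in [d]$. Invoking the standard fact that a pure shellable $(d-1)$-complex is homotopy equivalent to a wedge of $(d-1)$-spheres indexed by $\{k : R(F_k) = F_k\}$ then yields both $\beta_i(\Delta) = 0$ for $i < d-1$ and the asserted formula for $\beta_{d-1}(\Delta)$. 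The delicate step is the reverse implication in the shelling check above: one must be certain that when $j_i = 1$ the face $F \setminus \{u_{i,1}\}$ really does not appear in the earlier subcomplex, and this is exactly where balance collaborates with the lexicographic order, since substituting $u_{i,1}$ by any $u_{i, k_i}$ with $k_i \geq 1$ can only weakly push the encoding tuple forward.
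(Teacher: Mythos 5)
Your shelling approach is correct in spirit and gives a clean, purely combinatorial proof, but note that the paper does not prove this theorem itself: it cites Babson--Novik, with Murai's correction and alternative proof in the pure case, both of which go through algebraic/shifting machinery. Your argument is thus a genuinely different (and arguably more elementary) route.

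There is, however, a small but real gap in the shelling verification. You establish the biconditional that $F\setminus\{u_{i,j_i}\}\in\langle F_1,\dots,F_{k-1}\rangle$ if and only if $j_i>1$, and then assert ``consequently'' that $\langle F\rangle\cap\langle F_1,\dots,F_{k-1}\rangle$ is generated by the $(d-2)$-faces $\{F\setminus\{u_{i,j_i}\}:j_i>1\}$. The biconditional only tells you which $(d-2)$-faces of $F$ lie in the earlier subcomplex; it does not by itself rule out a lower-dimensional face $G\subsetneq F$ lying in some earlier facet $F_\ell$ without being contained in any of those $(d-2)$-faces. To close the gap, argue as follows: given $\ell<k$, let $i^*=\min\{i: j_i(F_k)\neq j_i(F_\ell)\}$. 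Since $F_\ell<_{\mathrm{lex}}F_k$, one has $j_{i^*}(F_\ell)<j_{i^*}(F_k)$, hence $j_{i^*}(F_k)\geq 2$, and $u_{i^*,j_{i^*}(F_k)}\notin F_\ell$. Therefore $F_k\cap F_\ell\subseteq F_k\setminus\{u_{i^*,j_{i^*}(F_k)}\}$, which is one of your designated $(d-2)$-faces (and, by color-shiftedness, does lie in an earlier facet). This shows the intersection is pure of codimension one and generated as you claim, after which your identification of the restriction faces and the invocation of the wedge-of-spheres homotopy type are correct. Minor additional remark: you should observe that the lex-minimal facet is $\{u_{1,1},\dots,u_{d,1}\}$ (which exists by color-shiftedness applied to any facet), so that every $F_k$ with $k\geq 2$ indeed has some $j_i>1$ and the intersection is nonempty.
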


The original statement of \cite[Thm. 5.7]{BabsonNovik2006:FaceNumerbsNongenericInitialIdeals} asserts that if $(\Delta, \pi)$ is a (not necessarily pure) color-shifted balanced complex, then $\beta_j(\Delta)$ equals the number of $j$-dimensional facets $F$ of $\Delta$ satisfying $u_{i,1} \not\in F$ for all $i\in [d]$. However, as pointed out by Murai~\cite{Murai2008:BettiNumbersSquarefreeStronglyColorStable}, this original assertion is incorrect if $\Delta$ is not pure: Murai gave a non-pure counter-example to the original assertion, explained why the proof of \cite[Thm. 5.7]{BabsonNovik2006:FaceNumerbsNongenericInitialIdeals} requires the assumption that $\Delta$ is pure, and gave a different proof in the pure case; see \cite[Prop. 4.2]{Murai2008:BettiNumbersSquarefreeStronglyColorStable}. Note that Theorem \ref{thm:PureColorShiftedBalancedBetti}, as we have stated here, includes this correction.

Although Theorem \ref{thm:PureColorShiftedBalancedBetti} is not true when $\Delta$ is not pure, the formula for the top-dimensional Betti number still holds in the non-pure case (see Corollary \ref{cor:NonpureColorShiftedBalancedTopBetti} below); its proof is based on the simple observation that if $\Gamma$ is the pure subcomplex of $\Delta$ generated by $\mathcal{F}_d(\Delta)$, then both $\Delta$ and $\Gamma$ have the same $d$-chains, and hence the same $d$-cycles, which implies that $\widetilde{H}_d(\Delta) \cong \widetilde{H}_d(\Gamma)$. The following two corollaries are immediate consequences of this observation.

\begin{corollary}\label{cor:SameTopDimFacesSameTopBetti}
If $\Delta, \Delta'$ are simplicial $d$-complexes such that $\mathcal{F}_d(\Delta) = \mathcal{F}_d(\Delta')$, then $\beta_d(\Delta) = \beta_d(\Delta')$.
\end{corollary}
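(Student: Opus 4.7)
The plan is to make precise the observation that is sketched immediately before the statement: namely that for a $d$-complex the top reduced homology is nothing but the kernel of the top boundary map, and this kernel is already determined by the set of $d$-faces.

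First I would fix the ambient ground set and write the simplicial chain complex of $\Delta$ with coefficients in $\Bbbk$. Since $\dim\Delta = d$, there are no $(d+1)$-chains, so
\[
\widetilde{H}_d(\Delta) \;=\; \Ker\bigl(\partial_d\colon C_d(\Delta)\to C_{d-1}(\Delta)\bigr),
\]
and similarly for $\Delta'$. The key point is that $C_d(\Delta)$ is the free $\Bbbk$-vector space on $\mathcal{F}_d(\Delta)$, the boundary of each $d$-face $F$ is the usual signed sum of its $(d-1)$-dimensional subfaces, and all those subfaces automatically lie in $\Delta$ by closure under inclusion. In particular, the matrix of $\partial_d$ in the basis $\mathcal{F}_d(\Delta)$ of $C_d(\Delta)$ and the basis $\mathcal{F}_{d-1}(\Delta)$ of $C_{d-1}(\Delta)$ only records which $(d-1)$-subsets of each $F\in\mathcal{F}_d(\Delta)$ appear, with what sign — data that is intrinsic to $\mathcal{F}_d(\Delta)$ and independent of whether $\Delta$ contains additional ``free'' faces of lower dimension.

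To turn this into a comparison of $\Delta$ and $\Delta'$, I would introduce the pure subcomplex $\Gamma := \langle \mathcal{F}_d(\Delta)\rangle = \langle \mathcal{F}_d(\Delta')\rangle$, which is contained in both $\Delta$ and $\Delta'$ and satisfies $\mathcal{F}_d(\Gamma)=\mathcal{F}_d(\Delta)=\mathcal{F}_d(\Delta')$. For each of the inclusions $\Gamma\hookrightarrow\Delta$ and $\Gamma\hookrightarrow\Delta'$, the induced map on $C_d$ is an equality of vector spaces, and the induced map on $C_{d-1}$ is injective; a cycle $z\in C_d(\Delta)$ thus pulls back to the same formal sum viewed in $C_d(\Gamma)$, and its boundary vanishes in $C_{d-1}(\Gamma)$ iff it vanishes in $C_{d-1}(\Delta)$. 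Hence the inclusions induce isomorphisms
\[
\widetilde{H}_d(\Gamma)\;\xrightarrow{\;\cong\;}\;\widetilde{H}_d(\Delta)
\qquad\text{and}\qquad
\widetilde{H}_d(\Gamma)\;\xrightarrow{\;\cong\;}\;\widetilde{H}_d(\Delta'),
\]
so $\widetilde{H}_d(\Delta)\cong \widetilde{H}_d(\Delta')$ and in particular $\beta_d(\Delta)=\beta_d(\Delta')$.

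There is no real obstacle here — the argument is a direct unpacking of the definition of simplicial homology in the top degree. The only thing that requires a moment of care is the second paragraph, verifying that enlarging a pure $d$-complex by adding lower-dimensional faces changes neither $C_d$ nor the kernel of $\partial_d$; once that is stated cleanly, the corollary is immediate.
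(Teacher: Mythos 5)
Your argument is correct and is precisely the paper's intended proof: the paper's own sketch preceding the corollary introduces the same pure subcomplex $\Gamma = \langle \mathcal{F}_d(\Delta)\rangle$ and observes that $\Delta$ and $\Gamma$ share the same $d$-chains and hence the same $d$-cycles, giving $\widetilde{H}_d(\Delta)\cong\widetilde{H}_d(\Gamma)$. You have simply written out that observation carefully, with no deviation in approach.
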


\begin{corollary}\label{cor:NonpureColorShiftedBalancedTopBetti}
If $(\Delta, \pi)$ is a (not necessarily pure) color-shifted balanced $(d-1)$-complex, then $\beta_{d-1}(\Delta)$ satisfies \eqref{eqn:TopBettiNumber}.
\end{corollary}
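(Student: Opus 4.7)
The plan is to reduce the non-pure case to the pure case already covered by Theorem \ref{thm:PureColorShiftedBalancedBetti}, using Corollary \ref{cor:SameTopDimFacesSameTopBetti} as the bridge. Let $\Gamma := \langle \mathcal{F}_{d-1}(\Delta) \rangle$, the pure subcomplex of $\Delta$ generated by its top-dimensional faces. By construction $\mathcal{F}_{d-1}(\Gamma) = \mathcal{F}_{d-1}(\Delta)$, so Corollary \ref{cor:SameTopDimFacesSameTopBetti} gives $\beta_{d-1}(\Gamma) = \beta_{d-1}(\Delta)$, and the right-hand side of \eqref{eqn:TopBettiNumber} is the same whether computed for $\Gamma$ or for $\Delta$. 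Hence it suffices to prove that $(\Gamma,\pi)$ is itself a pure color-shifted balanced $(d-1)$-complex, and then invoke Theorem \ref{thm:PureColorShiftedBalancedBetti} directly.

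Purity is automatic from the definition of $\Gamma$, and balancedness with respect to $\pi$ is inherited from $\Delta$ since $\Gamma \subseteq \Delta$. The substantive point is color-shiftedness of $(\Gamma,\pi)$. I would verify it as follows: take any $F \in \Gamma$ with $u_{i,j} \in F$ and $u_{i,j'} \notin F$ for some $1 \leq j' < j$, and choose a facet $G \in \mathcal{F}_{d-1}(\Delta)$ with $F \subseteq G$. Then $u_{i,j} \in G$, and because $(G,\pi)$ is balanced we must have $u_{i,j'} \notin G$ (otherwise $|G \cap U_i| \geq 2$). Applying the color-shifting property of $\Delta$ to $G$ produces $G' := (G \setminus \{u_{i,j}\}) \cup \{u_{i,j'}\} \in \Delta$, which is still a $(d-1)$-face and therefore a facet of $\Gamma$. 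Since $(F \setminus \{u_{i,j}\}) \cup \{u_{i,j'}\} \subseteq G'$, this modified face lies in $\Gamma$, as required.

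The key (and only nontrivial) step is this lifting argument: a color-shift applied to an arbitrary face of $\Gamma$ must be realized inside a top-dimensional face, and balancedness is precisely what prevents the would-be obstruction ($G$ already containing $u_{i,j'}$). Once $(\Gamma,\pi)$ is confirmed to be a pure color-shifted balanced $(d-1)$-complex, Theorem \ref{thm:PureColorShiftedBalancedBetti} gives
\[
\beta_{d-1}(\Gamma) = |\{F \in \mathcal{F}_{d-1}(\Gamma) : u_{i,1} \notin F \text{ for all } i \in [d]\}|,
\]
and substituting $\mathcal{F}_{d-1}(\Gamma) = \mathcal{F}_{d-1}(\Delta)$ and $\beta_{d-1}(\Gamma) = \beta_{d-1}(\Delta)$ yields \eqref{eqn:TopBettiNumber} for the (possibly non-pure) complex $\Delta$. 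I expect no surprises beyond the short balancedness check, since the whole point of passing from $\Delta$ to $\Gamma$ is to isolate the top-dimensional chain complex, which Corollary \ref{cor:SameTopDimFacesSameTopBetti} already tells us is insensitive to the discarded lower-dimensional facets.
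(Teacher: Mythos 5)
Your proof is correct and takes exactly the paper's approach: pass to the pure subcomplex $\Gamma$ generated by the top-dimensional faces, invoke Corollary \ref{cor:SameTopDimFacesSameTopBetti} to transfer the Betti number, and apply the pure case (Theorem \ref{thm:PureColorShiftedBalancedBetti}) to $\Gamma$. The one detail you spell out that the paper leaves implicit is that $(\Gamma,\pi)$ remains color-shifted; your lifting argument through a facet $G$, together with the observation that balancedness forces $u_{i,j'}\notin G$, is a correct verification of that point.
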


Given a simplicial complex $\Delta$ and any finite subset $\mathcal{A} = \{a_1, \dots, a_r\} \subseteq \mathbb{N}$, we say that $\Delta$ is \textit{$\mathcal{A}$-facet-free} if every $F\in \Facet(\Delta)$ satisfies $\dim F\not\in \mathcal{A}$. Assume that $a_1 > \dots > a_r$. Let $\Gamma_0 = \Delta$, and iteratively define $\Gamma_i = \Gamma_{i-1}\backslash \big(\mathcal{F}_{a_i}(\Gamma_{i-1}) \cap \Facet(\Gamma_{i-1})\big)$ for $i = 1, \dots, r$ (in this order). The final simplicial complex $\Gamma_r$ we get shall be called the \textit{$\mathcal{A}$-facet-free reduction} of $\Delta$. Note that by construction, $\Gamma_r$ is $\mathcal{A}$-facet-free, and $\mathcal{F}_k(\Gamma_r) = \mathcal{F}_k(\Delta)$ for all $k\in \mathbb{N}\backslash\mathcal{A}$. The following corollary is an immediate consequence of Corollary \ref{cor:SameTopDimFacesSameTopBetti}.

\begin{corollary}\label{cor:FacetFreeReductionTopBetti}
Let $\Delta$ be a simplicial complex of dimension $\leq d$. If $\Gamma$ is the $\mathcal{A}$-facet-free reduction of $\Delta$ for some finite $\mathcal{A} \subseteq \mathbb{N}$ satisfying $d\not\in \mathcal{A}$, then $\beta_{d}(\Gamma) = \beta_{d}(\Delta)$.
\end{corollary}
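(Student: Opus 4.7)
The plan is to reduce the claim to Corollary \ref{cor:SameTopDimFacesSameTopBetti} by arguing that the $\mathcal{A}$-facet-free reduction does not alter the set of $d$-dimensional faces. First I would unwind the definition: writing $\mathcal{A} = \{a_1, \dots, a_r\}$ with $a_1 > \cdots > a_r$, follow the iterative construction $\Gamma_0 = \Delta \to \Gamma_1 \to \cdots \to \Gamma_r = \Gamma$, where at step $i$ one deletes from $\Gamma_{i-1}$ exactly those faces that are both facets and of dimension $a_i$.

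The key observation is that a facet is maximal by definition, so deleting it from a simplicial complex removes only that single face while preserving closure under taking subsets. In particular, the passage $\Gamma_{i-1} \rightsquigarrow \Gamma_i$ can only change $\mathcal{F}_{a_i}(\Gamma_{i-1})$, so $\mathcal{F}_k(\Gamma_i) = \mathcal{F}_k(\Gamma_{i-1})$ whenever $k \neq a_i$. By hypothesis $d \notin \mathcal{A}$, so taking $k = d$ at each step yields $\mathcal{F}_d(\Gamma_i) = \mathcal{F}_d(\Gamma_{i-1})$ for all $i \in [r]$. (If some $a_i$ exceeds $d$, the step is vacuous, since $\Gamma_{i-1}$ inherits dimension at most $d$ from $\Delta$.) Iterating gives $\mathcal{F}_d(\Gamma) = \mathcal{F}_d(\Delta)$, and Corollary \ref{cor:SameTopDimFacesSameTopBetti} then immediately yields $\beta_d(\Gamma) = \beta_d(\Delta)$. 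There is no substantive obstacle here; the statement is essentially a bookkeeping consequence of the fact that the top-dimensional chain group, and hence the top-dimensional cycle space, depends only on $\mathcal{F}_d$.
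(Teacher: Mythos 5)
Your proof is correct and follows the same route as the paper: both observe that the facet-free reduction preserves $\mathcal{F}_k$ for all $k \notin \mathcal{A}$ (in particular $\mathcal{F}_d$, since $d \notin \mathcal{A}$) and then invoke Corollary \ref{cor:SameTopDimFacesSameTopBetti}. The paper states this as an immediate consequence of the construction; your proof simply spells out the step-by-step bookkeeping.
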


Recall that $\Pi_d = (\Pi_{d,1}, \dots, \Pi_{d,d})$, where $\Pi_{d,i} := \{k\in \mathbb{P}: k \equiv i\pmod{d}\}$ for each $i\in [d]$.

\begin{theorem}\label{thm:RevlexComplexMaximizesBettiNum}
Let $1\leq k\leq d$ be integers, and let $\mathcal{B}_N^{k,d}$ be the set of all balanced $(d-1)$-complexes $\Delta$ satisfying $f_{k-1}(\Delta) = N>0$. If $N$ has the $(k,d)$-canonical representation
\[N = \binom{N_{d}}{k}_{\!d} + \binom{N_{d-1}}{k-1}_{\!d-1} + \dots + \binom{N_{d-s}}{k-s}_{\!d-s},\]
then for every $\Delta \in \mathcal{B}_N^{k,d}$,
\begin{equation}\label{eqn:IneqTopBettiNum}
\beta_{d-1}(\Delta) \leq \binom{N_{d}-d}{d}_{\!d} + \binom{N_{d-1}-(d-1)}{d-1}_{\!d-1} + \dots + \binom{N_{d-s} - (d-s)}{d-s}_{\!d-s}.
\end{equation}
Furthermore, if $\Delta \in \mathcal{B}_N^{k,d}$ is a revlex balanced complex with respect to $\Pi_d$ that satisfies
\[f_{d-1}(\Delta) = \binom{N_{d}}{d}_{\!d} + \binom{N_{d-1}}{d-1}_{\!d-1} + \dots + \binom{N_{d-s}}{d-s}_{\!d-s},\]
then equality holds in \eqref{eqn:IneqTopBettiNum}.
\end{theorem}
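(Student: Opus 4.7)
The plan is to reduce $\Delta$ to a color-shifted balanced complex via shifting, apply the Betti formula of Corollary~\ref{cor:NonpureColorShiftedBalancedTopBetti}, and bound the resulting combinatorial count via compression to the revlex case, where the canonical representation yields an explicit count. The equality case follows from the same computation applied directly to the revlex complex.

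\textbf{Step 1 (Color-shifting).} Starting from $\Delta\in\mathcal{B}_N^{k,d}$, I apply a color-shifting operation (either algebraic, via a generic change of coordinates on the Stanley--Reisner ring, or combinatorial, by iteratively swapping $u_{i,j}$ with $u_{i,j'}$ for $j'<j$ within faces when the result is not already present) to obtain a color-shifted balanced complex $\Delta^c$ with $f(\Delta^c)=f(\Delta)$ and $\beta_{d-1}(\Delta^c)\geq\beta_{d-1}(\Delta)$. The latter inequality is the main technical obstacle: establishing it requires either invoking deformation-to-initial-ideal arguments or a careful local homological analysis of each elementary shift's effect on the top-cycle space. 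By Corollary~\ref{cor:NonpureColorShiftedBalancedTopBetti}, $\beta_{d-1}(\Delta^c)=B$, where $B:=|\{F\in\mathcal{F}_{d-1}(\Delta^c):u_{i,1}\notin F\text{ for all }i\in[d]\}|$; so it suffices to bound $B$ by the right-hand side of \eqref{eqn:IneqTopBettiNum}.

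\textbf{Step 2 (Bounding $B$).} The set $\mathcal{F}_{d-1}(\Delta^c)\subseteq\binom{\Pi_d}{d}$ is closed under coordinate-wise downward shifts (the color-shift property restricted to $(d-1)$-faces). A Kruskal--Katona-type compression argument shows that among color-shift-closed subsets of $\binom{\Pi_d}{d}$ of a fixed size $L$, the revlex-initial $L$-subset maximizes the count of elements with all coordinates $\geq 2$, and this maximum equals $L_{-}^{(d,d)}$; hence $B\leq L_{-}^{(d,d)}$. By Theorem~\ref{thm:FFK} applied to $\Delta^c$ and iterated via Lemma~\ref{lem:iteratedDifferentialOperatorSecond}, $L:=f_{d-1}(\Delta^c)\leq M$, where $M=\binom{N_d}{d}_{\!d}+\cdots+\binom{N_{d-s}}{d-s}_{\!d-s}$. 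By monotonicity of $L\mapsto L_{-}^{(d,d)}$ (a standard consequence of Lemma~\ref{lem:CanonRep+d}), $B\leq M_{-}^{(d,d)}$, which by \eqref{eqn:N-CanonRep} (together with the convention $\binom{n}{k}_{\!r}=0$ for $n<k$) equals the right-hand side of \eqref{eqn:IneqTopBettiNum}.

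\textbf{Step 3 (Equality).} Suppose $\Delta$ is revlex with respect to $\Pi_d$ and $f_{d-1}(\Delta)=M$. Then $\Delta$ is color-shifted (since revlex implies color-shifted), so Corollary~\ref{cor:NonpureColorShiftedBalancedTopBetti} applies directly. The coordinate shift $\phi:u_{i,j}\mapsto u_{i,j-1}$ (defined for $j\geq 2$) shifts each ground-set position down by $d$, hence is revlex-preserving on $\binom{\Pi_d}{d}$. Therefore the $(d-1)$-faces of $\Delta$ avoiding all $u_{i,1}$ are exactly the $\phi$-preimages of a revlex-initial segment of $\binom{\Pi_d}{d}$, which has size $M_{-}^{(d,d)}$; thus equality holds.
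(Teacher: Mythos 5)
Your overall strategy matches the paper's: color-shift $\Delta$, invoke the Betti formula for color-shifted balanced complexes (Corollary~\ref{cor:NonpureColorShiftedBalancedTopBetti}), and bound the resulting count $B$ of ``interior'' facets. Step~1 is fine modulo the citation you need (the paper gets $\beta_{d-1}(\Delta)\le\beta_{d-1}(\widetilde\Delta_{\prec}(\Delta))$ from Murai's result building on Hochster's formula --- the purely combinatorial ``iteratively swap $u_{i,j}\leftrightarrow u_{i,j'}$'' variant of shifting is \emph{not} known to preserve this inequality). Step~3 is the same as the paper's simple counting argument. The substantive gap is in Step~2.

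Step~2 asserts, via a ``Kruskal--Katona-type compression argument,'' that among color-shift-closed subfamilies of $\binom{\Pi_d}{d}$ of fixed size $L$, the revlex-initial family maximizes the number of members avoiding all $u_{i,1}$, with maximum $L_-^{(d,d)}$. The conclusion is correct, but the compression framing does not obviously work: a single color-shift compression move (replacing $u_{i,j}$ by $u_{i,j'}$ with $j'<j$, in particular $j'=1$) can send an interior face to a non-interior one and thereby \emph{decrease} the interior count, so there is no monotone per-step argument to iterate. The mechanism the paper uses instead --- and which you should supply --- is to pass from $\mathcal{F}_{d-1}(\Delta^c)$ to the collection $\widehat\Delta:=\{F\cap\widehat U:F\in\mathcal{F}_{d-1}(\Delta^c)\}$, where $\widehat U=\{u_{i,j}:j\ge 2\}$, prove that $\widehat\Delta$ is a genuine simplicial complex (this is exactly where the color-shift hypothesis is used, and it is not automatic), note that the natural bijection $\phi:\widehat\Delta\to\mathcal{F}_{d-1}(\Delta^c)$ gives $\sum_i f_i(\widehat\Delta)=L$ while $B=f_{d-1}(\widehat\Delta)$, and then apply Theorem~\ref{thm:FFK} to the $d$-colorable complex $\widehat\Delta$ together with Lemmas~\ref{prop:SumTuranCoefficients} and~\ref{lem:CanonRep+d}\ref{item:CanonicalRepCompare} to deduce $B_+^{(d,d)}\le L$, i.e.\ $B\le L_-^{(d,d)}$. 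You also need to treat the degenerate case $f_{d-1}(\widehat\Delta)=0$ separately, since $L_-^{(d,d)}$ in your notation presupposes a canonical representation of a positive integer. In short: right target, but the combinatorial core of the argument is missing and the proposed proof mechanism for it (face-by-face compression) would not go through as stated.
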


\begin{proof}
First, for any balanced complex $\Gamma$, it follows from \cite[Thm. 0.1]{Murai2008:BettiNumbersSquarefreeStronglyColorStable} (cf. \cite[Cor. 3.4]{Murai2008:BettiNumbersSquarefreeStronglyColorStable} and Hochster's formula~\cite[Thm. 5.5.1]{book:BrunsHerzog:CMRings}) that $\beta_i(\Gamma) \leq \beta_i(\widetilde{\Delta}_{\prec}(\Gamma))$ for all $i\in \mathbb{N}$, where $\widetilde{\Delta}_{\prec}(\Gamma)$ denotes the colored algebraic shifting of $\Gamma$; see \cite{BabsonNovik2006:FaceNumerbsNongenericInitialIdeals} for a precise definition of colored algebraic shifting. In particular, \cite[Thm. 5.6]{BabsonNovik2006:FaceNumerbsNongenericInitialIdeals} tells us that $\widetilde{\Delta}_{\prec}(\Gamma)$ is color-shifted and has the same $f$-vector as $\Gamma$.

By Frankl--F\"{u}redi--Kalai's theorem (Theorem \ref{thm:FFK}) and Lemma \ref{lem:iteratedDifferentialOperatorSecond}, we have
\[f_{d-1}(\Delta) \leq \binom{N_{d}}{d}_{\!d} + \binom{N_{d-1}}{d-1}_{\!d-1} + \dots + \binom{N_{d-s}}{d-s}_{\!d-s}.\]
This means that both assertions for arbitrary $k\in [d]$ follows from the special case $k=d$. Henceforth, we shall assume that $k=d$.

Consider an arbitrary $\Delta \in \mathcal{B}_N^{d,d}$, and assume that $\Delta$ is balanced with respect to $\pi = (U_1, \dots, U_d)$. By taking its colored algebraic shifting if necessary, we can assume without loss of generality that $(\Delta, \pi)$ is color-shifted.

Define $\widehat{U} := \{u_{i,j} \in U: j\neq 1\} \subseteq U$, and define $\widehat{\Delta} := \{F\cap \widehat{U}: F\in \mathcal{F}_{d-1}(\Delta)\}$. We claim that $\widehat{\Delta}$ is a subcomplex of $\Delta$. First, note that there is a bijection $\phi: \widehat{\Delta} \to \mathcal{F}_{d-1}(\Delta)$ given by
\[\widehat{F} \mapsto \widehat{F} \cup \bigcup_{\substack{i\in [d]: \widehat{F} \cap U_i = \emptyset}} u_{i,1}.\]
Consider any pair $(F', F) \in \Delta \times \Delta$ satisfying $F'\subseteq F$ and $F\in \widehat{\Delta}$. Suppose $\dim F' = q-1$, and let $u_{i_1,j_1}, \dots, u_{i_{d-q}, j_{d-q}}$ be the $d-q$ uniquely determined vertices in $\mathcal{V}(\phi(F))\backslash \mathcal{V}(F')$.
By assumption, $\Delta$ is $d$-colored with respect to $\pi$, so $i_1, \dots, i_{d-q}$ must be distinct. Since $\Delta$ is color-shifted, we then infer that $F'\cup \bigcup_{t\in [d-q]} u_{i_t,1} \in \mathcal{F}_{d-1}(\Delta)$, hence $F'\in \widehat{\Delta}$ by definition. Since this holds for all possible pairs $(F',F)$, it follows that $\widehat{\Delta}$ is a subcomplex of $\Delta$ as claimed.

We now split into two cases (i): $f_{d-1}(\widehat{\Delta}) \geq 1$; and (ii): $f_{d-1}(\widehat{\Delta}) = 0$. First, consider case (i), and suppose that
\begin{equation}\label{eqn:CanonicalRepNPrime}
f_{d-1}(\widehat{\Delta}) = \binom{N_d'}{d}_{\!d} + \binom{N_{d-1}'}{d-1}_{\!d-1} + \dots + \binom{N_{m}'}{m}_{\!m}
\end{equation}
is the $(d,d)$-canonical representation of $f_{d-1}(\widehat{\Delta})$. Since $\widehat{\Delta}$ is $d$-colorable, it follows from Lemma \ref{lem:iteratedDifferentialOperator} and Theorem \ref{thm:FFK} that
\begin{equation}\label{eqn:lowerFacesIneq}
f_{d-1-j}(\widehat{\Delta}) \geq \binom{N_d'}{d-j}_{\!d} + \binom{N_{d-1}'}{d-1-j}_{\!d-1} + \dots + \binom{N_{m}'}{m-j}_{\!m}
\end{equation}
for all $j\in [d-1]$.

Note that the bijection $\phi$ implies $N = \sum_{i=-1}^{d-1} f_i(\widehat{\Delta})$, so it follows from \eqref{eqn:lowerFacesIneq} that
\begin{equation*}
\sum_{j=m}^{d} \bigg[\binom{N_j'}{0}_{\!j} + \binom{N_j'}{1}_{\!j} + \dots + \binom{N_j'}{j}_{\!j}\bigg] \leq N.
\end{equation*}
Consequently, by Proposition \ref{prop:SumTuranCoefficients}, we infer that
\begin{equation}\label{eqn:NprimeVersusN}
L:=\sum_{j=m}^d \binom{N_j'+j}{j}_{\!j} \leq N = \sum_{j=d-s}^d \binom{N_j}{j}_{\!j}.
\end{equation}
Apply Lemma \ref{lem:CanonRep+d} to the two $(d,d)$-canonical representations in \eqref{eqn:NprimeVersusN} to conclude that $L_-^{(d,d)}\leq N_-^{(d,d)}$. Now, by Corollary \ref{cor:NonpureColorShiftedBalancedTopBetti}, $\beta_{d-1}(\Delta) = f_{d-1}(\widehat{\Delta})=L_-^{(d,d)}$, hence
\eqref{eqn:IneqTopBettiNum} follows.

Next, let $\Delta$ be revlex with respect to $\pi = \Pi_d$, i.e. we identify each subset $U_i$ with $\Pi_{d,i} = \{k\in \mathbb{P}: k \equiv i\pmod{d}\}$. Then by definition, $\widehat{\Delta}$ is revlex with respect to $(\Pi_{d,1}\backslash\{1\}, \dots, \Pi_{d,d}\backslash\{d\})$, therefore a simple counting argument yields $f_{d-1}(\widehat{\Delta}) = \sum_{j=d-s}^d \binom{N_j-j}{j}_{\!j}$ as desired.

Finally, we turn to case (ii). Note that \eqref{eqn:IneqTopBettiNum} is trivially true. Note further that $f_{d-1}(\widehat{\Delta}) = 0$ forces $N < \binom{2d}{d}_{\!d}$. Indeed, if instead there exists some $G = u_{1,j_1} \cup \dots u_{d,j_d} \in \mathcal{F}_{d-1}(\widehat{\Delta})$, then the definition of $\widehat{\Delta}$ implies that $j_t \geq 2$ for all $t\in [d]$. Since $\Delta$ is color-shifted by assumption, we would then get that
\[\Big\{u_{1,j_1'} \cup \dots \cup u_{d,j_d'}: j_t' \in \{1,2\}\text{ for each }t\in [d]\Big\}\]
is a set of $2^d = \binom{2d}{d}_{\!d}$ distinct $(d-1)$-faces contained in $\Delta$. Consequently, we infer our assertion for balanced revlex complexes in case (ii) from Corollary \ref{cor:NonpureColorShiftedBalancedTopBetti}, by noting that all revlex $(d-1)$-balanced complexes $\Delta$ satisfying $f_{d-1}(\Delta) < \binom{2d}{d}_{\!d}$ would by definition not have any $(d-1)$-faces in $\widehat{\Delta}$, i.e. $\beta_{d-1}(\Delta) = f_{d-1}(\widehat{\Delta}) = 0$ in this revlex case.
\end{proof}

\section{Homology of flag complexes}\label{sec:HomologyFlagComplexes}
\begin{lemma}\label{lem:standardMayerViet}
Let $v$ be a vertex in a simplicial complex $\Delta$. Then $\beta_k(\Delta) \leq \beta_k(\Ast_{\Delta}(v)) + \beta_{k-1}(\Lk_{\Delta}(v))$ for all $k\geq 0$.
\end{lemma}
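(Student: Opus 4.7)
This is the standard reduced Mayer--Vietoris inequality for a vertex, so the plan is to exhibit a two-piece cover of $\Delta$ whose intersection is the link, and then read off the inequality from the long exact sequence. Concretely, I would first decompose
\[
\Delta = \Ast_{\Delta}(v) \cup \overline{\St}_{\Delta}(v), \qquad \Ast_{\Delta}(v) \cap \overline{\St}_{\Delta}(v) = \Lk_{\Delta}(v),
\]
where the equalities are immediate from the definitions recalled in Section~\ref{subsec:SimplicialComplexes}: every face of $\Delta$ either avoids $v$ (and so lies in $\Ast_{\Delta}(v)$) or contains $v$ (and so lies in $\overline{\St}_{\Delta}(v)$), while a face that both avoids $v$ and is contained in some face containing $v$ is by definition a face of $\Lk_{\Delta}(v)$.

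Next I would use that $\overline{\St}_{\Delta}(v) = \langle v\rangle \ast \Lk_{\Delta}(v)$ is a cone on $\Lk_{\Delta}(v)$ with conepoint $v$, hence contractible, so its reduced homology vanishes in every degree. Since all three subcomplexes are non-empty (they all contain $\emptyset$), the reduced Mayer--Vietoris sequence applies and yields the long exact sequence
\[
\cdots \to \widetilde{H}_k(\Lk_{\Delta}(v)) \to \widetilde{H}_k(\Ast_{\Delta}(v)) \oplus \widetilde{H}_k(\overline{\St}_{\Delta}(v)) \to \widetilde{H}_k(\Delta) \to \widetilde{H}_{k-1}(\Lk_{\Delta}(v)) \to \cdots
\]

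Finally, plugging in $\widetilde{H}_k(\overline{\St}_{\Delta}(v)) = 0$ leaves the three-term exact piece
\[
\widetilde{H}_k(\Ast_{\Delta}(v)) \to \widetilde{H}_k(\Delta) \to \widetilde{H}_{k-1}(\Lk_{\Delta}(v)),
\]
so that $\widetilde{H}_k(\Delta)$ is an extension of a subspace of $\widetilde{H}_{k-1}(\Lk_{\Delta}(v))$ by a quotient of $\widetilde{H}_k(\Ast_{\Delta}(v))$, and the dimension inequality $\beta_k(\Delta) \leq \beta_k(\Ast_{\Delta}(v)) + \beta_{k-1}(\Lk_{\Delta}(v))$ follows. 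There is no real obstacle here; the only points to handle carefully are the base case $k=0$ (where one uses that $\Lk_{\Delta}(v)$ is non-empty so that $\widetilde{H}_{-1}(\Lk_{\Delta}(v)) = 0$ and the sequence terminates cleanly), and citing reduced Mayer--Vietoris in the augmented simplicial setting rather than the unreduced topological one.
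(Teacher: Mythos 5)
Your proof is correct and takes essentially the same route as the paper: decompose $\Delta = \Ast_{\Delta}(v) \cup \overline{\St}_{\Delta}(v)$ with intersection $\Lk_{\Delta}(v)$, observe that the closed star is a contractible cone, and read the inequality off exactness at $\widetilde{H}_k(\Delta)$ in the reduced Mayer--Vietoris sequence. (Your remark about the $k=0$ case is harmless but not needed; the dimension count from exactness already gives the inequality there, since $\beta_{-1}(\Lk_{\Delta}(v))=0$ for the non-empty link.)
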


\begin{proof}
Note that $\overline{\St}_{\Delta}(v)$ is contractible, so it has trivial reduced homology. Since $\Ast_{\Delta}(v) \cap \overline{\St}_{\Delta}(v) = \Lk_{\Delta}(v)$, the Mayer--Vietoris sequence for the decomposition $\Delta = \Ast_{\Delta}(v) \cup \overline{\St}_{\Delta}(v)$ thus yields the exact sequence
\[\cdots \longrightarrow \widetilde{H}_k(\Ast_{\Delta}(v)) \overset{\partial}{\longrightarrow} \widetilde{H}_k(\Delta) \overset{\delta}{\longrightarrow} \widetilde{H}_{k-1}(\Lk_{\Delta}(v)) \longrightarrow \cdots,\]
so by exactness at $\widetilde{H}_k(\Delta)$,
\[\beta_k(\Delta) = \dim_{\Bbbk}\Ker \delta + \dim_{\Bbbk}\Img \delta = \dim_{\Bbbk}\Img \partial + \dim_{\Bbbk}\Img \delta \leq \beta_k(\Ast_{\Delta}(v)) + \beta_{k-1}(\Lk_{\Delta}(v))\]
for all $k\geq 0$.
\end{proof}

\begin{theorem}\label{TopBettiIneqRevlex}
Let $\Delta$ be a flag $d$-complex. Then the minimal (w.r.t. inclusion) revlex $(d+1)$-colorable complex $\Gamma$ that satisfies $f_d(\Gamma) = f_d(\Delta)$ must  satisfy $\beta_d(\Gamma) \geq \beta_d(\Delta)$.
\end{theorem}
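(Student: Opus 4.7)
The plan is a double induction on the pair $(d, |\mathcal{V}(\Delta)|)$ in lexicographic order. The base case $d=0$ is immediate: a $0$-dimensional flag complex is a set of $f_0(\Delta)$ disjoint vertices with $\beta_0 = f_0(\Delta)-1$, matched exactly by the minimal revlex $1$-colorable complex on $f_0(\Delta)$ vertices. For the inductive step, pick a vertex $v \in \mathcal{V}(\Delta)$ and set $A := \Ast_\Delta(v)$ and $L := \Lk_\Delta(v)$. Both are flag complexes ($A$ being the induced subcomplex $\Delta[\mathcal{V}(\Delta)\setminus\{v\}]$ and $L$ a flag $(d{-}1)$-complex), and partitioning $\mathcal{F}_d(\Delta)$ by whether a face contains $v$ gives $f_d(\Delta) = f_d(A) + f_{d-1}(L)$. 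Lemma~\ref{lem:standardMayerViet} then yields
\[\beta_d(\Delta) \leq \beta_d(A) + \beta_{d-1}(L).\]

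Applying the inductive hypothesis to $A$ (same dimension, fewer vertices) produces the minimal revlex $(d{+}1)$-colorable complex $\Gamma_A$ with $f_d(\Gamma_A) = f_d(A)$ and $\beta_d(\Gamma_A) \geq \beta_d(A)$. Applying it to $L$ (smaller dimension) produces the minimal revlex $d$-colorable complex $\Gamma_L$ with $f_{d-1}(\Gamma_L) = f_{d-1}(L)$ and $\beta_{d-1}(\Gamma_L) \geq \beta_{d-1}(L)$. The proof then reduces to establishing the combinatorial inequality
\[\beta_d(\Gamma_A) + \beta_{d-1}(\Gamma_L) \leq \beta_d(\Gamma),\]
where $\Gamma$ is the minimal revlex $(d{+}1)$-colorable complex with $f_d(\Gamma) = f_d(\Gamma_A) + f_{d-1}(\Gamma_L)$.

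This final inequality is where the main obstacle lies. The approach is to invoke Corollary~\ref{cor:NonpureColorShiftedBalancedTopBetti}, which identifies the top Betti number of a color-shifted (in particular, revlex) balanced complex with the number of top-dimensional faces that avoid the set $W = \{u_{i,1}\}_i$ of first-index vertices in each color. The inequality then becomes a purely combinatorial statement comparing counts of ``avoiding'' top faces across revlex prefixes of sizes $N_1$, $N_2$, and $N_1+N_2$. In favorable cases (illustrated, e.g., by $\Delta$ a $6$-cycle), $\Gamma$ decomposes cleanly as $\Gamma_A \cup (u_{\mathrm{new}} \ast \Gamma_L)$ for a freshly introduced vertex $u_{\mathrm{new}}$ in the last color, giving a bijective proof of equality. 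In general this cone decomposition may fail, and one must analyze the $(d{+}1,d{+}1)$-canonical representations of $f_d(\Gamma_A)$, $f_{d-1}(\Gamma_L)$, and their sum via Lemma~\ref{lem:CanonRep+d} (particularly parts (iii)--(v), which describe the revlex interpretation of canonical representations and the monotonicity of $N_-^{(k,r)}$). A secondary issue requiring care is the choice of $v$: different vertices produce different splits $f_d(\Delta) = f_d(A) + f_{d-1}(L)$, and it may be necessary to choose $v$ strategically (e.g., of appropriate star size) to make the resulting combinatorial inequality tractable, alongside handling boundary cases such as $\dim A < d$ or $f_{d-1}(L) = 0$.
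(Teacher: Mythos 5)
The proposal has a genuine gap, and the reduction it proposes is in fact unsalvageable without major changes. You reduce the theorem to the combinatorial claim that if $\Gamma_A$ is the minimal revlex $(d{+}1)$-colorable complex with $N_1$ top faces, $\Gamma_L$ the minimal revlex $d$-colorable complex with $N_2$ top faces, and $\Gamma$ the minimal revlex $(d{+}1)$-colorable complex with $N_1+N_2$ top faces, then $\beta_d(\Gamma_A)+\beta_{d-1}(\Gamma_L)\le\beta_d(\Gamma)$. This is false for general $(N_1,N_2)$. Take $d=1$, $N_1=4$, $N_2=3$. Then $\Gamma_A$ has $4=\binom{4}{2}_{\!2}$ edges so $\beta_1(\Gamma_A)=\binom{2}{2}_{\!2}=1$; $\Gamma_L$ has $3$ vertices so $\beta_0(\Gamma_L)=2$; but $\Gamma$ has $7=\binom{5}{2}_{\!2}+\binom{1}{1}_{\!1}$ edges so $\beta_1(\Gamma)=\binom{3}{2}_{\!2}+\binom{0}{1}_{\!1}=2 < 1+2=3$. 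So even granting both inductive hypotheses, the inequality you are trying to chain through fails. Your own caveat that ``one must analyze the canonical representations'' and ``choose $v$ strategically'' identifies the problem without solving it: the proposal does not supply the choice of vertex or the combinatorial lemma that would rule out splits like $(4,3)$, and without that nothing is proved.

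The paper's proof takes a different route precisely to avoid this comparison of revlex prefixes of different sizes. It fixes $v_0$ to be a vertex lying in the maximal number of $d$-faces and then peels off \emph{all} vertices $v_1,\dots,v_s$ outside $\overline{\St}_\Delta(v_0)$ one at a time, writing $f_d(\Delta)=a_0+\dots+a_s$ where $a_i=f_{d-1}(\Lk_{\Delta_{i+1}}(v_i))$. The choice of $v_0$ forces $a_i\le a_0$ for all $i$, which, after applying the induction hypothesis (on $d$ only) to each link to obtain revlex $d$-colorable $\Sigma_i$, guarantees the nesting $\mathcal{F}_{d-1}(\Sigma_i)\subseteq\mathcal{F}_{d-1}(\Sigma_0)$. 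With that nesting, the fan $\Sigma=\bigcup_i(\Sigma_i*\langle u_i\rangle)$ is $(d{+}1)$-colorable, the Mayer--Vietoris sequences collapse to exact equalities $\beta_d(\Sigma)=\sum_{i\ge1}\beta_{d-1}(\Sigma_i)$, and one finishes by a single application of Theorem~\ref{thm:RevlexComplexMaximizesBettiNum} to pass from $\Sigma$ to the minimal revlex complex. Note that $\Sigma$ is not itself assumed revlex, so no arithmetic of canonical representations is needed at this stage; that arithmetic is isolated in Theorem~\ref{thm:RevlexComplexMaximizesBettiNum}, which is invoked once at the end. If you want to rescue your single-vertex-at-a-time induction, you would at minimum need to choose $v$ so that the split $(N_1,N_2)$ falls into a ``safe'' regime and prove a version of your combinatorial lemma there; the paper's approach sidesteps this entirely by never forming $\Gamma$ from $\Gamma_A$ and $\Gamma_L$.
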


\begin{proof}
We shall prove by induction on $d$. The base case $d=0$ is trivially true, so assume that $d\geq 1$. Let $v_0 \in \mathcal{V}(\Delta)$ be any vertex contained in the most number of $d$-faces in $\Delta$, and suppose that $\mathcal{V}(\Delta)\backslash \mathcal{V}(\overline{\St}_{\Delta}(v_0))$ has cardinality $s$. Choose an arbitrary linear order $v_1, \dots, v_s$ of the vertices in $\mathcal{V}(\Delta)\backslash \mathcal{V}(\overline{\St}_{\Delta}(v_0))$ (if any). Next, let $\Delta_{s+1} := \Delta$, and if $s\geq 1$, then iteratively define $\Delta_i := \Ast_{\Delta_{i+1}}(v_i)$ and $a_i := f_{d-1}(\Lk_{\Delta_{i+1}}(v_i))$ for each $i\in [s]$.

By the definition of $s$, we infer that $\mathcal{V}(\Delta_1) = \mathcal{V}(\overline{\St}_{\Delta}(v_0))$. Since $\Delta$ is a flag complex, we know that $\overline{\St}_{\Delta}(v_0)$ is an induced subcomplex of $\Delta$ (cf. \cite[Rem. 3.5]{ConstantinescuVarbaro2013:hvectorsCMFlagComplexes}), thus $\Delta_1 = \overline{\St}_{\Delta}(v_0)$. Notice that if $s=0$, then $\Delta = \overline{\St}_{\Delta}(v_0)$, which implies $\beta_d(\Delta) = 0$, so our assertion becomes trivially true. Henceforth, we shall assume that $s\geq 1$. Let $\Delta_0 := \Ast_{\Delta_1}(v_0) = \Lk_{\Delta}(v_0)$, let $a_0 := f_{d-1}(\Lk_{\Delta_1}(v_0)) = f_{d-1}(\Lk_{\Delta}(v_0))$, and note that
\[\Lk_{\Delta}(v_0) = \Delta_0 \subsetneq \Delta_1 \subsetneq \dots \subsetneq \Delta_{s+1} = \Delta.\]

For each $0\leq i\leq s$, note that $f_d(\Delta_{i+1}) - f_d(\Delta_i) = a_i$ by construction. Since $\dim \Delta = d$ implies $f_d(\Delta_0) = f_d(\Lk_{\Delta}(v_0)) = 0$, it then follows that $f_d(\Delta) = a_0 + a_1 + \dots + a_s$. Also, since $\Delta_{i+1} \subseteq \Delta$, it follows from our choice of $v_0$ that
\begin{equation}\label{eqn:UpperBoundAZero}
a_0 = f_{d-1}(\Lk_{\Delta}(v_0)) \geq f_{d-1}(\Lk_{\Delta}(v_i)) \geq f_{d-1}(\Lk_{\Delta_{i+1}}(v_i)) = a_i.
\end{equation}

Since each $\Lk_{\Delta_{i+1}}(v_i)$ is a flag complex of dimension $< d$, we infer that there exists a revlex $d$-colorable complex $\Sigma_i$ satisfying $f_{d-1}(\Sigma_i) = f_{d-1}(\Lk_{\Delta_{i+1}}(v_i)) = a_i$ and $\beta_{d-1}(\Sigma_i) \geq \beta_{d-1}(\Lk_{\Delta_{i+1}}(v_i))$; the case $a_i \geq 1$ (i.e. $\dim \Lk_{\Delta_{i+1}}(v_i) = d-1$) follows from the induction hypothesis, while the case $a_i = 0$ (hence $\beta_{d-1}(\Lk_{\Delta_{i+1}}(v_i)) = 0$) is trivial.

By Lemma \ref{lem:standardMayerViet}, $\beta_d(\Delta_{i+1}) \leq \beta_d(\Delta_i) + \beta_{d-1}(\Lk_{\Delta_{i+1}}(v_i))$ for every $i\in [s]$. Since $\Delta_1 = \overline{\St}_{\Delta}(v_0)$ is contractible, we get $\beta_d(\Delta_1) = 0$. Thus,
\begin{equation}\label{eqn:BettiDelta}
\beta_d(\Delta) = \beta_d(\Delta_{s+1}) \leq \sum_{i\in [s]} \beta_{d-1}(\Lk_{\Delta_{i+1}}(v_i)) \leq \sum_{i\in [s]} \beta_{d-1}(\Sigma_i).
\end{equation}

Without loss of generality, all $\Sigma_i$'s are defined on a common ordered partition $(U_1, \dots, U_d)$ of some common ground set $U$, where each subset $U_i$ has an arbitrarily large finite size, whose elements are linearly ordered by $u_{i,1} < u_{i,2} < \dots$. Next, let $U_{d+1}$ be another set with an arbitrarily large finite size, whose elements are linearly ordered by $u_0 < u_1 < u_2 < \dots$, and extend the ground set to the disjoint union $U' = U \sqcup U_{d+1}$. Now, define the simplicial complex
\begin{equation}\label{eqn:Sigma}
\Sigma := (\Sigma_0 * \langle u_0\rangle) \cup (\Sigma_1 * \langle u_1\rangle) \cup \dots \cup (\Sigma_s * \langle u_s\rangle).
\end{equation}
Note that $\Sigma$ is $(d+1)$-colorable with respect to the ordered partition $(U_1, \dots, U_d, U_{d+1})$ of $U'$. Note also that by construction,
\begin{equation}\label{eqn:TopFSigmaDelta}
f_d(\Sigma) = a_0 + a_1 + \dots + a_s = f_d(\Delta).
\end{equation}

We shall now compute $\beta_d(\Sigma)$. For every $0\leq j\leq s$, define $\Sigma_j' := (\Sigma_0 * \langle u_0\rangle) \cup \dots \cup (\Sigma_j * \langle u_j\rangle)$. Next, for each $i\in [s]$, let $\Sigma_i'' := \Sigma_i * \langle u_0, u_i\rangle$. Recall that \eqref{eqn:UpperBoundAZero} says $a_i = f_{d-1}(\Sigma_i) \leq f_{d-1}(\Sigma_0) = a_0$ for all $i\in [s]$. Since $\Sigma_0, \dots, \Sigma_s$ are revlex $d$-colorable complexes, it follows that
$\mathcal{F}_{d-1}(\Sigma_i) \subseteq \mathcal{F}_{d-1}(\Sigma_0)$, so we infer that $\Sigma_{i-1}' \cup \Sigma_i'' = \Sigma_i'$ and $\Sigma_{i-1}' \cap \Sigma_i'' = \Sigma_i * \langle u_0\rangle$ for all $i\in [s]$. Thus, the Mayer--Vietoris sequence for the decomposition $\Sigma_i' = \Sigma_{i-1}' \cup \Sigma_i''$ yields the exact sequence
\begin{equation}\label{eqn:MVsuspension}
\cdots \longrightarrow \widetilde{H}_d(\Sigma_i * \langle u_0\rangle) \longrightarrow \widetilde{H}_d(\Sigma_{i-1}') \oplus \widetilde{H}_d(\Sigma_i'') \longrightarrow \widetilde{H}_d(\Sigma_i') \longrightarrow \widetilde{H}_{d-1}(\Sigma_i * \langle u_0\rangle) \longrightarrow \cdots
\end{equation}
The cone $\Sigma_i * \langle u_0\rangle$ is contractible and so has trivial reduced homology. Also, $\Sigma_i''$ is a suspension over $\Sigma_i$, which yields $\widetilde{H}_d(\Sigma_i'') \cong \widetilde{H}_{d-1}(\Sigma_i)$. Consequently, \eqref{eqn:MVsuspension} implies that $\beta_d(\Sigma_i') = \beta_d(\Sigma_{i-1}') + \beta_{d-1}(\Sigma_i)$ for all $i\in [s]$. Since $\beta_d(\Sigma_0') = \beta_d(\Sigma_0 * \langle u_0\rangle) = 0$, it then follows that
\begin{equation}\label{eqn:BettiSigma}
\beta_d(\Sigma) = \beta_d(\Sigma_s') = \sum_{i\in [s]} \beta_{d-1}(\Sigma_i).
\end{equation}

Now, from \eqref{eqn:TopFSigmaDelta}, \eqref{eqn:BettiDelta} and \eqref{eqn:BettiSigma}, we conclude that $\Sigma$ is a $(d+1)$-colorable complex satisfying $f_d(\Sigma) = f_d(\Delta)$ and $\beta_d(\Sigma) \geq \beta_d(\Delta)$. Therefore, by Theorem \ref{thm:RevlexComplexMaximizesBettiNum}, the minimal revlex $(d+1)$-colorable complex $\Gamma$ satisfying $f_d(\Gamma) = f_d(\Delta)$ also satisfies $\beta_d(\Gamma) \geq \beta_d(\Delta)$, which completes the induction step.
\end{proof}

\begin{remark}\label{rem:VertexDecompProof}
Theorem \ref{TopBettiIneqRevlex} can alternatively be proven by showing that $\Sigma$, as constructed in \eqref{eqn:Sigma}, is vertex-decomposable. Recall that a (not necessarily pure) simplicial complex $\Delta'$ is called \textit{vertex-decomposable} if either $\Delta'$ is a simplex (possibly the trivial complex $\{\emptyset\}$); or there exists a vertex $v \in \mathcal{V}(\Delta)$ such that both $\Ast_{\Delta'}(v)$ and $\Lk_{\Delta'}(v)$ are vertex-decomposable, and no facet of $\Lk_{\Delta'}(v)$ is a facet of $\Ast_{\Delta'}(v)$. Such a vertex $v$ (if it exists) is called a \textit{shedding vertex}. First of all, by using Corollary \ref{cor:FacetFreeReductionTopBetti}, we may assume that each $\Sigma_i$ is $([d-2]\cup\{0\})$-facet-free. In particular, $\Sigma_i$ is a pure $(d-1)$-complex if $a_i\geq 1$, and $\Sigma_i = \{\emptyset\}$ is the trivial complex if $a_i = 0$. Let $\Gamma_{s+1} := \Sigma$, and iteratively define $\Gamma_i := \Ast_{\Gamma_{i+1}}(u_i)$ for each $i\in [s]$. Each $\Lk_{\Gamma_{i+1}}(u_i) = \Sigma_i$ is vertex-decomposable, and note also that $(\Sigma_0 * \langle u_0\rangle) \subseteq \Gamma_i$, so none of the faces in $\mathcal{F}_{d-1}(\Sigma_i)$ are facets of $\Gamma_i$, which establishes the vertex-decomposability of $\Sigma$. Next, observe that for any vertex-decomposable simplicial complex $\Delta'$ with shedding vertex $v$, the identity $\beta_k(\Delta') = \beta_k(\Ast_{\Delta'}(v)) + \beta_{k-1}(\Lk_{\Delta'}(v))$ holds for all $k\in \mathbb{N}$; this follows from \cite[Thm. 11.3]{BjornerWachs1997:NonpureShellableII} and \cite[Thm. 4.1]{BjornerWachs1996:NonpureShellableI}. Thus, we can infer \eqref{eqn:BettiSigma} by iteratively applying this identity on $\Sigma$ using the sequence $u_s, u_{s-1}, \dots, u_1$ of shedding vertices.
\end{remark}

\begin{remark}
A third proof of Theorem \ref{TopBettiIneqRevlex} involves showing that $\Sigma$ is color-shifted. Similarly, as in Remark \ref{rem:VertexDecompProof}, we may assume that each $\Sigma_i$ is $([d-2]\cup\{0\})$-facet-free. Let $\sigma: [s]\to [s]$ be any permutation such that $a_{\sigma(s)} \geq a_{\sigma(s-1)} \geq \dots \geq a_{\sigma(1)}$, and replace $u_i$ in $U_{d+1}$ by $u_{\sigma(i)}$ for all $i\in [s]$. Then $\Sigma$ is color-shifted with respect to the resulting new ordered partition $(U_1, \dots, U_d, U_{d+1})$, thus we can infer \eqref{eqn:BettiSigma} from \eqref{eqn:TopBettiNumber}.
\end{remark}

\begin{proof}[\normalfont\textbf{Proof of Theorem \ref{thm:FrohmaderWithHomology}.}]
By Theorem \ref{TopBettiIneqRevlex}, the minimal revlex balanced $d$-complex $\Sigma$ that satisfies $f_d(\Sigma) = f_d(\Delta)$ must satisfy $\beta_d(\Sigma) \geq \beta_d(\Delta)$. By Frohmader's theorem (Theorem \ref{thm:Frohmader}), there exists a (unique) revlex balanced complex $\Gamma$ with the same $f$-vector as $\Delta$.
Then $\Sigma\subseteq \Gamma$ and $\mathcal{F}_d(\Gamma) = \mathcal{F}_d(\Sigma)$, thus by Corollary \ref{cor:SameTopDimFacesSameTopBetti}, we conclude that $\beta_d(\Gamma) = \beta_d(\Sigma) \geq \beta_d(\Delta)$.
\end{proof}

\begin{proof}[\normalfont\textbf{Proof of Theorem \ref{thm:FlagTopBettiUpperBound}.}]
By applying Theorem \ref{thm:FrohmaderWithHomology}, we infer there is a revlex balanced complex $\Sigma$ that satisfies $f(\Sigma) = f(\Delta)$ and $\beta_{d-1}(\Sigma) \geq \beta_{d-1}(\Delta)$. So by Frankl--F\"{u}redi--Kalai's theorem (Theorem \ref{thm:FFK}) and Lemma \ref{lem:iteratedDifferentialOperatorSecond},
\[f_{d-1}(\Delta) \leq \binom{N_{d}}{d}_{\!d} + \binom{N_{d-1}}{d-1}_{\!d-1} + \dots + \binom{N_{d-s}}{d-s}_{\!d-s}.\]
The assertion then follows from Theorem \ref{thm:RevlexComplexMaximizesBettiNum}.
\end{proof}

\section{Lower bounds on face numbers}\label{sec:LowerBoundsFaceNum}

\begin{proof}[\normalfont\textbf{Proof of Theorem \ref{thm:FlagLowerBoundfVecGivenTopBetti}.}]
First of all, to prove the first assertion, it suffices to show \eqref{eqn:fVecFlagLowerBound} for $i=d$, since by Frohmader's theorem (Theorem \ref{thm:Frohmader}), Frankl--F\"{u}redi--Kalai's theorem (Theorem \ref{thm:FFK}) and Lemma \ref{lem:iteratedDifferentialOperator}, we would then get \eqref{eqn:fVecFlagLowerBound} for all $i\geq 0$. Let
\[f_{d-1}(\Delta) = \binom{a_d'}{d}_{\!d} + \binom{a_{d-1}'}{d-1}_{\!d-1} + \dots + \binom{a_m'}{m}_{\!m}\]
be the $(d,d)$-canonical representation of $f_{d-1}(\Delta)$. By Theorem \ref{thm:FlagTopBettiUpperBound}, we get
\begin{equation}\label{eqn:TopBettiFlagUpperBoundNearCanonicalRep}
\beta_{d-1}(\Delta) \leq \binom{a_d'-d}{d}_{\!d} + \binom{a_{d-1}'-(d-1)}{d-1}_{\!d-1} + \dots + \binom{a_m'-m}{m}_{\!m}.
\end{equation}
By definition, the $(d,d)$-canonical representation of the value on the right-hand side of \eqref{eqn:TopBettiFlagUpperBoundNearCanonicalRep} equals
\[\binom{a_d'-d}{d}_{\!d} + \binom{a_{d-1}'-(d-1)}{d-1}_{\!d-1} + \dots + \binom{a_{m_0}'-m_0}{m_0}_{\!m_0},\]
where $m_0 := \min \{t: m\leq t\leq d, a_t'-t\geq t\}$. In particular, note that if $a'_t - t\geq t$ (for some $m\leq t < d$), then it follows from the defining inequality $a'_{t+1} - \lfloor \tfrac{a'_{t+1}}{t+1}\rfloor \geq a'_t+1$ that $a'_{t+1}-(t+1) \geq t+1$.

Now, since $\beta_{d-1}(\Delta) = a$ by assumption, it then follows from \eqref{eqn:TopBettiFlagCanonicalRep} that
\[\binom{a_d}{d}_{\!d} + \dots + \binom{a_{d-s}}{d-s}_{\!d-s} \leq \binom{a_d'-d}{d}_{\!d} + \dots + \binom{a_{m_0}'-m_0}{m_0}_{\!m_0}.\]
This implies, using Lemma \ref{lem:CanonRep+d}, that
\[\binom{a_d+d}{d}_{\!d} + \dots + \binom{a_{d-s}+d-s}{d-s}_{\!d-s} \leq \binom{a_d'}{d}_{\!d} + \dots + \binom{a_{m_0}'}{m_0}_{\!m_0} \leq f_{d-1}(\Delta),\]
as desired.

To prove the second assertion, suppose that equality holds in \eqref{eqn:fVecFlagLowerBound} for some $i=k$ satisfying $k\geq s+1$. Then by definition, the $(k,d)$-canonical representation of $f_{k-1}(\Delta)$ equals
\[f_{k-1}(\Delta) = \binom{a_d+d}{k}_{\!d} + \binom{a_{d-1}+d-1}{k-1}_{\!d-1} + \dots + \binom{a_{d-s}+d-s}{k-s}_{\!d-s}.\]
Thus by Theorems~\ref{thm:Frohmader} and~\ref{thm:FFK}, and Lemma \ref{lem:iteratedDifferentialOperatorSecond}, we would get
\[f_{i-1}(\Delta) \leq \binom{a_d+d}{i}_{\!d} + \binom{a_{d-1}+d-1}{i-1}_{\!d-1} + \dots + \binom{a_{d-s}+d-s}{i-s}_{\!d-s}\]
for all $i\geq k$. Consequently, the second assertion follows from the first assertion.
\end{proof}

The following balanced analog of Theorem \ref{thm:FlagLowerBoundfVecGivenTopBetti} holds as well.

\begin{corollary}\label{cor:BalancedfVecLowerBound}
Let $\Delta$ be a balanced $(d-1)$-complex with $\beta_{d-1}(\Delta) = a > 0$. If
\begin{equation}\label{eqn:TopBettiBalancedCanonicalRep}
a = \binom{a_d}{d}_{\!d} + \binom{a_{d-1}}{d-1}_{\!d-1} + \dots + \binom{a_{d-s}}{d-s}_{\!d-s}
\end{equation}
is the $(d,d)$-canonical representation of $a$, then
\begin{equation}\label{eqn:fVecBalancedLowerBound}
f_{i-1}(\Delta) \geq \binom{a_d+d}{i}_{\!d} + \binom{a_{d-1}+d-1}{i-1}_{\!d-1} + \dots + \binom{a_{d-s}+d-s}{i-s}_{\!d-s}
\end{equation}
for all $i\geq 0$. Furthermore, if equality holds in \eqref{eqn:fVecBalancedLowerBound} for some $i=k$ satisfying $k\geq s+1$, then equality must hold in \eqref{eqn:fVecBalancedLowerBound} for all $i\geq k$.
\end{corollary}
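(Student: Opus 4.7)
The strategy is to mirror the proof of Theorem~\ref{thm:FlagLowerBoundfVecGivenTopBetti}, using the balanced hypothesis on $\Delta$ to bypass every invocation of Frohmader's theorem.

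The main step is the $i = d$ case of \eqref{eqn:fVecBalancedLowerBound}. Let
\[f_{d-1}(\Delta) = \binom{a_d'}{d}_{\!d} + \binom{a_{d-1}'}{d-1}_{\!d-1} + \dots + \binom{a_m'}{m}_{\!m}\]
be the $(d,d)$-canonical representation of $f_{d-1}(\Delta)$. Since $\Delta$ is itself balanced, Theorem~\ref{thm:RevlexComplexMaximizesBettiNum} applies directly to $\Delta$ (no passage through Theorem~\ref{thm:FlagTopBettiUpperBound}, and hence no use of Frohmader, is needed), giving $a = \beta_{d-1}(\Delta) \leq \bigl(f_{d-1}(\Delta)\bigr)_{-}^{(d,d)}$ in the notation of \eqref{eqn:N-CanonRep}. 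By Lemma~\ref{lem:CanonRep+d}\ref{item:CanonicalRepCompare}, applied to the $(d,d)$-canonical representations \eqref{eqn:TopBettiBalancedCanonicalRep} and the one displayed above, this is equivalent to $a_{+}^{(d,d)} \leq f_{d-1}(\Delta)$, and expanding $a_{+}^{(d,d)}$ via \eqref{eqn:N+CanonRep} yields exactly the right-hand side of \eqref{eqn:fVecBalancedLowerBound} at $i = d$.

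To propagate the bound to $i < d$, note that $\Delta$ is $d$-colorable, so part (iii) of Theorem~\ref{thm:FFK} applies directly to $\Delta$: $f_{k-1}(\Delta) \geq \partial_{k+1}^{(d)}(f_k(\Delta))$ for every $k \in [d-1]$. Iterating this inequality, combined with the monotonicity of $\partial_{k+1}^{(d)}$ (immediate from the revlex characterization in part (ii) of Theorem~\ref{thm:FFK}, since a revlex $d$-colorable complex with more top-dimensional faces contains one with fewer as a subcomplex) and with Lemma~\ref{lem:iteratedDifferentialOperator} to compute the action of iterated $\partial$ on the canonical representation of $a_{+}^{(d,d)}$, descends from the $i = d$ case to every smaller $i \geq 1$; the case $i = 0$ is trivial as both sides equal $1$.

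For the second assertion, if equality holds in \eqref{eqn:fVecBalancedLowerBound} at some $i = k \geq s+1$, then by construction the right-hand side of \eqref{eqn:fVecBalancedLowerBound} at $i = k$ is the $(k,d)$-canonical representation of $f_{k-1}(\Delta)$. Applying Theorem~\ref{thm:FFK} directly to $\Delta$ together with Lemma~\ref{lem:iteratedDifferentialOperatorSecond} then yields the reverse inequality in \eqref{eqn:fVecBalancedLowerBound} for every $i \geq k$, forcing equality throughout. I expect no genuine combinatorial obstacle here: the only work is the bookkeeping of confirming that each invocation of Frohmader's theorem in the flag case of Theorem~\ref{thm:FlagLowerBoundfVecGivenTopBetti} can be cleanly discarded, since the balanced hypothesis supplies a $d$-coloring of $\Delta$ from the outset.
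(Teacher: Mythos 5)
Your proposal is correct and takes essentially the same approach as the paper: the paper's own proof simply states that the proof of Theorem~\ref{thm:FlagLowerBoundfVecGivenTopBetti} applies verbatim once Frohmader's theorem is dropped and Theorem~\ref{thm:FlagTopBettiUpperBound} is replaced by a direct application of Theorem~\ref{thm:RevlexComplexMaximizesBettiNum}, and you have carried out precisely those substitutions, filling in the routine details (the use of Lemma~\ref{lem:CanonRep+d}\ref{item:CanonicalRepCompare} and the monotonicity of the $\partial$ operators) correctly.
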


\begin{proof}
The proof of Theorem \ref{thm:FlagLowerBoundfVecGivenTopBetti} applies almost verbatim. The only two minor differences are that Frohmader's theorem (Theorem \ref{thm:Frohmader}) is not needed, and that the application of Theorem \ref{thm:FlagTopBettiUpperBound} should be replaced by an application of Theorem \ref{thm:RevlexComplexMaximizesBettiNum}.
\end{proof}

\begin{remark}
The inequality in \eqref{eqn:fVecBalancedLowerBound} is tight. This is a consequence of Theorem~\ref{thm:FFK} and Corollary~\ref{cor:NonpureColorShiftedBalancedTopBetti}: For every $a>0$ that satisfies \eqref{eqn:TopBettiBalancedCanonicalRep}, there exists a revlex balanced $(d-1)$-complex such that equality holds in \eqref{eqn:fVecBalancedLowerBound} for all $i\geq 0$.
\end{remark}

\begin{proof}[\normalfont\textbf{Proof of Corollary \ref{cor:FlagLowerBoundfVecTuranGivenTopBetti}.}]
First of all, for any $n\geq d$, Theorem \ref{thm:PureColorShiftedBalancedBetti} gives $\beta_{d-1}(\Delta(T_d(n))) = \binom{n-d}{d}_{\!d}$, so if the $(d,d)$-canonical representation of $a$ is given by \eqref{eqn:TopBettiFlagCanonicalRep}, then $\beta_{d-1}(T)\leq a$ implies $\beta_{d-1}(T) \leq \binom{a_d}{d}_{\!d}$, which forces $f_{i-1}(T)\leq \binom{a_d+d}{i}_{\!d}$ for all $i\geq 0$. Thus by Theorem \ref{thm:FlagLowerBoundfVecGivenTopBetti}, $f_{i-1}(\Delta)\geq f_{i-1}(T)$ for all $i\geq 0$.

Now suppose further that $\beta_{d-1}(T) = a$. This implies that $a = \binom{a_d}{d}_{\!d}$ and $f_{i-1}(T)= \binom{a_d+d}{i}_{\!d}$ for all $i\geq 0$. Consequently, if $f_k(\Delta) = f_k(T)$ for some $k\geq 0$, then Theorem \ref{thm:FlagLowerBoundfVecGivenTopBetti} also gives $f_i(\Delta) = f_i(T)$ for all $i\geq k$. In particular, by Zykov's theorem (Theorem \ref{thm:ZykovThm}), $f_0(\Delta) = f_0(T)$ if and only if $f(\Delta) = f(T)$, if and only if $\Delta \cong T$.
\end{proof}

Theorem \ref{thm:FFK} has the following continuous analog.
\begin{theorem}[{\cite[Thm. 5.1]{FFK1988}}]\label{thm:FFKcontinuous}
Let $1\leq k\leq r$ be integers, and let $\Delta$ be an $r$-colorable complex. If $\alpha \geq 0$ is the unique real number that satisfies $\binom{r}{k}\alpha^k = f_{k-1}(\Delta)$, then $f_{j-1}(\Delta) \geq \binom{r}{j}\alpha^j$ for all $j\in [k]$.
\end{theorem}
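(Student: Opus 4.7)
The plan is to combine the discrete Frankl--F\"{u}redi--Kalai inequalities (Theorem \ref{thm:FFK}) with Maclaurin's classical inequality on elementary symmetric polynomials, reducing the theorem to a pure numerical inequality about $(k,r)$-canonical representations. Let $N := f_{k-1}(\Delta)$, and write its $(k,r)$-canonical representation $N = \sum_{i=0}^s \binom{N_{k-i}}{k-i}_{\!r-i}$. Iterating Theorem \ref{thm:FFK}(iii) down from level $k-1$ to level $j-1$, i.e.\ applying $f_{m-2}(\Delta) \geq \partial_{m}^{(r)}(f_{m-1}(\Delta))$ successively for $m=k,k-1,\dots,j+1$, and invoking Lemma \ref{lem:iteratedDifferentialOperator}, I obtain
\[
f_{j-1}(\Delta)\ \geq\ \sum_{i=0}^s \binom{N_{k-i}}{j-i}_{\!r-i}
\]
(with the convention $\binom{n}{m}_\rho = 0$ for $m<0$). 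It therefore suffices to establish the pure inequality
\[
\sum_{i=0}^s \binom{N_{k-i}}{j-i}_{\!r-i}\ \geq\ \binom{r}{j}\alpha^j,
\qquad\text{where}\qquad
\binom{r}{k}\alpha^k = \sum_{i=0}^s\binom{N_{k-i}}{k-i}_{\!r-i}.
\]

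The core of the argument is the single-term case $s=0$, where the desired bound reduces to $\binom{N_k}{j}_{\!r}\geq \binom{r}{j}\alpha^j$ subject to $\binom{N_k}{k}_{\!r}=\binom{r}{k}\alpha^k$. By definition, $\binom{N_k}{m}_{\!r}=e_m(n_1,\dots,n_r)$ is the $m$-th elementary symmetric polynomial evaluated at the color-class sizes $n_1,\dots,n_r$ of the Tur\'{a}n graph $T_r(N_k)$. Maclaurin's inequality for non-negative reals then gives $\bigl(e_j/\binom{r}{j}\bigr)^{1/j}\geq \bigl(e_k/\binom{r}{k}\bigr)^{1/k}=\alpha$, which is exactly the claim.

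For the multi-term case, I proceed by induction on $s$. Split $N=A+B$, where $A=\binom{N_k}{k}_{\!r}$ and $B=\sum_{i=1}^{s}\binom{N_{k-i}}{k-i}_{\!r-i}$ is itself a $(k-1,r-1)$-canonical representation. Define $\delta,\gamma\geq 0$ by $\binom{r}{k}\delta^k=A$ and $\binom{r-1}{k-1}\gamma^{k-1}=B$; applying the base case to $A$ and the inductive hypothesis to $B$ yields $\binom{N_k}{j}_{\!r}\geq \binom{r}{j}\delta^j$ and $\sum_{i=1}^{s}\binom{N_{k-i}}{j-i}_{\!r-i}\geq \binom{r-1}{j-1}\gamma^{j-1}$. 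It then remains to verify the real-variable inequality
\[
\binom{r}{j}\delta^j + \binom{r-1}{j-1}\gamma^{j-1}\ \geq\ \binom{r}{j}\alpha^j
\qquad\text{given}\qquad
\binom{r}{k}(\alpha^k-\delta^k) = \binom{r-1}{k-1}\gamma^{k-1}.
\]
Using concavity of $x\mapsto x^{j/k}$ at the point $\delta^k$, the deficit $\binom{r}{j}(\alpha^j-\delta^j)$ is bounded above by $\tfrac{j}{k}\binom{r}{j}\delta^{j-k}(\alpha^k-\delta^k) = \binom{r-1}{j-1}\delta^{j-k}\gamma^{k-1}$, so the whole problem collapses to showing $\gamma\leq\delta$. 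I expect this last step to be the main obstacle: it is precisely where the gap conditions $N_{k-i}-\lfloor N_{k-i}/(r-i)\rfloor > N_{k-i-1}$ of the canonical representation become essential, forcing $N_{k-1}/(r-1)<N_k/r$ and thereby ensuring that the leading Tur\'{a}n block strictly dominates the remainder. Careful Maclaurin-style bookkeeping on $T_r(N_k)$ (lower-bounding $\delta$) together with the truncation bound $B<\binom{N_{k-1}+1}{k-1}_{\!r-1}$ from Lemma \ref{lem:CanonRep+d}\ref{item:CanonicalRepTruncate} (upper-bounding $\gamma$) should deliver $\gamma\leq\delta$, completing the induction.
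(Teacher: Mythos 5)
Your first reduction is sound: iterating Theorem \ref{thm:FFK}(iii) together with Lemma \ref{lem:iteratedDifferentialOperator} does give $f_{j-1}(\Delta)\ge \sum_{i=0}^s\binom{N_{k-i}}{j-i}_{\!r-i}$, and the single-term case $s=0$ via Maclaurin applied to the color-class sizes of $T_r(N_k)$ is exactly right (this is the heart of the original FFK argument, which the paper cites rather than reproves).

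The inductive step, however, has a genuine gap, and your worry at the end is justified: the claim $\gamma\le\delta$ is \emph{false}. Take $k=r=3$ and $N=26$. Its $(3,3)$-canonical representation is $26=\binom{8}{3}_{\!3}+\binom{5}{2}_{\!2}+\binom{2}{1}_{\!1}=18+6+2$, so $A=18$, $B=8$, $\delta=18^{1/3}$, $\gamma=8^{1/2}$, and $\gamma^6=512>324=\delta^6$, i.e.\ $\gamma>\delta$. Worse, the purely real-variable inequality to which you reduce also fails here: for $j=2$ we need $\binom{3}{2}\delta^2+\binom{2}{1}\gamma\ge\binom{3}{2}\alpha^2$ with $\alpha^3=26$, but $3\cdot 18^{2/3}+2\sqrt{8}\approx 26.26$ while $3\cdot 26^{2/3}\approx 26.33$. (The theorem itself is still true for this example: the discrete sum is $\binom{8}{2}_{\!3}+\binom{5}{1}_{\!2}+\binom{2}{0}_{\!1}=21+5+1=27\ge 26.33$; the problem is that passing from the discrete terms to the ``smoothed'' quantities $\binom{r}{j}\delta^j$ and $\binom{r-1}{j-1}\gamma^{j-1}$ via the base case and the inductive hypothesis loses too much.) So the ``careful bookkeeping'' you hope for cannot deliver $\gamma\le\delta$, and even sidestepping that, the tangent-line estimate at $\delta^k$ is the best concavity alone can give, and it is not enough. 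You need an argument that keeps the whole multiset of color-class sizes across all blocks in play simultaneously (as FFK do), rather than splitting off the leading block and comparing two scalar means.
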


\begin{proof}[\normalfont\textbf{Proof of Theorem \ref{thm:FlagLowerBoundfPolyGivenTopBetti}.}]
First of all, the case $a= 0$ is trivially true, since $\dim \Delta = d-1$ implies that $\Delta$ has at least one $(d-1)$-face $F_0$, which yields $f_{k-1}(\Delta) \geq f_{k-1}(\langle F_0\rangle) = \binom{d}{k}$ for all $k\in [d]$, with equality holding for all $k\in [d]$ if and only if $\Delta \cong \Delta(T_d(d))$ is a $(d-1)$-simplex. Henceforth, assume that $a\geq 1$, let $\alpha = \sqrt[d]{a}$, and let
\[a = \binom{a_d}{d}_{\!d} + \binom{a_{d-1}}{d-1}_{\!d-1} + \dots + \binom{a_{d-s}}{d-s}_{\!d-s}\]
be the $(d,d)$-canonical representation of $a$. Suppose that $\Gamma$ is a $d$-colorable complex such that $f_{d-1}(\Gamma) = a$. By Theorem \ref{thm:FFK}, Lemma \ref{lem:iteratedDifferentialOperator}, and Theorem \ref{thm:FFKcontinuous}, we infer that
\begin{equation}\label{eqn:fVecLowerBoundContinuous}
f_{k-1}(\Gamma) \geq \binom{a_d}{k}_{\!d} + \binom{a_{d-1}}{k-1}_{\!d-1} + \dots + \binom{a_{d-s}}{k-s}_{\!d-s} \geq \binom{d}{k}\alpha^k
\end{equation}
for all $0\leq k\leq d$. Consequently, by summing \eqref{eqn:fVecLowerBoundContinuous} over all $0\leq k\leq d$, it then follows from Lemma~\ref{prop:SumTuranCoefficients} that
\[\binom{a_d+d}{d}_{\!d} + \binom{a_{d-1}+d-1}{d-1}_{\!d-1} + \dots + \binom{a_{d-s}+d-s}{d-s}_{\!d-s} \geq \sum_{k=0}^d \binom{d}{k}\alpha^k = (1+\alpha)^d.\]

Now, Theorem \ref{thm:FlagLowerBoundfVecGivenTopBetti} yields
\begin{equation}
f_{d-1}(\Delta) \geq \binom{a_d+d}{d}_{\!d} + \binom{a_{d-1}+d-1}{d-1}_{\!d-1} + \dots + \binom{a_{d-s}+d-s}{d-s}_{\!d-s},
\end{equation}
hence $f_{d-1}(\Delta) \geq (1+\alpha)^d$. Note that Theorem \ref{thm:Frohmader} says $f(\Delta)$ is the $f$-vector of a $d$-colorable complex. Thus by Theorem \ref{thm:FFKcontinuous}, $f_{j-1}(\Delta) \geq \binom{d}{j}(1+\alpha)^j$ for all $j\in [d]$, and the first assertion follows from the binomial theorem.

Finally, if $\alpha$ is an integer, then the Tur\'{a}n complex $\Delta(T_d(d\alpha +d))$ has $(d-1)$-th reduced Betti number $\alpha^d = a$ (say by Theorem \ref{thm:PureColorShiftedBalancedBetti} or K\"{u}nneth's formula for joins), and $f$-polynomial $(1+(\alpha + 1)x)^d$, so the second assertion follows from Zykov's theorem (Theorem \ref{thm:ZykovThm}).
\end{proof}

Analogous to Corollary \ref{cor:BalancedfVecLowerBound}, the following result also follows from the proof of Theorem \ref{thm:FlagLowerBoundfPolyGivenTopBetti}.
\begin{corollary}\label{cor:BalancedfPolyLowerBound}
Let $\Delta$ be a balanced $(d-1)$-complex with $\beta_{d-1}(\Delta) = a \geq 0$. Then the $f$-polynomial of $\Delta$ satisfies
$f_{\Delta}(x) \geq (1 + (\sqrt[d]{a}+1)x)^d$. Furthermore, if $\sqrt[d]{a}$ is an integer, then this inequality is tight, with equality holding if and only if $\Delta \cong \Delta(T_d(d(\sqrt[d]{a}+1)))$.
\end{corollary}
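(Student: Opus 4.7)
The plan is to mirror the proof of Theorem \ref{thm:FlagLowerBoundfPolyGivenTopBetti} almost verbatim, substituting the balanced analog Corollary \ref{cor:BalancedfVecLowerBound} for Theorem \ref{thm:FlagLowerBoundfVecGivenTopBetti}, and observing that since $\Delta$ is already balanced (hence $d$-colorable), Frohmader's theorem is not needed to extract a $d$-colorable complex with the same $f$-vector.

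For $a=0$, the claimed bound reduces to $f_{k-1}(\Delta) \geq \binom{d}{k}$, which holds because $\dim \Delta = d-1$ forces a $(d-1)$-simplex inside $\Delta$; equality throughout forces $\Delta$ to be a single $(d-1)$-simplex, which (being balanced) is precisely $\Delta(T_d(d))$. For $a \geq 1$, set $\alpha := \sqrt[d]{a}$ and write the $(d,d)$-canonical representation of $a$ as in \eqref{eqn:TopBettiBalancedCanonicalRep}. I would first invoke Corollary \ref{cor:BalancedfVecLowerBound} to obtain the canonical-representation lower bound on $f_{d-1}(\Delta)$. Applying the Frankl--F\"uredi--Kalai continuous inequality (Theorem \ref{thm:FFKcontinuous}) to a $d$-colorable complex witnessing the canonical summands, and then summing the resulting face-number inequalities via Lemma \ref{prop:SumTuranCoefficients} — exactly as in the proof of Theorem \ref{thm:FlagLowerBoundfPolyGivenTopBetti} — converts this into $f_{d-1}(\Delta) \geq (1+\alpha)^d$. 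Because $\Delta$ is balanced, Theorem \ref{thm:FFKcontinuous} applies directly to $\Delta$ to yield $f_{j-1}(\Delta) \geq \binom{d}{j}(1+\alpha)^j$ for all $j \in [d]$, and the binomial theorem assembles these into the required coefficient-wise inequality $f_{\Delta}(x) \geq (1+(\alpha+1)x)^d$.

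For tightness when $\alpha \in \mathbb{N}$, the Tur\'an complex $\Delta(T_d(d(\alpha+1)))$ realises equality: Theorem \ref{thm:PureColorShiftedBalancedBetti} (or a K\"unneth computation on the join decomposition) gives $\beta_{d-1}(\Delta(T_d(d(\alpha+1))))=\alpha^d=a$, and its $f$-polynomial factors as $(1+(\alpha+1)x)^d$. For the converse uniqueness direction I cannot cite Zykov's theorem as was done in the flag case, so the plan is the following elementary transversal argument. Coefficient-wise equality in the $f$-polynomial forces both $f_0(\Delta)=d(\alpha+1)$ and $f_{d-1}(\Delta)=(\alpha+1)^d$. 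Fix a balancing partition $(U_1,\dots,U_d)$ and let $n_i$ denote the number of vertices of $\Delta$ in $U_i$. Every $(d-1)$-face of $\Delta$ is a transversal of the partition, so $f_{d-1}(\Delta) \leq \prod_i n_i$; combined with $\sum_i n_i = d(\alpha+1)$, the AM--GM inequality together with $\prod_i n_i \geq (\alpha+1)^d$ forces $n_i=\alpha+1$ for every $i$ and forces every transversal to be a face of $\Delta$. Closure under inclusion then promotes every partial transversal to a face, giving $\Delta = \Delta(T_d(d(\alpha+1)))$ on the nose.

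The main obstacle is only this last uniqueness step, since Zykov's theorem is unavailable without the flag hypothesis; everything else is a mechanical transcription of the proof of Theorem \ref{thm:FlagLowerBoundfPolyGivenTopBetti}, with Corollary \ref{cor:BalancedfVecLowerBound} in place of Theorem \ref{thm:FlagLowerBoundfVecGivenTopBetti} and with the $d$-colorability of $\Delta$ built in from the outset.
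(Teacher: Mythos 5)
Your proposal is correct, and you have spotted a genuine issue with the ``analogous'' route the paper gestures at. The inequality part is indeed a mechanical transcription of the proof of Theorem \ref{thm:FlagLowerBoundfPolyGivenTopBetti}: replace Theorem \ref{thm:FlagLowerBoundfVecGivenTopBetti} by Corollary \ref{cor:BalancedfVecLowerBound}, and drop Frohmader's theorem because $\Delta$ is already $d$-colorable, so Theorem \ref{thm:FFKcontinuous} applies directly. That all checks out.

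Where you deviate, and where the deviation is necessary, is the uniqueness direction. The paper's proof of Theorem \ref{thm:FlagLowerBoundfPolyGivenTopBetti} settles uniqueness by citing Zykov's theorem, but Theorem \ref{thm:ZykovThm} is stated only for flag complexes, and a balanced $(d-1)$-complex need not be flag (e.g.\ take the octahedron's $1$-skeleton together with one pair of opposite facets: it is $3$-colorable but has a $3$-cycle spanning no $2$-face). So the paper's invitation to ``follow the proof analogously'' leaves a gap at this step for the balanced version. Your replacement argument is correct and elementary: coefficient-wise equality forces $f_0(\Delta)=d(\alpha+1)$ and $f_{d-1}(\Delta)=(\alpha+1)^d$; fixing a balancing partition with color-class sizes $n_1,\dots,n_d$ (restricted to $\mathcal V(\Delta)$), every $(d-1)$-face is a transversal, so $(\alpha+1)^d = f_{d-1}(\Delta)\le \prod_i n_i \le \bigl(\tfrac{1}{d}\sum_i n_i\bigr)^d=(\alpha+1)^d$ by AM--GM; equality forces $n_i=\alpha+1$ for all $i$ and forces every full transversal to be a face, whence every partial transversal is a face by downward closure, so $\Delta=\Delta(T_d(d(\alpha+1)))$. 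This buys you an argument that is both shorter than Zykov's and actually valid in the balanced setting, and it would equally well prove the uniqueness step of Theorem \ref{thm:FlagLowerBoundfPolyGivenTopBetti} itself. One small thing worth saying explicitly in a final write-up: in the $a=0$ base case, once you know $\Delta$ has exactly $d$ vertices and a $(d-1)$-face, that face must be the whole vertex set, so $\Delta$ is that one simplex; your sketch says this but the one-line justification is worth including.
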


\section{Concluding Remarks}\label{sec:FurtherRemarks}
Given any simplicial complex $\Delta$, its \textit{reduced Betti vector} is $\beta(\Delta) := (\beta_{-1}(\Delta), \beta_0(\Delta), \dots, \beta_{\dim\Delta}(\Delta))$, and its \textit{$(f,\beta)$-vector} is the pair $(f(\Delta), \beta(\Delta))$. Although the $(f,\beta)$-vectors of simplicial complexes are characterized by Bj\"{o}rner--Kalai~\cite{BjornerKalai1988:ExtendedEulerPoincareThm}, there is no known characterization of the possible $(f,\beta)$-vectors for the subfamily of balanced complexes; perhaps the combinatorial techniques by
Duval~\cite{Duval1999:RelativeHomology} and Bj\"{o}rner--Kalai~\cite{BjornerKalai1991:ExtendedEulerPoincareCellComplexes}
are helpful for this subfamily. In view of Theorem~\ref{thm:FrohmaderWithHomology}, we raise the following questions:
\begin{problem}
Is there a flag complex $\Delta$ such that $(f(\Delta),\beta(\Delta))$ is not the the $(f,\beta)$-vector of any balanced complex? Perhaps this is true even when restricted to $(f(\Delta),\beta_{\dim \Delta}(\Delta))$?
\end{problem}

In view of Theorem~\ref{thm:FlagLowerBoundfVecGivenTopBetti}, the following conjectures provide quantitative refinements of Meshulam's theorem~\cite[Thm. 1.1]{Meshulam2003:DominationHomology} in \emph{all} dimensions.
\begin{conjecture}\label{conj:6.2}
Let $\Delta$ be a flag complex with $\beta_{k-1}(\Delta) = a > 0$. If
\begin{equation*}
a = \binom{a_k}{k}_{\!k} + \binom{a_{k-1}}{k-1}_{\!k-1} + \dots + \binom{a_{k-s}}{k-s}_{\!k-s}
\end{equation*}
is the $(k,k)$-canonical representation of $a$, then
\begin{equation*}
f_{i-1}(\Delta) \geq \binom{a_k+k}{i}_{\!k} + \binom{a_{k-1}+k-1}{i-1}_{\!k-1} + \dots + \binom{a_{k-s}+k-s}{i-s}_{\!k-s}
\end{equation*}
for all $i\geq 0$.
\end{conjecture}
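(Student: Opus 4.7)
The plan is to mirror the proof of Theorem \ref{thm:FlagLowerBoundfVecGivenTopBetti}, which is precisely the case $k = \dim\Delta + 1$ of the conjecture. That proof writes $f_{d-1}(\Delta)$ in $(d,d)$-canonical form, applies Theorem \ref{thm:FlagTopBettiUpperBound} to bound $\beta_{d-1}(\Delta)$ from above, inverts the result via Lemma \ref{lem:CanonRep+d} using $\beta_{d-1}(\Delta) = a$ to obtain the lower bound on $f_{d-1}(\Delta)$, and propagates it down to lower $f_{i-1}$ by Frohmader's theorem (Theorem \ref{thm:Frohmader}), Frankl--F\"{u}redi--Kalai (Theorem \ref{thm:FFK}), and Lemma \ref{lem:iteratedDifferentialOperator}. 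The inversion and propagation steps are formal and go through for every $k$; the missing ingredient is the following analog of Theorem \ref{thm:FlagTopBettiUpperBound}: \emph{for every flag complex $\Delta$ with $f_{k-1}(\Delta) = N > 0$ of $(k,k)$-canonical representation $N = \sum_{j=0}^{s} \binom{N_{k-j}}{k-j}_{\!k-j}$,
\[\beta_{k-1}(\Delta) \;\leq\; \sum_{j=0}^{s}\binom{N_{k-j}-(k-j)}{k-j}_{\!k-j}.\]}
Proving this inequality would imply the conjecture verbatim.

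To establish this upper bound I would attempt a double induction, outer on $k$ and inner on $\dim\Delta$, emulating the Mayer--Vietoris argument of Theorem \ref{TopBettiIneqRevlex}. Pick a vertex $v_0$ of maximal $(k-1)$-degree. By flagness, $\overline{\St}_\Delta(v_0)$ is an induced subcomplex and is contractible; adding the remaining vertices back one at a time as in Theorem \ref{TopBettiIneqRevlex} and iterating Lemma \ref{lem:standardMayerViet} (which holds in every homological degree) yields
\[\beta_{k-1}(\Delta) \;\leq\; \sum_{i\in[s]} \beta_{k-2}(\Lk_{\Delta_{i+1}}(v_i)).\]
Each link is a flag complex of strictly smaller dimension, so the outer induction hypothesis (applied at $k-1$) supplies, for each link, a revlex $(k-1)$-colorable complex $\Sigma_i$ with the same $f_{k-2}$ and at least as large $\beta_{k-2}$. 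One would then glue the $\Sigma_i$'s into a balanced $k$-colorable complex $\Sigma$ via a construction modelled on $\Sigma = \bigcup_{i}(\Sigma_i * \langle u_i\rangle)$, arrange that $f_{k-1}(\Sigma) = f_{k-1}(\Delta)$ and $\beta_{k-1}(\Sigma) \geq \beta_{k-1}(\Delta)$, and finally invoke a non-top analog of Theorem \ref{thm:RevlexComplexMaximizesBettiNum} to convert this into the claimed canonical-representation bound.

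The hard part is twofold. First, unlike the top case, $f_{k-1}(\Delta)$ no longer decomposes cleanly over the links: because $\overline{\St}_\Delta(v_0)$ contributes both $f_{k-1}(\Lk_\Delta(v_0))$ faces and cone faces, the identity $f_d(\Delta) = \sum_{i=0}^s a_i$ used in Theorem \ref{TopBettiIneqRevlex} is replaced by $f_{k-1}(\Delta) = f_{k-1}(\Lk_\Delta(v_0)) + f_{k-2}(\Lk_\Delta(v_0)) + \sum_{i\in[s]} a_i$, so the gluing must be refined to absorb the extra $f_{k-1}(\Lk_\Delta(v_0))$ term without breaking the Betti inequality. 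Second, and more seriously, the required non-top balanced analog of Theorem \ref{thm:RevlexComplexMaximizesBettiNum} is not known: Theorem \ref{thm:PureColorShiftedBalancedBetti} and Corollary \ref{cor:NonpureColorShiftedBalancedTopBetti} give a clean enumeration only for the top Betti number of a color-shifted balanced complex, and the discussion after Theorem \ref{thm:PureColorShiftedBalancedBetti} (invoking Murai's counterexample from \cite{Murai2008:BettiNumbersSquarefreeStronglyColorStable}) shows that the naive extension to lower degrees fails. A plausible alternative route is to sidestep the balanced analog by reducing to the top-dimensional case via the pure $(k-1)$-subcomplex $\Delta^{[k-1]}$ generated by the $(k-1)$-faces of $\Delta$, which preserves $f_{k-1}$ and satisfies $\beta_{k-1}(\Delta^{[k-1]}) \geq \beta_{k-1}(\Delta)$ because $\widetilde{H}_{k-1}(\Delta)$ is a quotient of $\widetilde{H}_{k-1}(\Delta^{[k-1]})$; the snag is that $\Delta^{[k-1]}$ need not be flag, since its underlying graph can carry cliques of size $>k$ inherited from higher-dimensional faces of $\Delta$, so a flagness-preserving refinement---either by edge-pruning or by replacing $\Delta^{[k-1]}$ with a flag subcomplex on the same $(k-1)$-faces---would be needed before Theorem \ref{thm:FlagLowerBoundfVecGivenTopBetti} applies. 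Closing this gap appears to be the principal obstacle.
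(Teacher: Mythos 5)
The statement you have been given is Conjecture~\ref{conj:6.2}. The paper offers no proof of it; it is raised as an open problem in the concluding remarks, so there is no proof of the authors' to compare against. What you have written is not a proof but a reconnaissance of why the machinery of Sections~\ref{sec:HomologyColorShiftedBalanced}--\ref{sec:LowerBoundsFaceNum} does not extend, and your diagnosis is sound.

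You correctly isolate the missing ingredient: a lower-degree analog of Theorem~\ref{thm:FlagTopBettiUpperBound} bounding $\beta_{k-1}(\Delta)$ via the $(k,k)$-canonical representation of $f_{k-1}(\Delta)$, valid for flag complexes of arbitrary dimension $\geq k-1$. Granting that, the inversion via Lemma~\ref{lem:CanonRep+d} and the downward propagation via Theorems~\ref{thm:Frohmader} and~\ref{thm:FFK} and Lemma~\ref{lem:iteratedDifferentialOperator} go through verbatim as in the proof of Theorem~\ref{thm:FlagLowerBoundfVecGivenTopBetti}, and for $i>k$ the conjectured bound is $0$ and holds trivially. The two obstructions you name are genuine. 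First, the Mayer--Vietoris filtration of Theorem~\ref{TopBettiIneqRevlex} no longer cleanly distributes $f_{k-1}$ over the links once $k-1<\dim\Delta$, because $\overline{\St}_{\Delta}(v_0)$ itself carries $(k-1)$-faces; your corrected count $f_{k-1}(\Delta) = f_{k-1}(\Lk_{\Delta}(v_0)) + f_{k-2}(\Lk_{\Delta}(v_0)) + \sum_{i\in[s]} a_i$ is right. Second, and more fundamentally, Theorem~\ref{thm:PureColorShiftedBalancedBetti} and Corollary~\ref{cor:NonpureColorShiftedBalancedTopBetti}, and therefore Theorem~\ref{thm:RevlexComplexMaximizesBettiNum}, control only the \emph{top-dimensional} Betti number of a color-shifted balanced complex, and Murai's counterexample rules out the naive extension to lower degrees, so there is no balanced target onto which to push $\beta_{k-1}$. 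Your fallback via the subcomplex generated by the $(k-1)$-faces is also correctly diagnosed as failing: passing to the clique complex of its underlying graph preserves $\mathcal{F}_{k-1}$, keeps the complex flag, and can only increase $\beta_{k-1}$, yet the result may again have dimension exceeding $k-1$, which puts you right back in the non-top situation. In short, you have correctly located the gap that keeps the conjecture open; as you say yourself, you have not closed it, which matches the status of the statement in the paper.
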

Conjecture~\ref{conj:6.2}, if true, would imply the following conjecture, by following the proof of Corollary~\ref{cor:FlagLowerBoundfVecTuranGivenTopBetti} with the obvious changes in notation.
\begin{conjecture}
Let $\Delta$ be a flag complex with $\beta_k(\Delta) = a > 0$. Let $T$ be any $k$-dimensional Tur\'{a}n complex that satisfies $\beta_k(T) \leq a$.
Then $f(\Delta)\geq f(T)$ componentwise.
\end{conjecture}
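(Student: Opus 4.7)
The plan is to argue conditionally on Conjecture~\ref{conj:6.2}, mimicking the proof of Corollary~\ref{cor:FlagLowerBoundfVecTuranGivenTopBetti} with the dimension shifted by one. Set $d = k+1$, so that any $k$-dimensional Tur\'an complex has the form $T = \Delta(T_d(n))$ for some $n \geq d$. The first step is to record that $T$ is a color-shifted balanced $(d-1)$-complex whose $(d-1)$-faces are transversals of the $d$ color classes; Theorem~\ref{thm:PureColorShiftedBalancedBetti} (counting transversals that avoid the first vertex of every class) then gives $\beta_k(T) = \binom{n-d}{d}_{\!d}$, while $f_{i-1}(T) = \binom{n}{i}_{\!d}$ by definition.

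The second step is to let $a = \binom{a_d}{d}_{\!d} + \binom{a_{d-1}}{d-1}_{\!d-1} + \cdots + \binom{a_{d-s}}{d-s}_{\!d-s}$ be the $(d,d)$-canonical representation of $a$. The hypothesis $\beta_k(T) = \binom{n-d}{d}_{\!d} \leq a$, together with the uniqueness of canonical representations recorded in Lemma~\ref{lem:CanonRep+d}, forces $n - d \leq a_d$. Since $n \mapsto \binom{n}{i}_{\!d}$ is weakly increasing, this gives $f_{i-1}(T) \leq \binom{a_d+d}{i}_{\!d}$ for every $i \geq 0$. Invoking Conjecture~\ref{conj:6.2} (with its index replaced by $d$) then supplies the matching lower bound
\[
f_{i-1}(\Delta) \;\geq\; \binom{a_d+d}{i}_{\!d} + \binom{a_{d-1}+d-1}{i-1}_{\!d-1} + \cdots + \binom{a_{d-s}+d-s}{i-s}_{\!d-s} \;\geq\; \binom{a_d+d}{i}_{\!d} \;\geq\; f_{i-1}(T),
\]
which is precisely the desired componentwise inequality.

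The main obstacle is, unavoidably, Conjecture~\ref{conj:6.2} itself; everything above is essentially bookkeeping. The top-dimensional case Theorem~\ref{thm:FlagLowerBoundfVecGivenTopBetti} rests on the upper bound Theorem~\ref{thm:FlagTopBettiUpperBound}, whose proof crucially uses Corollary~\ref{cor:SameTopDimFacesSameTopBetti} to replace $\Delta$ by the pure subcomplex generated by its top faces, reducing to a color-shifted balanced complex via Frohmader's theorem and Theorem~\ref{thm:RevlexComplexMaximizesBettiNum}. None of this machinery is available for intermediate Betti numbers: passing to the subcomplex generated by top-dimensional faces generally alters $\beta_{k-1}$ when $k-1 < \dim\Delta$, and the Mayer--Vietoris bookkeeping driving Theorem~\ref{TopBettiIneqRevlex} is inherently top-dimensional. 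An unconditional proof would presumably need a counterpart of Theorem~\ref{thm:RevlexComplexMaximizesBettiNum} for $\beta_{k-1}$ of not-necessarily-pure revlex balanced complexes (perhaps via a Hochster-formula analysis in the spirit of Murai~\cite{Murai2008:BettiNumbersSquarefreeStronglyColorStable}), together with a flag-complex compression scheme that controls homology in all dimensions simultaneously.
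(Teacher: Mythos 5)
Your conditional argument is correct and is precisely what the paper has in mind: the paper only remarks that this conjecture ``would follow from Conjecture~\ref{conj:6.2} by following the proof of Corollary~\ref{cor:FlagLowerBoundfVecTuranGivenTopBetti} with the obvious changes in notation,'' and you have carried out those changes explicitly (setting $d=k+1$, computing $\beta_k(T)=\binom{n-d}{d}_{\!d}$ via Theorem~\ref{thm:PureColorShiftedBalancedBetti}, deducing $n-d\leq a_d$ from Lemma~\ref{lem:CanonRep+d}, and chaining through Conjecture~\ref{conj:6.2}). The closing paragraph correctly identifies that the genuine difficulty is Conjecture~\ref{conj:6.2} itself and accurately diagnoses why the top-dimensional machinery (Corollary~\ref{cor:SameTopDimFacesSameTopBetti}, Theorem~\ref{thm:RevlexComplexMaximizesBettiNum}, the Mayer--Vietoris argument of Theorem~\ref{TopBettiIneqRevlex}) does not transfer to intermediate Betti numbers.
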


\bibliographystyle{plain}
\bibliography{References}

\end{document}